\newsavebox\ltmcbox
\newcounter{entryno}
\def\tabline{Test & \the\value{entryno} & Description\addtocounter{entryno}{1}\\}
\newtheorem{theorem}{Theorem}[section]
\newtheorem{corollary}{Corollary}[theorem]
\newtheorem{lemma}[theorem]{Lemma}
\newtheorem{definition}[theorem]{Definition}
\newtheorem{proposition}[theorem]{Proposition}
\newtheorem{remark}[theorem]{Remark}
\newtheorem{example}[theorem]{Example}
\newtheorem{notation}[theorem]{Notation}
\newcommand\restr[2]{{% we make the whole thing an ordinary symbol
  \left.\kern-\nulldelimiterspace % automatically resize the bar with \right
  #1 % the function
  \vphantom{\big|} % pretend it's a little taller at normal size
  \right|_{#2} % this is the delimiter
  }}
\newcommand {\co} {\textsf{c}}
\newcommand{\bT}{\mathbf{T}}
\newcommand{\bU}{\mathbf{U}}
\newcommand{\bF}{\mathbf{F}}
\newcommand{\ipl}{\ensuremath{\mathbf{IPL}}}
\newcommand{\sfo}{\textbf{S4}}
\title{Intuitionism with truth-tables: A Decision Procedure for \textbf{IPL} Based on RNmatrices}
\title{A new decision method for Intuitionistic Logic by 3-valued non-deterministic truth-tables (pre-print version)}
\author{
Renato Leme\footnote{\href{mailto:rntreisleme@gmail.com}{rntreisleme@gmail.com}}\hspace{0.2cm}$^{\gamma} $\\ 
Marcelo E. Coniglio\footnote{\href{mailto:coniglio@unicamp.br}{coniglio@unicamp.br}}\hspace{0.2cm}$^{\gamma}$ \\
Bruno Lopes\footnote{\href{mailto:bruno@ic.uff.br}{bruno@ic.uff.br}}\hspace{0.2cm}$^{\delta}$
\\
%-- 
\\ 
$^{\gamma}${\small Institute of Philosophy and the Human Sciences (IFCH) and}\\ 
{\small Centre for Logic, Epistemology and the History of Science (CLE)}\\
{\small State University of Campinas (UNICAMP), Campinas, SP, Brazil}\\ 
$^{\delta}${\small Institute of Computing (IC), Fluminense Federal University (UFF), Niterói, RJ, Brazil} 
}
\date{}
\begin{document}
\sloppy

\maketitle

\begin{abstract}
   Kurt Gödel proved that it is not possible to characterize Intuitionistic Propositional Logic (\ipl) by means of finite and deterministic truth-tables. After extending the same result with respect to non-deterministic matrices, we provide a semantical characterization of \ipl\ by means of a $3$-valued  non-deterministic matrix with a restricted set of valuations. This structure allows to define an algorithm to delete unsound rows from the non-deterministic truth-tables generated for each formula, which constitutes a new and very simple decision procedure for \ipl. This method can be seen as truth-tables in a broader sense, and a way to overcome G\"odel's limiting result.
\end{abstract}

%\tableofcontents
{\bf MSC class:} 03B20; 03B45; 03B25; 03B35; 68V15

\section{Introduction} \label{sect:Intro}

In 1932, Kurt Gödel established the uncharacterizability of intuitionistic propositional logic (\textbf{IPL}) with respect to finite-valued deterministic matrices~\cite{Godel1932}. He demonstrated that any finite deterministic matrix, strongly enough to validate every intuitionistic tautology, will also validate at least one intuitionistically invalid proposition. Later, in $1940$, James Dugundji proved a similarly significant result by adapting Gödel's argument to show that the same holds true for modal logic \textbf{S5} and all of its subsystems~\cite{dugun:40}.

These limitations led to new interpretations for modal and intuitionistic systems, such as Saul Kripke's relational semantics, where the notion of \emph{possible world} plays a central role. The tradition of using possible worlds to explain modalities dates back to Leibniz and is at the core of philosophical debates: Are possible worlds real or just helpful fiction? John Kearns challenged this tradition by defending that they are neither one nor the other~\cite[p. 86]{kearns_modal_1981}. To support his position, he proposed, in $1981$, a four-valued semantics for the modal systems \textbf{KT}, \textbf{S4}, and \textbf{S5} where the truth-value of a formula depends solely on the truth assignments of its subformulas. In Kearns' approach, the meaning of certain connectives is not entirely determined by their matrices, as some inputs can yield more than one output. This type of semantics was later formalized under the notion of {\em non-deterministic matrices} (Nmatrices, in short)~\cite{avr:lev:01}. To address this indeterminacy, instead of possible worlds, Kearns introduced a method for restricting the set of valuations, which was later referred to more generally as {\em level valuations} (see Section~\ref{sec4}). In this approach, not all valuations in the Nmatrix are permitted. Instead, only a subset that meets specific conditions to ensure the validity of the necessitation rule is allowed; this subset is precisely the set of level valuations. Nmatrices with a restricted set of valuations lead to the notion of {\em Restricted non-deterministic matrices} or  {\em Restricted Nmatrices} (RNmatrices, in short), central in the present paper.
% formally introduced in~\cite{coniglio_two_2022}.

Unlike Kripke semantics, however, Kearns's level valuations do not draw a straightforward decision procedure for those logics. Until recently, it was unclear whether obtaining such a decision procedure from level valuations would be possible. 

\begin{quote}
(\ldots) the interesting problem of determining whether or not a decision procedure can be obtained by means of level valuations remains open.~\cite{coniglio_errata_2016}
\end{quote}

In $2021$, Lukas Gr\"atz solved this problem for some modal systems (see~\cite{gratz_truth_2022}). He built upon Kearns's original RNmatrix $3$-valued RNmatrices for \textbf{KT} and \textbf{S4} with two decidable notions of \emph{partial level valuation}, one for \textbf{KT} and other for \textbf{S4} and show that both are analytic and co-analytic regarding the level valuations of the RNmatrix for the respective modal systems. To obtain the truth-table, one discards (by means of  a simple algorithm) any valuation that does not fit the corresponding definition. Ultimately, the remaining partial valuations suffice to decide the formula's validity. Analyticity and co-analyticity ensure that partial level valuation semantics (which constitute the rows of the truth-tables) coincide with the corresponding RNmatrix semantics, which characterizes each logic.

Constructing non-deterministic (i.e., row-branching) truth-tables in a finite-valued Nmatrix can be seen as a generalization of the standard (deterministic) truth-tables method for deciding validity in a given logic. 
Indeed, non-deterministic matrices were formally introduced by A. Avron and I. Lev in~\cite{avr:lev:01} with the aim of overcoming G\"odel-Dugundji's uncharacterizability results for some logics. The main purpose of Nmatrices was to obtain a decision procedure, analogous to finite-valued truth-tables, for logics that do not admit such a characterization.  It is worth noting that Nmatrices were already considered in the literature before these authors. For instance, N. Rescher already defined a non-deterministic matrix semantics in 1962, under the name of {\em quasi-truth-functional systems} (see~\cite{resch:62}). Afterwards, and among other authors,  this notion was independently proposed by Y. Ivlev for studying non-normal modal logics, starting in the 1970s (see for instance~\cite{ivl:73,ivl:86,ivl:88,ivl:24}), as well as J. Kearns (with restricted valuations).

In spite of its greater expressive power with respect to standard (deterministic) matrices, the generalization of the notion of truth-tables by means of Nmatrices has also limitations:  some logics, such as da Costa's paraconsistent logic $C_1$, cannot be characterized by a single finite Nmatrix, as proved by Avron in~\cite[Theorem~11]{avron:2007}.
The same is true for  \textbf{KT} and \textbf{S4}, as proved by Gr\"atz in~\cite{gratz_truth_2022}. Moreover, in Theorem~\ref{unchar-IPL} below we will show that \ipl\ also cannot be characterized by a single finite Nmatrix, extending so G\"odel's theorem from $1932$. These examples show that G\"odel-Dugundji's results hold, for some logics, in a stronger sense, that is,  w.r.t. Nmatrices. The key to overcoming this limitation is to consider {\em some} valuations in a suitable Nmatrix. This kind of semantical structures was formally introduced in~\cite{coniglio_two_2022}, under the name of {\em restricted Nmatrices}, or RNmatrices,  showing that each da Costa's logics $C_n$ (for $n \geq 1$) can be characterized by a single (decidable) $(n+2)$-valued RNmatrix; in particular, validity in $C_1$ can be decided by a $3$-valued RNmatrix by means of branching truth-tables (plus a simple criterion to delete inadequate rows).  Analogously, Gr\"atz decidable $3$-valued RNmatrices produce a decision procedure for \textbf{KT} and \sfo\ by means of $3$-valued branching truth-tables  (together with a simple method to delete unsound rows).

Finite-valued non-deterministic truth-tables, together with an algorithm to delete inadequate rows, is a decision procedure that can be seen as truth-tables in a broader sense, and is a way to overcome G\"odel-Dugundji's limiting results, even for more complicated logics such as  \textbf{KT}, \sfo, $C_1$ and \ipl. The aim of this paper is, precisely, to obtain a new decision procedure for \ipl\ based on this kind of generalized truth-tables.

Recall that, in $1933$, G\"odel proposed an interpretation of \ipl\ into \sfo\ by means of a translation mapping $t$ between their languages~\cite{Godel1933}. In $1948$, Tarski and McKinsey prove that G\"odel's mapping $t$ constitutes a faithful interpretation, that is: $\alpha$ is a theorem of \ipl\ iff its translation $t(\alpha)$ is a theorem of \sfo~\cite{McKinsey:Tarsk:48}. Since G\"odel translation $t$ is computable, then, by composing it with any algorithm for deciding \sfo, a decision procedure for \ipl\ is immediately obtained, defined by using a translation (from \ipl\ to \sfo). In particular, by composing  G\"odel translation $t$ with  Gr\"atz's algorithm for \sfo, a decision procedure is obtained for \ipl, based on an RNmatrix for \sfo\ (as well as the translation $t$). The definition of a  {\em pure} RNmatrix for \ipl\ (i.e., without depending on G\"odel and Gr\"atz's constructions), which is also decidable by truth-tables, as discussed above, is a challenge that naturally arises.

A pure RNmatrix for \ipl\ with such features is desirable for several reasons. First, it can lead to improvements in efficiency. As will become clear later, the computational cost of producing a truth-table is directly related to the number of subformulas, and the translated formulas are twice as large as the original intuitionistic formulas.  The second reason is that a pure RNmatrix can explain the actual meaning of intuitionistic operators. As it will be argued below, the RNmatrix for \ipl\ proposed here has significant differences when compared to the RNmatrix for \sfo, and those differences are motivated by philosophical considerations on the nature of \ipl\ itself.

In this paper, a pure (and decidable by non-deterministic truth-tables) RNmatrix for \ipl\ will be constructed by abstracting the composition of G\"odel's translation algorithm and Gr\"atz's decision procedure for \sfo. This abstraction will provide a direct way to define the RNmatrix for \ipl. Hence, a $3$-valued Nmatrix, together with suitable notions of level valuations and partial level valuations (different, in nature, to the ones for modal logics) is introduced, and the soundness and completeness of the method are proved without relying on the results of G\"odel~\cite{Godel1933} and Gr\"atz~\cite{gratz_truth_2022}. 

The paper is organized as follows. Section~\ref{sec2} introduces the central notions that will be used, such as the notion of non-deterministic matrices, restricted non-deterministic matrices, and their valuation functions. Section~\ref{sec3} extends G\"odel's uncharacterizability theorem for \ipl\ to the non-deterministic case. Section~\ref{sec4} presents Kearns's Nmatrices, and the notion of level valuation for \sfo\ is detailed. Section~\ref{Sect:Gratz} presents Gr\"atz's RNmatrix for \sfo. Section~\ref{Sect:IPLfirstStep} expands the RNmatrix for \sfo\ with conjunction and disjunction and introduces G\"odel's box-translation. Section~\ref{RNmatrix-IPL} introduces the RNmatrix for \ipl\ and proves its soundness and completeness regarding a new notion of level valuation for \ipl. Section~\ref{tables-IPL} presents the truth-table method based on the RNmatrix, and some examples are discussed. Section~\ref{complexity} briefly considers some aspects of the computational complexity of the method. Finally, Section~\ref{Sect:Related} discusses some related works and possibilities of future research.

\section{Preliminaries: Logics, Nmatrices and RNmatrices}\label{sec2}

Along this paper, a {\em propositional signature} is a denumerable family $\Theta=(\Theta_n)_{n \geq 0}$ of pairwise disjoint sets. Elements in $\Theta_n$ are called {\em $n$-ary connectives}. An {\em algebra over $\Theta$} is a pair $\mathcal{A}=\langle A,\mathcal{O}\rangle$ such that  $A$ is a non-empty set called the {\em universe} or {\em domain} of $\mathcal{A}$ and, $\mathcal{O}$ is a function assigning, to each $n$-ary connective $\#$ of $\Theta$, a function $\mathcal{O}(\#) : A^{n} \rightarrow A$.
The (absolutely) free algebra over $\Theta$ generated by a denumerable set $\mathcal{P}=\{p_0,p_1,\ldots\}$ of propositional variables is called {\em the algebra of formulas over $\Theta$}, and will be denoted by $For(\Theta)$. 
%When $|\Theta|:= \bigcup_{n \geq 0} \Theta_n$ is finite then, when there is no risk of confusion about the arity of the connectives,  $\Theta$ will be simply represented by $|\Theta|$.
%A {\em substitution} over a signature $\Theta$ is a function $\sigma:\mathcal{P} \to For(\Theta)$. Given that $For(\Theta)$ is absolutely free, any substitution can be uniquely extended to a homomorphism $\hat{\sigma}:For(\Theta) \to For(\Theta)$.

A  {\em logic}  is a pair ${\bf L}=\langle For,\vdash_{\bf L}\rangle$ where $For$ is the set of formulas and $\vdash_{\bf L}$ is the consequence relation. A logic {\bf L} is said to be {\em Tarskian} if it satisfies the following:~(1)~if $\varphi \in \Gamma$ then $\Gamma \vdash_{\bf L} \varphi$; (2)~if $\Gamma \vdash_{\bf L} \varphi$ and $\Gamma \subseteq \Delta$ then $\Delta \vdash_{\bf L} \varphi$; and~(3)~if $\Delta \vdash_{\bf L} \varphi$ and $\Gamma \vdash_{\bf L} \psi$ for every $\psi \in \Delta$ then $\Gamma \vdash_{\bf L} \varphi$. A Tarskian logic {\bf L} is  {\em finitary} if it satisfies: (4)~if $\Gamma \vdash_{\bf L} \varphi$ then there exists  a finite subset $\Gamma_0$ of $\Gamma$ such that $\Gamma_0 \vdash_{\bf L} \varphi$.
% {\bf L} is {\em structural} if $For=For(\Theta)$ for some signature $\Theta$, and $\Gamma \vdash_{\bf L} \varphi$ implies that $\hat{\sigma}(\Gamma) \vdash_{\bf L} \hat{\sigma}(\varphi)$ for every substitution $\sigma$  over $\Theta$. 

A very natural way to characterize (Tarskian) logics is by means of logical matrices, which formalize the idea of truth-tables:

   \begin{definition}[Logical matrices]
        Let $\Theta$ be a signature. A {\em   logical matrix} over $\Theta$ is a triple $\mathcal{M} = \langle \mathcal{V},\mathcal{D},\mathcal{O} \rangle$ where 
        \begin{enumerate}
            \item $\mathcal{V}$ is a non-empty set (of truth-values);
            \item $\mathcal{D} \subseteq \mathcal{V}$ is a non-empty set (of designated values);
            \item $\mathcal{O}$ is a function assigning, to each $\# \in\Theta_n$, a truth function $\tilde {\#} : \mathcal{V}^{n} \rightarrow \mathcal{V}$ (that is, $\mathcal{A}_\mathcal{M}=\langle \mathcal{V},\mathcal{O} \rangle$ is an algebra over $\Theta$).
        \end{enumerate}
\end{definition} 

\begin{definition} \label{conseq-mat}        
    A  valuation over a matrix $\mathcal{M}$ is a homomorphism $v:For(\Theta) \to \mathcal{A}_\mathcal{M}$. That is, for every $\# \in\Theta_n$ and for every $\varphi_1,\ldots,\varphi_n \in For(\Theta)$: $v(\#(\varphi_1, \ldots,\varphi_n))=\mathcal{O}(\#)(v(\varphi_1), \ldots,v(\varphi_n))$. The set of valuations over $\mathcal{M}$  is denoted by $Val(\mathcal{M})$.  We say that $\varphi$ is a semantical consequence of $\Gamma$ w.r.t. $\mathcal{M}$, denoted by $\Gamma \models_{\mathcal{M}} \varphi$ if, for every $v \in Val(\mathcal{M})$: $v(\varphi) \in \mathcal{D}$ whenever $v(\gamma) \in \mathcal{D}$ for every $\gamma \in \Gamma$. We write $\models_{\mathcal{M}} \varphi$ instead of  $\emptyset\models_{\mathcal{M}} \varphi$.
    \end{definition}

As mentioned in the Introduction, the formal notion of non-deterministic matrices (Nmatrices) was introduced by Avron and Lev in~\cite{avr:lev:01} as a way to overcome some G\"odel-Dugundji's uncharacterizability results. Finite-valued Nmatrices which are analytical (i.e., each partial valuation can be extended to a full valuation) provide a decision procedure for some logics which are uncharacterizable by a single finite-valued logic, such as some paraconsistent logics (see, for instance, \cite{avron:2007}).

\begin{definition}[Nmatrices,  \cite{avr:lev:01}] \label{def:Nmatrix}
    Let $\Theta$ be a signature. A {\em non-deterministic matrix} (Nmatrix, for short) over $\Theta$ is a triple $\mathcal{M} = \langle \mathcal{V},\mathcal{D},\mathcal{O} \rangle$ where 
    \begin{enumerate}
        \item $\mathcal{V}$ is a non-empty set (of truth-values);
        \item $\mathcal{D} \subseteq \mathcal{V}$ is a non-empty set (of designated values);
        \item $\mathcal{O}$ is a function assigning, to each $\# \in\Theta_n$, a  (non-deterministic) truth function $\mathcal{O}(\#) : \mathcal{V}^{n} \rightarrow (\wp(\mathcal{V})) \setminus \emptyset$  (that is, $\mathcal{A}_\mathcal{M}=\langle \mathcal{V},\mathcal{O} \rangle$ is a non-deterministic algebra over $\Theta$).
    \end{enumerate}
\end{definition} 

\begin{definition}[Valuations on Nmatrices,  \cite{avr:lev:01}] \label{conseq-Nmat}
    A  valuation over an Nmatrix $\mathcal{M}$ is a function $v:For(\Theta) \to \mathcal{V}$ such that, for every $\# \in\Theta_n$ and for every $\varphi_1,\ldots,\varphi_n \in For(\Theta)$, $v(\#(\varphi_1, \ldots,\varphi_n)) \in \mathcal{O}(\#)(v(\varphi_1), \ldots,v(\varphi_n))$. The set of valuations over $\mathcal{M}$  is denoted by $Val^{nd}(\mathcal{M})$. The notion of semantical consequence w.r.t. an Nmatrix $\mathcal{M}$, as well as the notation, will be analogous to the ones in Definition~\ref{conseq-mat}, but now w.r.t. valuations over the Nmatrix $\mathcal{M}$.
\end{definition}

However, as mentioned before, certain logics are still uncharacterizable by finite-valued Nmatrices, such as  
da Costa's paraconsistent logic $C_1$ (see~\cite{avron:2007}), modal logics \textbf{KT} and \textbf{S4} (see~\cite{gratz_truth_2022}), or \ipl\ (see Theorem~\ref{unchar-IPL} below). In these cases, finite-valued and decidable RNmatrices, which are analytical and co-analytical, can overcome this limitation.

\begin{definition}[RNmatrices, \cite{coniglio_two_2022}] \label{def:RNmatrix}
    Let $\Theta$ be a signature. An  RNmatrix over $\Theta$ is a pair $\mathcal{R} = \langle \mathcal{M}, \mathcal{F} \rangle$ where $\mathcal{M}$ is an Nmatrix over $\Theta$ and $\mathcal{F} \subseteq Val^{nd}(\mathcal{M})$.  The notion of semantical consequence w.r.t. an RNmatrix $\mathcal{R} = \langle\mathcal{M}, \mathcal{F}\rangle$, as well as the notation, will be analogous to the ones in Definition~\ref{conseq-mat}, but now w.r.t. valuations in $\mathcal{F}$.
\end{definition} 

It is worth noting that the logic generated by an RNmatrix is indeed Tarskian.

\begin{table}
    \centering
    \begin{tabular}{|c|c|}
        \hline
          & $\neg^{S4}$ \\ \hline \hline
        0 & $\{ 1, 2 \}$              \\ \hline
        1 & $\{ 0 \}$                 \\ \hline
        2 & $\{ 0 \}$                 \\ \hline
    \end{tabular}
    \begin{tabular}{|c|c|}
        \hline
          & $\Box^{S4}$ \\ \hline\hline
        0 & $\{ 0 \}$                 \\ \hline
        1 & $\{ 0 \}$                 \\ \hline
        2 & $\{ 2 \}$                 \\ \hline
    \end{tabular}
    \begin{tabular}{|c|c|c|c|}
        \hline
        $\rightarrow^{S4}$ & 0           & 1           & 2           \\ \hline\hline
        0                                & $\{ 1,2 \}$ & $\{ 1,2 \}$ & $\{ 1,2 \}$ \\ \hline
        1                                & $\{ 0 \}$   & $\{ 1,2 \}$ & $\{ 1,2 \}$ \\ \hline
        2                                & $\{ 0 \}$   & $\{ 1 \}$   & $\{ 1,2 \}$ \\ \hline
    \end{tabular}
    \begin{tabular}{|c|c|c|c|}
        \hline
        $\lor^{S4}$ & 0           & 1           & 2           \\ \hline\hline
        0                                & $\{ 0 \}$ & $\{ 1,2 \}$ & $\{ 1,2 \}$ \\ \hline
        1                                & $\{ 1,2 \}$   & $\{ 1,2 \}$ & $\{ 1,2 \}$ \\ \hline
        2                                & $\{ 1,2 \}$   & $\{ 1,2 \}$   & $\{ 1,2 \}$ \\ \hline
    \end{tabular}
    \begin{tabular}{|c|c|c|c|}
        \hline
        $\land^{S4}$ & 0           & 1           & 2           \\ \hline\hline
        0                                & $\{ 0 \}$ & $\{ 0 \}$ & $\{ 0 \}$ \\ \hline
        1                                & $\{ 0 \}$   & $\{ 1,2 \}$ & $\{ 1,2 \}$ \\ \hline
        2                                & $\{ 0 \}$   & $\{ 1,2 \}$   & $\{ 1,2 \}$ \\ \hline
    \end{tabular}
    \caption{Tables for $\mathcal{M}_{S4}$.}\label{table:nm_S4*}
\end{table}

\begin{example}[Gr\"atz's Nmatrix for \textbf{S4}]\label{def:gratz_nmatrix_s4}
    Let $\Sigma_b = \{ \{ \neg, \Box \}, \{\rightarrow\} \}$ and $\Sigma = \{ \{ \neg, \Box \}, \{\rightarrow, \lor, \land \} \}$ be, respectively, {\em  the basic signature} and  {\em  the signature}  for modal logic \sfo. Given the set $\mathcal{P}$ of propositional variables, let $For(\Sigma_b)$ and  $For(\Sigma)$ be the corresponding propositional languages. Gr\"atz's Nmatrix over $\Sigma_b$ is the tuple $\mathcal{M}_{S4} = \langle  \mathcal{V}, \mathcal{D}, \mathcal{O} \rangle$, where $\mathcal{V} = \{ 0, 1, 2 \}$, $\mathcal{D} = \{ 1, 2\}$, $\mathcal{O} = \{ \neg^{S4}, \Box^{S4} , \rightarrow^{S4} \}$ and each $c \in \mathcal{O}$ is defined in Table~\ref{table:nm_S4*}. We will return to this Nmatrix in Section~\ref{Sect:Gratz}.
\end{example}

By setting $\varphi \lor \psi := \neg \varphi \to \psi $ and $\varphi \land \psi := \neg (\varphi \to \neg \psi)$, respectively, the multioperators $\lor^{S4}$ and $\land^{S4}$ are defined as in Table~\ref{table:nm_S4*}.

As proven in~\cite{gratz_truth_2022}, no finite Nmatrix can characterize \sfo; in particular, $\mathcal{M}_{S4}$ cannot characterize this modal logic. In Section~\ref{Sect:Gratz} we will analyze an RNmatrix based on $\mathcal{M}_{S4}$ which characterizes \sfo.

%By setting $\varphi \lor \psi := \neg \varphi \to \psi $ and $\varphi \land \psi := \neg (\varphi \to \neg \psi)$, respectively, the multioperators $\lor^{\mathcal{A}_{S4}}$ and $\land^{\mathcal{A}_{S4}}$ are defined as in Table~\ref{table:nm_S4*}.

\section{G\"odel's uncharacterizability theorem updated}\label{sec3}

As mentioned in the Introduction, Gödel proved in $1932$  that \ipl\ cannot be characterized by a single finite-valued (deterministic) logical matrix. It is a natural question to determine whether \ipl\ could be characterized by means of a single finite-valued non-deterministic matrix. As we shall show in the sequel, it is easy to adapt the proof by G\"odel in~\cite{Godel1932} in order to see that the answer to the question above is negative.

\begin{definition} \label{int-sig}
    Consider the signature $\Omega =(\{ \neg \}, \{ \land, \lor, \rightarrow \} )$ for intuitionistic propositional logic. The set of intuitionistic formulas will be denoted by $For(\Omega)$.
\end{definition}

\begin{definition}  [G\"odel~\cite{Godel1932}]
For $n \geq 1$ let $G_n$ be the formula  expressing that ``there are at most $n$ truth-values'':
$$G_n := \bigvee_{1 \leq i < j \leq n+1} (p_i \to p_j) \land (p_j \to p_i).$$
\end{definition}

\begin{lemma} [G\"odel~\cite{Godel1932}] \label{lem-god-0}
None of the formulas $G_n$ is provable in \textbf{IPL}.
\end{lemma}

\begin{lemma} \label{lem-god-1}
If $\mathcal{M} = \langle \mathcal{V},\mathcal{D},\mathcal{O} \rangle$ is a finite Nmatrix such that $\mathcal{V}$ has exactly $n\geq 1$ elements and $\mathcal{M}$ models \textbf{IPL}, then the formula $G_n$ is valid in  $\mathcal{M}$.
\end{lemma}
\begin{proof} For each connective $\# \in \Omega$ let us denote $\mathcal{O}(\#)$ by $\bar{\#}$.
Since $\mathcal{M}$ models \textbf{IPL} then, for every $a,b\in \mathcal{V}$: (i)~$a \,\bar{\vee}\, b \subseteq \mathcal{D}$ if either $a \in \mathcal{D}$ or $b \in \mathcal{D}$;  (ii)~$a \,\bar{\land}\, b \subseteq \mathcal{D}$ if both $a \in \mathcal{D}$ and $b \in \mathcal{D}$; and  (iii)~$a \,\bar{\to}\, a \subseteq \mathcal{D}$ (since $\varphi \to \varphi$ is a theorem of  \textbf{IPL}). Given that $\mathcal{V}$ has exactly $n\geq 1$ elements, if $v$ is a valuation over $\mathcal{M}$ then, by the pigeonhole principle,  there exists $1 \leq i < j \leq n+1$ such that $v(p_i)=v(p_j)$. Let $a:=v(p_i)$. By consideration (iii) above, $v(p_i \to p_j) \in a \,\bar{\to}\, a \subseteq \mathcal{D}$, hence $v(p_i \to p_j) \in  \mathcal{D}$. Analogously, $v(p_j \to p_i) \in  \mathcal{D}$ and so, by (ii), $v((p_i \to p_j) \land (p_j \to p_i)) \in  \mathcal{D}$. Thus, from the fact that $G_n$ is a disjunction where $(p_i \to p_j) \land (p_j \to p_i)$ is one of its disjuncts, repeated application of (i) implies $v(G_n) \in \mathcal{D}$.
\end{proof}

%\todo[inline]{in the proof of lemma 3.4, you say "by the considerations above" -- I think that the second and third consideration suffice, which makes me wonder why is the first mentioned. If the first is needed, maybe it would be useful to specify how each consideration contributes.}

\begin{theorem} \label{unchar-IPL}
The logic \textbf{IPL} cannot be characterized by a single finite-valued Nmatrix.
\end{theorem}
\begin{proof}
Suppose that $\mathcal{M}$ is a finite Nmatrix with exactly $n\geq 1$ elements such that $\mathcal{M}$ models \textbf{IPL}. Suppose, by absurdum, that $\mathcal{M}$ characterizes \textbf{IPL}. Hence, every formula valid in $\mathcal{M}$ can be proved in \textbf{IPL}. By Lemma~\ref{lem-god-1}, the formula $G_n$ is valid in  $\mathcal{M}$. By hypothesis, $G_n$ must be provable in \textbf{IPL}. But this contradicts Lemma~\ref{lem-god-0}. This shows that no finite Nmatrix $\mathcal{M}$ can characterize \textbf{IPL}.
\end{proof}

\section{Kearns's characterization of \textbf{S4} by RNmatrices}\label{sec4}

Regarding semantics for normal modal logic, Kripke-style semantics are the usual choice. However, there are other attractive alternatives, such as algebraic semantics.
In the 1980s, Kearns  proposed in~\cite{kearns_modal_1981} an RNmatrix-based semantics for modal logics \textbf{KT}, \textbf{S4} and \textbf{S5}. He introduced the level valuations as the set $\mathcal{F}$ of valuations to be used in his RNmatrix. According to him,

\begin{quote}
    [It] is simpler than the standard [Kripke's] account in virtue of having dispensed with possible worlds and their relations. I also think that my account is philosophically preferable to the standard account for having done this. For I do not think there are such things as possible worlds, or even that they constitute a useful fiction. (Kearns, \cite[p. 86]{kearns_modal_1981})
\end{quote}

Figure~\ref{table:Ke} reconstructs Kearns's Nmatrix for \sfo\ over the signature $\Sigma_{Ke}=(\{\Box,\neg\},\{\vee\})$, where the truth-values are: $T$ (necessarily true), $t$ (contingently true), $f$ (contingently false) and $F$ (necessarily false). In this Nmatrix, the set of designated values is given by $\mathcal{D} = \{ T, t \}$.

\begin{figure}[h!]
    \centering
    \includegraphics[scale=0.5]{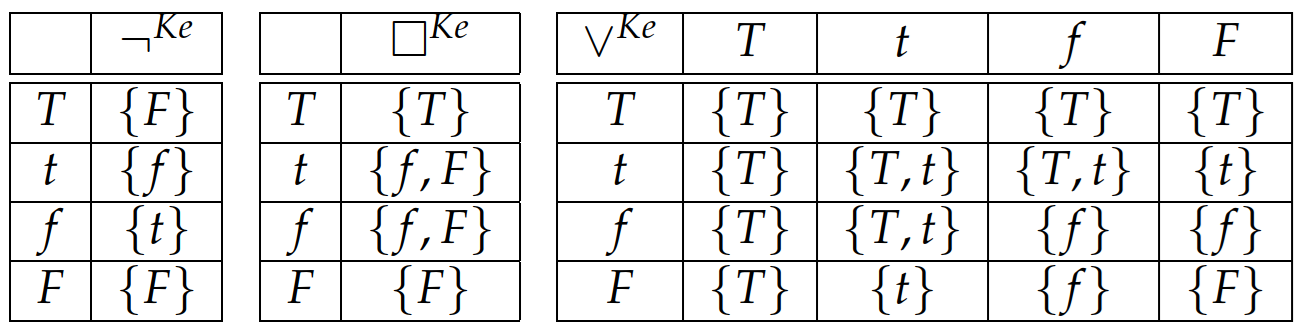}
    \caption{Tables for $\mathcal{M}_{Ke}$.}\label{table:Ke}
\end{figure}

%\todo[inline]{I might be missing something, but it seems like the set of designated values is not mentioned for $M_{Ke}$.}

Kearns proved that, in order to fully characterize \sfo\ using $\mathcal{M}_{Ke}$, it is sufficient to introduce the notion of \emph{level valuations}, which are defined as follows:

\begin{definition}\label{def:level_valuations_1}(Kearns's level valuations for \textbf{S4})

    Let $Val^{nd}(\mathcal{M}_{Ke})$ be the set of  valuations over $\mathcal{M}_{Ke}$. We define the set of $n$th-level valuations $\mathcal{L}^{Ke}_k$, where $n \in \omega$, as follows:

    \begin{enumerate}[label=\roman*)]
        \item $\mathcal{L}_0^{Ke} = Val^{nd}(\mathcal{M}_{Ke})$;
        \item $\mathcal{L}^{Ke}_{n + 1} = \{ v \in \mathcal{L}^{Ke}_n \mid \forall \alpha \in For(\Sigma_{Ke}), \mathcal{L}^{Ke}_n(\alpha) \subseteq \{T,t\} \Rightarrow v(\alpha) = T \}$
    \end{enumerate}

    where $\mathcal{L}^{Ke}_n(\alpha) = \{ v(\alpha) \mid v \in \mathcal{L}^{Ke}_n \}$.
    
    The set of level valuations in $\mathcal{M}_{Ke}$ is the intersection of the sets $\mathcal{L}^{Ke}_n$, for $n\geq0$:

    \[
        \mathcal{L}_{Ke} = \bigcap^\infty_{n \geq 0} \mathcal{L}^{Ke}_n.
    \]

\end{definition}

The way in which level valuations work can be appreciated in Figure~\ref{lev-val}.

\begin{figure}[h!]
    \centering
    \includegraphics[scale=0.3]{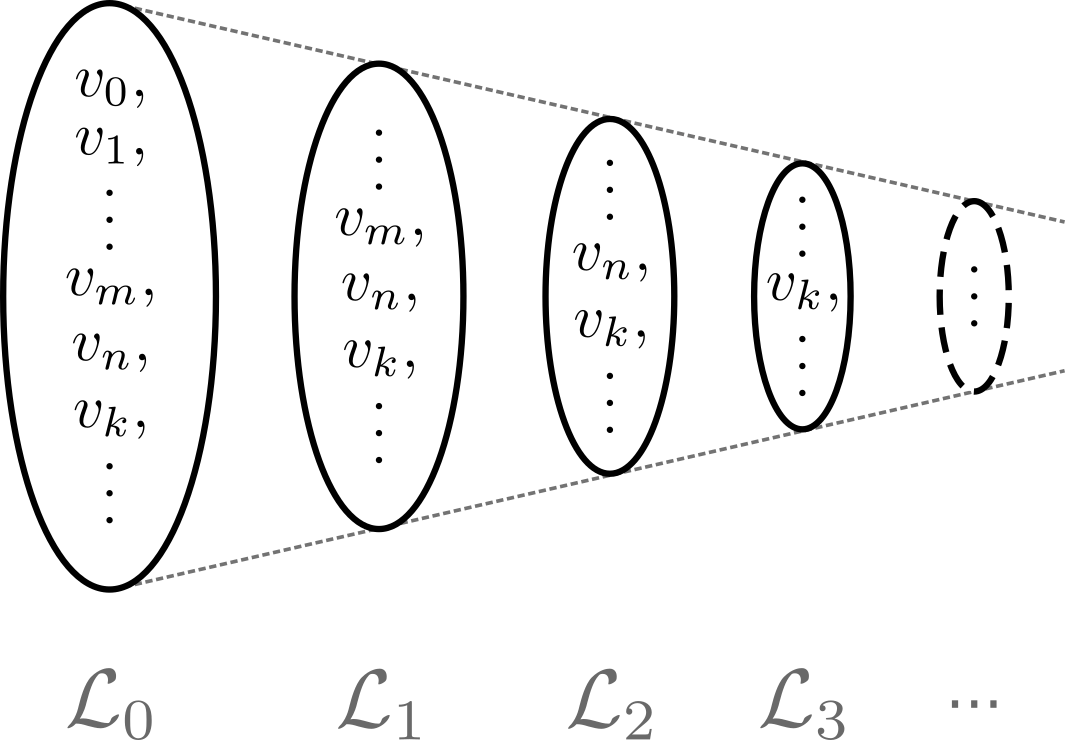}
    \caption{ Assume that $Val^{nd}(\mathcal{M}_{Ke}) = \{ v_0, v_1, \ldots, v_m, v_n, v_k, \ldots \}$. We remove a valuation $v$ in level $\mathcal{L}_{n+1}$ iff there is some  $\alpha$ such that $v(\alpha) = t$ and $w(\alpha) \in \{T,t\}$ for every $w \in \mathcal{L}_n$.}\label{lev-val}
\end{figure}

\begin{definition}[Kearns's RNmatrix for \textbf{S4}]\label{def:kearns_Rnmatrix_s4}
    The RNmatrix for \textbf{S4} defined by Kearns is the pair $\mathcal{R}(\mathcal{M}_{Ke}) = \langle \mathcal{M}_{Ke}, \mathcal{L}_{Ke}\rangle$.
\end{definition}

\begin{theorem} [\cite{kearns_modal_1981}] \label{Theor:sound_compl_S4K} For every $\Gamma$ and $\varphi$ over $\Sigma_{Ke}$: $\Gamma \vdash_{S4} \varphi$ iff $\Gamma  \models_{\mathcal{R}(\mathcal{M}_{Ke})} \varphi$.
\end{theorem}

\section{Gr\"atz's decision procedure for \sfo} \label{Sect:Gratz}

Kearns proved that his Nmatrix for \sfo\ with level valuations is sound and complete regarding the logic \sfo. However, as Section~\ref{sec4} explains, level valuations are defined as the intersection of an infinite family of subsets of valuation functions. How do we reproduce this procedure using finite steps? While it is true that level valuation semantics characterizes \sfo, it is not straightforward to decide the validity of a given formula using Kearns's tables.

In particular, given any theorem $\varphi$, the immediate table for $\Box \varphi$ will always present a partial valuation $v$ such that $v(\Box \varphi)$ is not a designated value, thus invalidating the rule of necessitation. Gr\"atz noticed that the problem lies in the definition of \textit{partial valuation}: such partial valuations are precisely the ones that cannot be extended to full level valuations; thus, they should be ruled out by some appropriate definition. 

\paragraph{Gr\"atz's Nmatrix for \sfo:} Instead of $4$ truth-values, Gr\"atz use a set with three values, $0, 1$ and $2$, where each one has the following intended meaning:

\begin{description}
    \item[$v(\varphi) = 0$] means that $\varphi$ is false;
    \item[$v(\varphi) = 1$] means that $\varphi$ is contingently true, i.e, $\varphi$ is true but possibly false;
    \item[$v(\varphi) = 2$] means that $\varphi$ is necessarily true.
\end{description}

From this interpretation, defining the subset $\mathcal{D} = \{ 2, 1 \}$ as the set of designated values is natural. As primitive operators, it is used negation ($\neg$), implication ($\to$), and box ($\Box$). The Nmatrix  $\mathcal{M}_{S4}$ over  $\Sigma_b = ( \{ \neg, \Box \}, \{\rightarrow\} )$ is defined as  in Example~\ref{def:gratz_nmatrix_s4} (see also Table~\ref{table:nm_S4*}).

%\begin{definition}[$\mathcal{M}_{S4}$]\label{def:gratz_nmatrix_s4}
%    Let $\Sigma_b = ( \{ \neg, \Box \}, \{\rightarrow\} )$ be the signature  for modal logic \textbf{S4}, and let $For(\Sigma_b)$ be the corresponding set of formulas. Gr\"atz's Nmatrix over $\Sigma_b$ is the tuple $\mathcal{M}_{S4} = \langle  \mathcal{V}, \mathcal{D}, \mathcal{O} \rangle$, where $\mathcal{V} = \{ 0, 1, 2 \}$, $\mathcal{D} = \{ 1, 2\}$ and $\mathcal{O}(\#):=\#^{S4}$, for a connective $\#$ in $\Sigma_b$, is defined as in Figure~\ref{table:nm_S4*}.
%\end{definition}

%\begin{figure}[h!]
%    \centering
%    \includegraphics[scale=0.5]{misc/tables-Gratz-orig}
%    \caption{Tables for $\mathcal{M}_{S4}$.}\label{table:nm_S4*}
%\end{figure}

To understand the Nmatrix for the modal operator $\Box$, we must recall the axioms that define \sfo. In particular, the axioms $\Box \varphi \to \varphi$ (reflexivity) and $\Box \varphi \to \Box \Box \varphi$ (transitivity). Consider the contrapositive of the reflexivity axiom: $\neg \varphi \to \neg \Box \varphi$. This means that if $\varphi$ is false, then $\Box \varphi$ is also false. The Nmatrix for $\Box$ reflects this behavior when $v(\varphi) = 0$. Next, when $v(\varphi) = 2$ (indicating that $\varphi$ is necessary, or in other words, $\Box \varphi$ is true), we have $v(\Box \varphi) = \{2\}$. This signifies that $\Box \varphi$ is also necessary, which implies that $\Box \Box \varphi$ is true as well. Finally, when $v(\varphi) = 1$, it indicates that $\varphi$ is contingent, meaning it is possibly false, hence $\Box \varphi$ is false.

\begin{definition}\label{def:level_valuations_2}(Gr\"atz's level valuations for \textbf{S4})

    Let $Val^{nd}(\mathcal{M}_{S4})$ be the set of  valuations over $\mathcal{M}_{S4}$. We define the set of $n$th-level valuations $\mathcal{L}^{S4}_k$, where $n \in \omega$, as follows:

    \begin{enumerate}[label=\roman*)]
        \item $\mathcal{L}_0^{S4} = Val^{nd}(\mathcal{M}_{S4})$;
        \item $\mathcal{L}^{S4}_{n + 1} = \{ v \in \mathcal{L}^{S4}_n \mid \forall \alpha \in For(\Sigma_{b}), \mathcal{L}^{S4}_n(\alpha) \subseteq \{1,2\} \Rightarrow v(\alpha) = 2 \}$
    \end{enumerate}

    where $\mathcal{L}^{S4}_n(\alpha) = \{ v(\alpha) \mid v \in \mathcal{L}^{S4}_n \}$.
    
    The set of level valuations in $\mathcal{M}_{S4}$ is the intersection of the sets $\mathcal{L}^{S4}_n$, for $n\geq0$:

    \[
        \mathcal{L}_{S4} = \bigcap^\infty_{n \geq 0} \mathcal{L}^{S4}_n.
    \]
\end{definition}

\begin{definition}[Gr\"atz's RNmatrix for \textbf{S4}]\label{def:gratz_Rnmatrix_s4}
    The RNmatrix for \textbf{S4} defined by Gr\"atz is the pair $\mathcal{R}(\mathcal{M}_{S4}) = \langle \mathcal{M}_{S4}, \mathcal{L}_{S4}\rangle$.
\end{definition}

Let $\vdash_{S4}^b$ be the consequence relation of the standard Hilbert calculus $\mathcal{H}^b_{S4}$ for \textbf{S4} over $\Sigma_b$. As usual, $\Gamma \vdash_{S4}^b \varphi$ iff either $\varphi$ is a theorem of $\mathcal{H}^b_{S4}$ or there exists a non-empty set $\{\beta_1,\ldots,\beta_n\} \subseteq \Gamma$ such that $\beta_1 \to (\beta_2 \to(\ldots \to(\beta_n \to \varphi) \ldots))$ is a theorem of $\mathcal{H}^b_{S4}$.

\begin{theorem} {\em(\cite[Theorems~3.5 and~3.10]{gratz_truth_2022})} \label{Theor:sound_compl_S4}  For every $\Gamma$ and $\varphi$ over $\Sigma_{b}$: $\Gamma \vdash_{S4}^b \varphi$ iff $\Gamma  \models_{\mathcal{R}(\mathcal{M}_{S4})} \varphi$.
\end{theorem}

In order to obtain a decision procedure for \textbf{S4} based on the $3$-valued RNmatrix $\mathcal{R}(\mathcal{M}_{S4})$, Gr\"atz proposes the notion of partial level valuation. The idea is as follows: given a formula $\varphi$ to be checked as a tautology in \textbf{S4}, construct the (finite) table generated by the Nmatrix $\mathcal{M}_{S4}$ for $\varphi$ and all of its subformulas. Then, by means of a decision procedure, it is possible to determine which rows of the table are allowed (since they correspond to level valuations); the non-allowed ones correspond to valuations over $\mathcal{M}_{S4}$ which {\em do not} correspond to level valuations, hence they are deleted. If $\varphi$ only receives a designated value in the allowed rows, then it is declared to be valid in \textbf{S4}, otherwise it is declared to be non-valid. Formally:

\begin{definition}[Partial valuation  in $\mathcal{M}_{S4}$]\label{def:partial_valuation0}
A partial valuation in $\mathcal{M}_{S4}$ is a function $\tilde v_p :\Lambda \rightarrow \mathcal{V}$ such that  $\Lambda \subseteq For(\Sigma_b)$ is  closed under subformulas\footnote{That is: if $\alpha \in \Lambda$ and $\beta$ is  a subformula of $\alpha$ then $\beta \in \Lambda$.} and, for every $\alpha,\beta \in \Lambda$:
 
 \begin{itemize}
 \item[--] if $\# \in \{\neg,\Box\}$ and  $\#\alpha \in \Lambda$ then $\tilde v_p(\#\alpha) \in \#^{S4}(\tilde v_p(\alpha))$;
 \item[--] if  $\alpha \to \beta\in\Lambda$ then  $\tilde v_p(\alpha \to \beta) \in {\to}^{S4}(\tilde v_p(\alpha),\tilde v_p(\beta))$.
 \end{itemize}
\end{definition}

\begin{definition}[Gr\"atz's partial level valuation]\label{def:partial_level_val_S4}
    Let   $\Lambda \subseteq For(\Sigma_b)$ be a  set of formulas closed under subformulas. A partial valuation $v_p : \Lambda \rightarrow \{ 0, 1, 2 \}$ in $\mathcal{M}_{S4}$ is a {\em partial level valuation over $\Lambda$} in $\mathcal{R}(\mathcal{M}_{S4})$ ($v_p \in PLV$) iff

    \begin{enumerate}
        \item $\forall \alpha \in \Lambda$ such that $v_p(\alpha) = 1$, there exists $w_p\in PLV$ such that $w_p(\alpha) = 0$ and, $\forall \beta \in \Lambda$, $w_p(\beta) = 2$ whenever $v_p(\beta) = 2$.
    \end{enumerate}
\end{definition}

\begin{remark} \label{rem:PLVs} The set $PLV$ plays a fundamental role in the proof of soundness and completeness of the truth-tables decision procedure introduced in~\cite{gratz_truth_2022}. Some comments on this notion are in order at this point. 

The existence (and uniqueness) of the set $PLV$ is the crucial fact used in his proof. In this respect, \cite[p. 15]{gratz_truth_2022} presents an algorithm (Algorithm~3.4)  for computing the set  $PLV$ of partial level valuations as a (constructive) truth-table. He argues that this process is well-defined since the set of all partial valuations with domain $\Lambda$ is finite, so is its powerset. Uniqueness is tacitly assumed, given the nature of the algorithm. Moreover, he proves that this notion is analytic in $\mathcal{M}_{S4}$ (i.e., every partial level valuation in $\mathcal{M}_{S4}$ can be extended to a level valuation in $\mathcal{M}_{S4}$) and co-analytic (i.e., for any level valuation $v$ in $\mathcal{M}_{S4}$, the restriction of $v$ to a partial level valuation $v_p$ defined over a finite set $\Lambda$ closed under subformulas is a partial level valuation). This argument, justified by the uniqueness of the set $PLV$, allows us to conclude that the truth-table procedure corresponds exactly to the level valuation semantics. A similar and fully detailed argument will be given for  \ipl\ in Section~\ref{tables-IPL}.
%Our argument can thus be easily adapted to \textbf{S4} to show uniqueness of the set $PLV$.
\end{remark}

%\begin{proposition} [Existence and uniqueness of partial level valuations in $\mathcal{R}(\mathcal{M}_{S4})$] \label{algorithm-PLV}
%    Let   $\Lambda \subseteq For(\Sigma_b)$ be a  set of formulas closed under subformulas. Then, there exists a unique set $PLV$ of partial level valuations over $\Lambda$ in $\mathcal{R}(\mathcal{M}_{S4})$. Moreover,  $PLV\neq \emptyset$.
%\end{proposition}
%\begin{proof}
%The proof can be easily adapted from the corresponding one for Proposition~\ref{algorithm-iPLV} to be given in Section~\ref{tables-IPL}.
%\end{proof}

From the observations above, one can easily prove that the truth-table method for \textbf{S4} is sound and complete, that is:

\begin{theorem}   {\em(\cite[Theorem~3.15]{gratz_truth_2022})} \label{theorem:gratz_an_coan}
Let $\varphi$ be a formula over $\Sigma_b$. Then,    $\models_{\mathcal{R}(\mathcal{M}_{S4})} \varphi$ if, and only if, for every partial valuation $v_p$ in $\mathcal{M}_{S4}$ defined over the set $\Lambda$ of subformulas of $\varphi$: if $v_p \in PLV$, then $v_p(\varphi) \in \mathcal{D}$.
\end{theorem}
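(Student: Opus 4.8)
The plan is to prove Theorem~\ref{theorem:gratz_an_coan} by exploiting the two structural properties of $\mathcal{R}(\mathcal{M}_{S4})$ established in Theorem~\ref{analyticity1}, namely analyticity and co-analyticity with respect to PLVs. The statement asserts the equivalence of two conditions: validity of $\varphi$ in the RNmatrix (a statement quantifying over \emph{all} level valuations, hence infinitely many functions on all of $For(\Sigma_b)$), and a \emph{finitary} condition quantifying only over partial valuations on the set $\Lambda$ of subformulas of $\varphi$ that happen to lie in $PLV$. The whole point is to collapse an infinite check into a finite one, so both directions should reduce to relating full level valuations to their restrictions and extensions on $\Lambda$.

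First I would prove the contrapositive of the ``only if'' direction (finitary check fails $\Rightarrow$ not valid). Suppose there is a partial valuation $v_p$ on $\Lambda$ with $v_p \in PLV$ but $v_p(\varphi) \notin \mathcal{D}$. By \emph{analyticity} (Theorem~\ref{analyticity1}), $v_p$ extends to a genuine level valuation $v \in \mathcal{L}_{S4}$; since $v$ agrees with $v_p$ on $\Lambda \ni \varphi$, we get $v(\varphi) = v_p(\varphi) \notin \mathcal{D}$, so $\not\models_{\mathcal{R}(\mathcal{M}_{S4})} \varphi$. Conversely, for the ``if'' direction I would again argue contrapositively: if $\not\models_{\mathcal{R}(\mathcal{M}_{S4})} \varphi$, there is a level valuation $v$ with $v(\varphi) \notin \mathcal{D}$. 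Restricting $v$ to $\Lambda$ (which is finite and closed under subformulas, as required by Definition~\ref{def:partial_valuation0}) yields, by \emph{co-analyticity}, a partial valuation $v_p = \restr{v}{\Lambda}$ that belongs to $PLV$; and $v_p(\varphi) = v(\varphi) \notin \mathcal{D}$, witnessing the failure of the finitary condition. The two directions are thus mirror images, each consuming exactly one half of Theorem~\ref{analyticity1}.

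The main obstacle is not the logical skeleton above, which is essentially a bookkeeping argument, but rather ensuring that the hypotheses of analyticity and co-analyticity are met cleanly. In particular, one must verify that $\Lambda$, the set of subformulas of $\varphi$, is indeed finite and closed under subformulas so that it is a legitimate domain for a partial valuation in the sense of Definition~\ref{def:partial_valuation0}; this is immediate but should be stated. A subtler point is that analyticity as phrased in Remark~\ref{rem:PLVs} guarantees an \emph{extension} to a level valuation, and one should confirm the extension genuinely preserves the value on every formula of $\Lambda$ (not merely that some extension exists), so that $v(\varphi) = v_p(\varphi)$ holds on the nose. Once these compatibility conditions are pinned down, the equivalence follows directly, with no residual computation required. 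I would therefore present the proof as a short two-direction argument, citing Theorem~\ref{analyticity1} for each inclusion and emphasizing that the finiteness of $\Lambda$ is what renders the resulting criterion an \emph{effective} decision procedure.
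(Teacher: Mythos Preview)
Your proposal is correct and matches the paper's approach: the paper does not give a detailed proof of this theorem but simply cites it from Gr\"atz and notes that it follows easily from analyticity and co-analyticity (Theorem~\ref{analyticity1}), which is precisely the two-direction argument you sketch, with each contrapositive consuming one half of that result.
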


\begin{corollary}  \label{Theor:sound_compl_S4-1} For every $\varphi$ over $\Sigma_b$: $\vdash_{S4}^b \varphi$ if, and only if, for every partial valuation $v_p$ in $\mathcal{M}_{S4}$ defined over the set $\Lambda$ of subformulas of $\varphi$, if $v_p \in PLV$ then $v_p(\alpha) \in \mathcal{D}$.
\end{corollary}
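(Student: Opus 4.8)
The plan is to obtain this statement simply by chaining the two equivalences established immediately above, specialised to the case of an empty premise set. First I would invoke Theorem~\ref{Theor:sound_compl_S4} with $\Gamma=\emptyset$; since $\emptyset\models_{\mathcal{R}(\mathcal{M}_{S4})}\varphi$ is by convention written $\models_{\mathcal{R}(\mathcal{M}_{S4})}\varphi$, and $\emptyset\vdash_{S4}^b\varphi$ means precisely that $\varphi$ is a theorem of $\mathcal{H}^b_{S4}$, this yields
\[
\vdash_{S4}^b\varphi \quad\text{iff}\quad \models_{\mathcal{R}(\mathcal{M}_{S4})}\varphi.
\]

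Next I would apply Theorem~\ref{theorem:gratz_an_coan}, which rewrites the condition $\models_{\mathcal{R}(\mathcal{M}_{S4})}\varphi$ as the requirement that every partial valuation $v_p$ in $\mathcal{M}_{S4}$ defined over the set $\Lambda$ of subformulas of $\varphi$ satisfies $v_p(\varphi)\in\mathcal{D}$ whenever $v_p\in PLV$. Composing the two biconditionals gives exactly the claimed equivalence, so no independent argument is required. Implicit in this step is that the relevant notion of partial level valuation is well-behaved, which is guaranteed by the analyticity and co-analyticity recorded in Remark~\ref{rem:PLVs} and Theorem~\ref{analyticity1}; these are precisely what make Theorem~\ref{theorem:gratz_an_coan} available as a substitution into the semantic side.

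Since the whole statement is a formal consequence of previously proved results, there is essentially no obstacle to overcome: the proof is a one-line composition of two equivalences. The only points deserving an explicit line of justification are the unwinding of the notational conventions, namely that the empty-premise instance of $\vdash_{S4}^b$ coincides with theoremhood and that $\models_{\mathcal{R}(\mathcal{M}_{S4})}\varphi$ abbreviates $\emptyset\models_{\mathcal{R}(\mathcal{M}_{S4})}\varphi$. I would also read the occurrence of $v_p(\alpha)$ in the statement as $v_p(\varphi)$, with the displayed formula $\varphi$ being the one whose subformulas generate $\Lambda$, in agreement with the formulation of Theorem~\ref{theorem:gratz_an_coan}.
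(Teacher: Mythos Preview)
Your proposal is correct and matches the paper's own proof, which simply states that the corollary is an immediate consequence of Theorems~\ref{theorem:gratz_an_coan} and~\ref{Theor:sound_compl_S4}. Your additional remarks on unwinding the $\Gamma=\emptyset$ conventions and reading $v_p(\alpha)$ as $v_p(\varphi)$ are appropriate clarifications of what the paper leaves implicit.
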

\begin{proof}
It is an immediate consequence of Theorems~\ref{theorem:gratz_an_coan} and~\ref{Theor:sound_compl_S4}.
\end{proof}

Therefore, to construct a truth-table, compute every partial valuation and remove any non-$PLV$ in a finite number of steps. This algorithmic process is a sound and complete decision procedure for \sfo. Clearly, the soundness and completeness Theorem~\ref{theorem:gratz_an_coan} (and hence the decision procedure) can be extended to inferences in \sfo\ from a finite set of premises, since $\vdash_{S4}^b$ (and thus $\models_{\mathcal{R}(\mathcal{M}_{S4})}$) satisfies the deduction metatheorem.

\section{Reducing Gr\"atz's RNmatrix and algorithm for \sfo} \label{sect:redu_RNmat_S4}

As suggested by Gr\"atz (\cite{gratz_truth_2022}, p. 17), it is possible to apply the notion of partial level valuation to the tables for the connectives of  \textbf{S4} (in our case, this includes conjunction and disjunction). Then, by eliminating rows $v_p$ which are not supported by any other $w_p$ as prescribed in Definition~\ref{def:partial_level_val_S4}, it is obtained a reduced truth-table for each multioperator of  \textbf{S4} over the full signature $\Sigma$. Along this section we will deal exclusively with signature $\Sigma$ for  \textbf{S4}.

It is easy to see that the reduced Nmatrix for  \textbf{S4} over signature $\Sigma$ (recall Example~\ref{def:gratz_nmatrix_s4} and Table~\ref{table:nm_S4*}) obtained in this way is $\mathcal{M}'_{S4} = \langle  \mathcal{V}, \mathcal{D}, \mathcal{O}' \rangle$, where $\mathcal{O}' = \{ \neg^{S4}, \Box^{S4}, \rightarrow'^{S4}, \lor'^{S4}, \land'^{S4} \}$ such that the tables for $\#'^{S4}$, for $\# \in \{\to, \vee, \land\}$, are displayed in Table~\ref{table:nm_S4*-R}.

\begin{table}
    \centering
    \begin{tabular}{|c|c|}
        \hline
          & $\neg^{S4}$ \\ \hline \hline
        0 & $\{ 1, 2 \}$              \\ \hline
        1 & $\{ 0 \}$                 \\ \hline
        2 & $\{ 0 \}$                 \\ \hline
    \end{tabular}
    \begin{tabular}{|c|c|}
        \hline
          & $\Box^{S4}$ \\ \hline\hline
        0 & $\{ 0 \}$                 \\ \hline
        1 & $\{ 0 \}$                 \\ \hline
        2 & $\{ 2 \}$                 \\ \hline
    \end{tabular}
    \begin{tabular}{|c|c|c|c|}
        \hline
        $\rightarrow'^{S4}$ & 0           & 1           & 2           \\ \hline\hline
        0                                & $\{ 1,2 \}$ & $\{ 1,2 \}$ & $\{ 2 \}$ \\ \hline
        1                                & $\{ 0 \}$   & $\{ 1,2 \}$ & $\{ 2 \}$ \\ \hline
        2                                & $\{ 0 \}$   & $\{ 1 \}$   & $\{ 2 \}$ \\ \hline
    \end{tabular}
    \begin{tabular}{|c|c|c|c|}
        \hline
        $\lor'^{S4}$ & 0           & 1           & 2           \\ \hline\hline
        0                                & $\{ 0 \}$ & $\{ 1,2 \}$ & $\{ 2 \}$ \\ \hline
        1                                & $\{ 1,2 \}$   & $\{ 1,2 \}$ & $\{ 2 \}$ \\ \hline
        2                                & $\{ 2 \}$   & $\{2 \}$   & $\{ 2 \}$ \\ \hline
    \end{tabular}  
    \begin{tabular}{|c|c|c|c|}
        \hline
        $\land'^{S4}$ & 0           & 1           & 2           \\ \hline\hline
        0                                & $\{ 0 \}$ & $\{ 0 \}$ & $\{ 0 \}$ \\ \hline
        1                                & $\{ 0 \}$   & $\{ 1 \}$ & $\{ 1 \}$ \\ \hline
        2                                & $\{ 0 \}$   & $\{ 1 \}$   & $\{ 2 \}$ \\ \hline
    \end{tabular}
    \caption{Tables for $\mathcal{M}'_{S4}$.}\label{table:nm_S4*-R}
\end{table}

Note that the Nmatrices in Table~\ref{table:nm_S4*-R} were obtained by computing truth-tables, and therefore filtered sets of partial valuations. Because of this, some rows were excluded. In particular, if $v(\beta) = 2$ and $v(\alpha \to \beta) = 1$, we cannot find a partial valuation $w$ such that $w(\alpha \to \beta) = 0$ and $w(\beta) = 2$. Therefore, such a partial valuation cannot be a partial level valuation and we can safely remove it from the definition of implication. This was also discussed in~\cite[p. 17]{gratz_truth_2022}.

Of course considering the reduced Nmatrix over signature $\Sigma$ requires adjusting the soundness and completeness proofs of Gr\"atz to the new RNmatrix defined over $\Sigma$. Let $\vdash_{S4}$ be the consequence relation of the standard Hilbert calculus $\mathcal{H}_{S4}$ for \sfo\ over $\Sigma$. The notion $\Gamma \vdash_{S4} \varphi$ of derivations  from premises in  $\mathcal{H}_{S4}$ is defined, as in the case of $\mathcal{H}^b_{S4}$, in terms of theoremhood.

Let  $Val(\mathcal{M}'_{S4})$ is the set of  valuations over $\mathcal{M}'_{S4}$.  Consider the family of level $\mathcal{L}'^{S4}_k$ ($k \geq 0$) as in Definition~\ref{def:level_valuations_1}, but now starting from ${\mathcal{L}'}_0^{S4} = Val(\mathcal{M}'_{S4})$ in item~i), and considering formulas $\alpha \in For(\Sigma)$ in item~ii). Let $\mathcal{L}'_{S4} = \bigcap^\infty_{k \geq 0} \mathcal{L}'^{S4}_k$ and $\mathcal{R}(\mathcal{M}'_{S4}) = \langle \mathcal{M}'_{S4}, \mathcal{L}'_{S4}\rangle$. 

\begin{remark} \label{rem:der:RNmatrices}
Let  $\mathcal{M}$ be an  Nmatrix. By the very definitions it is immediate to see that, if $\mathcal{F} \subseteq \mathcal{F}' \subseteq Val(\mathcal{M})$, then the following holds:  $\Gamma \models_{\langle \mathcal{M},\mathcal{F}'\rangle} \varphi$ implies that  $\Gamma \models_{\langle \mathcal{M},\mathcal{F}\rangle} \varphi$.
\end{remark}

\begin{theorem} [Soundness  of $\mathcal{H}_{S4}$ w.r.t.  $\mathcal{R}(\mathcal{M}'_{S4})$] \label{thm:sound:S4N}
For every $\Gamma$ and $\varphi$: $\Gamma \vdash_{S4} \varphi$ implies that $\Gamma  \models_{\mathcal{R}(\mathcal{M}'_{S4})} \varphi$.
\end{theorem}
\begin{proof} 
It coincides with the proof for $\mathcal{R}(\mathcal{M}_{S4})$ found in~\cite[Theorems~3.4 and~3.5]{gratz_truth_2022}.
The only detail to be checked is that any axiom of  $\mathcal{H}_{S4}$ is still valid in the reduced Nmatrix $\mathcal{M}'_{S4}$. Details are left to the reader.
\end{proof}

\begin{corollary} \label{valid_S4=2}
If $\vdash_{S4} \varphi$ then $v(\varphi)=2$ for every $v \in \mathcal{L}'_{S4}$.
\end{corollary}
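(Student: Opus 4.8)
The plan is to combine the soundness theorem just proved (Theorem~\ref{thm:sound:S4N}) with a sharpening of the value that valid formulas must take in level valuations. The corollary asserts not merely that $v(\varphi) \in \mathcal{D} = \{1,2\}$ for every $v \in \mathcal{L}'_{S4}$, but that $v(\varphi) = 2$ exactly. So the work is to rule out the value $1$.

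First I would observe that, by Theorem~\ref{thm:sound:S4N}, $\vdash_{S4} \varphi$ gives $\models_{\mathcal{R}(\mathcal{M}'_{S4})} \varphi$, which by definition of the consequence relation in an RNmatrix means $v(\varphi) \in \mathcal{D}$ for every $v \in \mathcal{L}'_{S4}$. Thus $v(\varphi) \in \{1,2\}$, and it remains only to exclude $v(\varphi) = 1$. For this I would appeal to the defining condition of the levels (Definition~\ref{def:level_valuations_1}, item~ii), adapted to signature $\Sigma$): a valuation survives to level $k+1$ only if, whenever a formula $\alpha$ receives a designated value throughout the whole of $\mathcal{L}'^{S4}_k(\alpha) \subseteq \mathcal{D}$, that valuation assigns $\alpha$ the value $2$. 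The key point is that if $\vdash_{S4}\varphi$, then by soundness $w(\varphi) \in \mathcal{D}$ for \emph{every} $w \in \mathcal{L}'_{S4}$, and in fact for every valuation surviving at each finite level; so the antecedent $\mathcal{L}'^{S4}_k(\varphi) \subseteq \mathcal{D}$ holds at every level, forcing any surviving $v$ to give $v(\varphi) = 2$.

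Concretely, I would argue as follows. Fix $v \in \mathcal{L}'_{S4} = \bigcap_{k\geq 0}\mathcal{L}'^{S4}_k$, so $v \in \mathcal{L}'^{S4}_{k+1}$ for every $k$. I claim $\mathcal{L}'^{S4}_k(\varphi) \subseteq \mathcal{D}$ for every $k$: indeed, every $u \in \mathcal{L}'^{S4}_k$ that is itself a level valuation gives $u(\varphi) \in \mathcal{D}$ by soundness, and one must check that the soundness argument already rules out value $1$ at the partial stages, or alternatively that $\mathcal{L}'^{S4}_k(\varphi) \subseteq \mathcal{D}$ follows from the validity of $\varphi$ together with the monotone shrinking $\mathcal{L}'^{S4}_{k+1} \subseteq \mathcal{L}'^{S4}_k$. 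Granting $\mathcal{L}'^{S4}_k(\varphi) \subseteq \mathcal{D}$, the membership $v \in \mathcal{L}'^{S4}_{k+1}$ together with item~ii) of the level definition forces $v(\varphi) = 2$, which is exactly the conclusion.

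The main obstacle I anticipate is justifying the claim $\mathcal{L}'^{S4}_k(\varphi) \subseteq \mathcal{D}$ at every finite level $k$, rather than only at the limit $\mathcal{L}'_{S4}$. Soundness as stated (Theorem~\ref{thm:sound:S4N}) is phrased in terms of the full RNmatrix $\mathcal{R}(\mathcal{M}'_{S4})$, i.e.\ the intersection of all levels, whereas item~ii) refers to $\mathcal{L}'^{S4}_k(\alpha)$ at each individual level. One must therefore either (i) verify by inspection of the multioperation tables that $\varphi$ being an $S4$-theorem already yields $u(\varphi)\in\mathcal{D}$ for every $u$ surviving at level $k$ — plausibly by induction on $k$, using that axioms never take value $0$ under any valuation and that \textsf{MP} and necessitation preserve designation — or (ii) note that if $v(\varphi)=1$ for some $v\in\mathcal{L}'_{S4}$ then the Definition~\ref{def:partial_level_val_S4}-style support condition would demand a companion valuation giving $\varphi$ the value $0$, contradicting validity. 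I would pursue route (i), since the soundness proof of Theorem~\ref{thm:sound:S4N} is already structured as a level-by-level preservation argument, and the exclusion of value $0$ at each level is exactly what that proof establishes.
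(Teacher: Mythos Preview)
Your approach has a genuine gap: the claim that $\mathcal{L}'^{S4}_k(\varphi) \subseteq \mathcal{D}$ for \emph{every} $k$ is false. Take $\varphi = \Box(p\to p)$, which is an S4-theorem. At level $0$ (all valuations over $\mathcal{M}'_{S4}$), choose $v(p)=0$ and $v(p\to p)=1 \in {\to}^{\mathcal{A}'_{S4}}(0,0)=\{1,2\}$; then $v(\Box(p\to p)) \in \Box^{\mathcal{A}_{S4}}(1)=\{0\}$, so $v(\varphi)=0\notin\mathcal{D}$. Your route~(i) breaks precisely here: necessitation does \emph{not} preserve designation at a fixed level. What the soundness argument actually gives (and what would salvage your strategy) is that for each theorem $\varphi$ there is \emph{some} $k_0$ with $\mathcal{L}'^{S4}_{k_0}(\varphi)\subseteq\mathcal{D}$ (determined by the necessitation depth of a derivation); since $v\in\mathcal{L}'^{S4}_{k_0+1}$, the level condition then forces $v(\varphi)=2$. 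But making this precise means reopening the internals of the soundness proof rather than invoking Theorem~\ref{thm:sound:S4N} as a black box. Your route~(ii) conflates the support condition defining $PLV$ with the definition of $\mathcal{L}'_{S4}$; the equivalence between these is only established later (Proposition~\ref{prop:analiticityS4}), so appealing to it here would be circular or at least out of order.

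The paper's proof avoids levels entirely and is much shorter: since $\vdash_{S4}\varphi$ implies $\vdash_{S4}\Box\varphi$ by necessitation, Theorem~\ref{thm:sound:S4N} applied to $\Box\varphi$ gives $v(\Box\varphi)\in\{1,2\}$; but $v(\Box\varphi)\in\Box^{\mathcal{A}_{S4}}(v(\varphi))$, and inspecting the $\Box$-table shows this set is $\{0\}$ unless $v(\varphi)=2$. One line, no induction on levels.
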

\begin{proof}
Assume that $\vdash_{S4} \varphi$, and let $v \in \mathcal{L}'_{S4}$. By Theorem~\ref{thm:sound:S4N}, $v(\varphi) \in \{1,2\}$. Since  $\vdash_{S4} \Box\varphi$ (by necessitation rule) then $v(\Box\varphi) \in \tilde{\Box} v(\varphi)$ such that  $v(\Box\varphi)\in \{1,2\}$, by Theorem~\ref{thm:sound:S4N}. Hence,  $v(\varphi) = 2$.
\end{proof}

%Clearly, $\mathcal{L}'^{S4}_k \subseteq \mathcal{L}^{S4}_k$ for every $k \geq 0$ and so $\mathcal{L}'_{S4} \subseteq \mathcal{L}_{S4}$. From this, the logic associated to $\mathcal{R}(\mathcal{M}_{S4})$ (that is, S4, by Theorem~\ref{Theor:sound_compl_S4}) is contained in the logic associated to  $\mathcal{R}(\mathcal{M}'_{S4})$. This proves soundness of $\mathcal{H}_{S4}$ w.r.t.  $\mathcal{R}(\mathcal{M}'_{S4})$.

The proof of completeness requires the use of a fundamental notion: $\varphi$-saturated sets.

\begin{remark} \label{varphi-sat-sets}
Recall that, given a Tarskian and finitary logic {\bf L} and a formula  $\varphi$, a set $\Delta$ of formulas is said to be {\em  $\varphi$-saturated in {\bf L}}  if $\Delta \nvdash_{\bf L} \varphi$, but $\Delta,\alpha \vdash_{\bf L} \varphi$, whenever $\alpha \notin\Delta$. It is a well-known result that, if $\Gamma \nvdash_{\bf L} \varphi$, there exists a $\varphi$-saturated set $\Delta$ in {\bf L} such that $\Gamma \subseteq \Delta$. If $\Delta$ is $\varphi$-saturated in {\bf L} then it is closed, that is: $\Delta \vdash_{\bf L} \alpha$ iff $\alpha \in \Delta$. In particular, this holds in the logic \sfo, which is generated  by $\mathcal{H}_{S4}$ over $\Sigma$. Since it contains classical logic, any $\varphi$-saturated set $\Delta$ satisfies: $\alpha \vee \beta \in \Delta$ iff $\alpha \in \Delta$ or $\beta \in \Delta$; $\alpha \land \beta \in \Delta$ iff $\alpha \in \Delta$ and $\beta \in \Delta$; and $\alpha \to \beta \in \Delta$ iff either $\alpha \notin \Delta$ or $\beta \in \Delta$. The following result is an adaptation of Lemma~3.7 in~\cite{gratz_truth_2022} to the reduced Nmatrix $\mathcal{M}'_{S4}$:
\end{remark}

\begin{lemma} \label{lemma~3.7-adapted}
Let $\Delta \subseteq For(\Sigma)$ be a $\varphi$-saturated set in the logic generated by $\mathcal{H}_{S4}$ (that is, \sfo). Let $v_\Delta:For(\Sigma) \to \mathcal{V}$ be the function  defined as follows:  \\[1mm]

$v_\Delta(\alpha)= \left \{ \begin{tabular}{rl}
$2$ & if $\Box\alpha \in \Delta$,\\[1mm]
$1$ & if $\Box\alpha \notin \Delta$ but $\alpha \in \Delta$,\\[1mm]
$0$ & if $\Box\alpha \notin \Delta$ and $\alpha \notin \Delta$.\\
\end{tabular}\right.$ \\[2mm]
Then, $v_\Delta$ is a valuation over the Nmatrix $\mathcal{M}'_{S4}$.
\end{lemma}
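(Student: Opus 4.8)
The plan is to verify directly that the function $v_\Delta$ satisfies the defining condition of a valuation over $\mathcal{M}'_{S4}$: for each connective $\#$ and each formula $\#(\alpha_1,\ldots,\alpha_n)$, the value $v_\Delta(\#(\alpha_1,\ldots,\alpha_n))$ must lie in the corresponding multioperator applied to the values $v_\Delta(\alpha_i)$. I would treat each connective in turn, using throughout the two facts about the $\varphi$-saturated set $\Delta$ established just before the lemma: first, the Boolean closure properties ($\alpha \vee \beta \in \Delta$ iff $\alpha \in \Delta$ or $\beta \in \Delta$, and analogously for $\land$ and $\to$); and second, the modal principles of S4 available via $\vdash_{S4}$ and the closure of $\Delta$, in particular $\Box\alpha \to \alpha$ (axiom T), $\Box\alpha \to \Box\Box\alpha$ (axiom 4), the normality/K-distribution, and the fact that $\Box$ distributes suitably over the connectives.

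First I would handle negation. Here $v_\Delta(\alpha)$ determines whether $\alpha,\Box\alpha \in \Delta$, and I would read off the three cases $v_\Delta(\alpha)\in\{0,1,2\}$ and check that $v_\Delta(\neg\alpha)$ falls in $\neg^{\mathcal{A}_{S4}}(v_\Delta(\alpha))$. The key observations are: if $v_\Delta(\alpha)=2$ then $\Box\alpha\in\Delta$, so $\alpha\in\Delta$ by T, hence $\neg\alpha\notin\Delta$ and $\Box\neg\alpha\notin\Delta$, giving $v_\Delta(\neg\alpha)=0$; and if $v_\Delta(\alpha)=0$ then $\alpha\notin\Delta$, so $\neg\alpha\in\Delta$ by Boolean saturation, forcing $v_\Delta(\neg\alpha)\in\{1,2\}$, matching the entry $\{1,2\}$. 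Next I would treat $\Box$: the crucial point is that $v_\Delta(\Box\alpha)$ can never be $1$, because $\Box\alpha\in\Delta$ iff $\Box\Box\alpha\in\Delta$ by axiom 4 together with T and closure, so $\Box\alpha$ and its box are in $\Delta$ simultaneously or neither is; this yields exactly the column $0\mapsto\{0\}$, $1\mapsto\{0\}$, $2\mapsto\{2\}$.

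The binary connectives are the substantive part, and I would organize the check by the nine value-pairs $(v_\Delta(\alpha),v_\Delta(\beta))$, comparing against the reduced tables in Table~\ref{table:nm_S4*-R}. For each cell I must determine membership of $\alpha\to\beta$ (resp.\ $\alpha\vee\beta$, $\alpha\wedge\beta$) and of its box in $\Delta$. Membership of the formula itself follows from Boolean saturation, but computing the value requires knowing when the \emph{boxed} compound lies in $\Delta$, and this is where the S4 modal reasoning enters: for instance the entry $\to^{\mathcal{A}'_{S4}}(2,1)=\{1\}$ forces me to show that when $\Box\alpha\in\Delta$ and $\beta\in\Delta,\Box\beta\notin\Delta$, one has $\alpha\to\beta\in\Delta$ but $\Box(\alpha\to\beta)\notin\Delta$ — the latter because $\Box(\alpha\to\beta)$ together with $\Box\alpha$ would yield $\Box\beta$ by K-distribution, contradicting $\Box\beta\notin\Delta$.

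The main obstacle I expect is precisely these boxed-compound cases where the reduced table has been tightened to a singleton (the entries valued $\{2\}$ or $\{1\}$ that were sets in the original $\mathcal{M}_{S4}$): I must show the stronger conclusion that the box of the compound is \emph{determined} by the boxes of the components, which is not a mere Boolean fact but relies on the specific S4 theorems relating $\Box(\alpha\wedge\beta)$ to $\Box\alpha\wedge\Box\beta$, $\Box(\alpha\to\beta)$ to implications between boxes, and $\Box(\alpha\vee\beta)$ to $\Box\alpha\vee\Box\beta$ (the last direction being the delicate one, since $\vee$ does not fully distribute under $\Box$ in general — here I would lean on the case analysis forced by the \emph{specific} input values rather than a general distribution law). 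Once all three tables are verified cell by cell, the proof is complete; the negation and $\Box$ cases are routine warm-ups, and the real care is reserved for confirming the singleton entries of the reduced binary tables against the modal behaviour of $\Delta$.
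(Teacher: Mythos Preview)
Your proposal is correct and uses essentially the same ingredients as the paper's proof: Boolean saturation of $\Delta$ for the classical part, together with the S4 theorems $\Box\alpha\to\alpha$, $\Box\alpha\to\Box\Box\alpha$, $\Box(\alpha\to\beta)\to(\Box\alpha\to\Box\beta)$, $(\Box\alpha\land\Box\beta)\leftrightarrow\Box(\alpha\land\beta)$ and $(\Box\alpha\vee\Box\beta)\to\Box(\alpha\vee\beta)$ for the modal refinements. The only differences are organizational: the paper defers $\neg$ and $\Box$ to Gr\"atz's original lemma (since those multioperators are unchanged) and, for the binary connectives, splits cases by the \emph{output} value $v_\Delta(\alpha\#\beta)\in\{0,1,2\}$ rather than by the nine input pairs, which makes for a slightly shorter write-up; your input-pair organization is equally valid and, as you correctly anticipate, never actually needs the non-theorem direction $\Box(\alpha\vee\beta)\to\Box\alpha\vee\Box\beta$.
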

\begin{proof}
The unary multioperators interpreting $\neg$ and $\Box$ in $\mathcal{M}'_{S4}$ and in the Nmatrix  $\mathcal{M}_{S4}$ for \textbf{S4} considered in~\cite{gratz_truth_2022} coincide, while $\to'^{S4}$ is different to the corresponding multioperator in $\mathcal{M}_{S4}$  (which is precisely $\to^{S4}$). In turn, the multioperators $\lor'^{S4}$ and $\land'^{S4}$ were not considered in~\cite[Lemma~3.7]{gratz_truth_2022}. Thus, it is only required to prove that $v_\Delta(\alpha \# \beta) \in \#'^{S4}(v_\Delta(\alpha),v_\Delta(\beta))$ for every $\alpha,\beta$ and $\# \in \{\lor,\land,\to\}$.\\[1mm]
{\bf Disjunction:} Suppose that $v_\Delta(\alpha \vee \beta)=2$. Then, $\Box(\alpha \vee \beta) \in \Delta$ and so $\alpha \vee \beta \in \Delta$, by axiom T: $\Box \psi \to \psi$ and the fact that $\Delta$ is closed under logical inferences in $\mathcal{H}_{S4}$. This implies that $\alpha \in \Delta$ or $\beta \in \Delta$, hence $v_\Delta(\alpha)  \in \mathcal{D}$ or $v_\Delta(\beta) \in \mathcal{D}$. This implies that $v_\Delta(\alpha \vee \beta)=2 \in  \vee'^{S4}(v_\Delta(\alpha),v_\Delta(\beta))$. Suppose now that $v_\Delta(\alpha \vee \beta)=1$. Then, $\Box(\alpha \vee \beta) \notin \Delta$ but $\alpha \vee \beta \in \Delta$ and so $\alpha \in \Delta$ or $\beta \in \Delta$. Suppose that $\Box\alpha \in \Delta$ or $\Box\beta \in \Delta$. Then, $\Box\alpha \vee \Box\beta \in \Delta$, and so  $\Box(\alpha \vee \beta) \in \Delta$, given that $(\Box\alpha \vee \Box\beta) \to \Box(\alpha \vee \beta)$ is a theorem in  $\mathcal{H}_{S4}$. But this is a contradiction, therefore $\Box\alpha \notin \Delta$ and $\Box\beta \notin \Delta$. This means that $v_\Delta(\alpha)=1$ and $v_\Delta(\beta) \in \{0,1\}$, or vice versa.  Then, $v_\Delta(\alpha \vee \beta)=1 \in  \{1,2\}=\vee'^{S4}(v_\Delta(\alpha),v_\Delta(\beta))$. Finally, if $v_\Delta(\alpha \vee \beta)=0$ then $\alpha \vee \beta \notin \Delta$, hence $\alpha \notin \Delta$ and $\beta \notin \Delta$. This means that $v_\Delta(\alpha)=v_\Delta(\beta)=0$ and so $v_\Delta(\alpha \vee \beta)=0 \in  \{0\}=\vee'^{S4}(v_\Delta(\alpha),v_\Delta(\beta))$.\\[1mm]
{\bf Conjunction:} Suppose that $v_\Delta(\alpha \land \beta)=2$. Then, $\Box(\alpha \land \beta) \in \Delta$ and so  $\Box\alpha \land \Box\beta \in \Delta$, since  $\Box(\alpha \land \beta) \to (\Box\alpha \land \Box\beta)$ is a theorem in  $\mathcal{H}_{S4}$. This implies that $\Box\alpha \in \Delta$ and $\Box\beta \in \Delta$, hence $v_\Delta(\alpha)=v_\Delta(\beta)=2$. Thus, $v_\Delta(\alpha \land \beta)=2 \in \{2\}= \land'^{S4}(v_\Delta(\alpha),v_\Delta(\beta))$. Now, assume  that $v_\Delta(\alpha \land \beta)=1$. Then, $\Box(\alpha \land \beta) \notin \Delta$ but $\alpha \land \beta \in \Delta$ and so $\alpha \in \Delta$ and $\beta \in \Delta$ but either  $\Box\alpha \notin \Delta$ or $\Box\beta \notin \Delta$ (since  $(\Box\alpha \land \Box\beta) \to \Box(\alpha \land \beta)$ is a theorem in  $\mathcal{H}_{S4}$).  That is,  $v_\Delta(\alpha), v_\Delta(\beta) \in \{1,2\}$ and either $v_\Delta(\alpha)=1$ or $v_\Delta(\beta)=1$.  Then, $v_\Delta(\alpha \land \beta)=1 \in  \{1\}=\land'^{S4}(v_\Delta(\alpha),v_\Delta(\beta))$. Finally, if $v_\Delta(\alpha \land \beta)=0$ then $\alpha \land \beta \notin \Delta$, hence $\alpha \notin \Delta$ or $\beta \notin \Delta$. This means that $v_\Delta(\alpha)=0$ or $v_\Delta(\beta)=0$ and so $v_\Delta(\alpha \land \beta)=0 \in  \{0\}=\land'^{S4}(v_\Delta(\alpha),v_\Delta(\beta))$.\\[1mm]
{\bf Implication:} Suppose that $v_\Delta(\alpha \to \beta)=2$. Then, $\Box(\alpha \to \beta) \in \Delta$ and so $\alpha \to \beta \in \Delta$, by axiom T. This implies that either $\alpha \notin \Delta$ or $\beta \in \Delta$. If $\alpha \notin \Delta$ then $v_\Delta(\alpha)=0$. Thus, $v_\Delta(\alpha \to \beta)=2 \in  {\to}'^{S4}(v_\Delta(\alpha),v_\Delta(\beta))$. If  $\beta \in \Delta$, suppose first that $\Box\alpha \in \Delta$. Then, by axiom K it follows that $\Box\beta \in \Delta$. Hence, $v_\Delta(\alpha)=v_\Delta(\beta)=2$ and so $v_\Delta(\alpha \to \beta)=2 \in  \{2\}={\to}'^{S4}(v_\Delta(\alpha),v_\Delta(\beta))$. Otherwise, if $\Box\alpha \notin \Delta$ then  $v_\Delta(\alpha)\neq 2$ and $v_\Delta(\beta)\neq 0$. From this, $v_\Delta(\alpha \to \beta)=2 \in  {\to}'^{S4}(v_\Delta(\alpha),v_\Delta(\beta))$. 
Suppose now that $v_\Delta(\alpha \to \beta)=1$. Then, $\Box(\alpha \to \beta) \notin \Delta$ but $\alpha \to \beta \in \Delta$  and so $\alpha \notin \Delta$ or $\beta \in \Delta$. In addition, $\Box\beta \notin \Delta$, since $\Box\beta \to \Box(\alpha \to \beta)$ is a theorem in  $\mathcal{H}_{S4}$. If $\alpha \notin \Delta$ then  $v_\Delta(\alpha)=0$ and $v_\Delta(\beta) \in \{0,1\}$.  Thus, $v_\Delta(\alpha \to \beta)=1 \in  \{1,2\}={\to}'^{S4}(v_\Delta(\alpha),v_\Delta(\beta))$. Otherwise, if $\beta \in \Delta$ then $v_\Delta(\beta)=1$ and so $v_\Delta(\alpha \to \beta)=1 \in  {\to}'^{S4}(v_\Delta(\alpha),v_\Delta(\beta))$.
Finally, if $v_\Delta(\alpha \to \beta)=0$ then $\alpha \to \beta \notin \Delta$, hence $\alpha \in \Delta$ and $\beta \notin \Delta$. This means that $v_\Delta(\alpha)  \in  \{1,2\}$ and $v_\Delta(\beta)=0$ and so $v_\Delta(\alpha \to \beta)=0 \in  \{0\}={\to}'^{S4}(v_\Delta(\alpha),v_\Delta(\beta))$.

This completes the proof.
\end{proof}

\begin{theorem} [Completeness  of $\mathcal{H}_{S4}$ w.r.t.  $\mathcal{R}(\mathcal{M}'_{S4})$] \label{thm:compl:S4N}
For every $\Gamma$ and $\varphi$: $\Gamma  \models_{\mathcal{R}(\mathcal{M}'_{S4})} \varphi$ implies that  $\Gamma \vdash_{S4} \varphi$.
\end{theorem}
\begin{proof}
It is analogous to the proof for $\mathcal{R}(\mathcal{M}_{S4})$ found in~\cite[Lemma~3.9 and Theorem~3.10]{gratz_truth_2022}, but now by using our Lemma~\ref{lemma~3.7-adapted} instead of~\cite[Lemma~3.7]{gratz_truth_2022}.
\end{proof}

Now, Definitions~\ref{def:partial_valuation0} and~\ref{def:partial_level_val_S4} can be easily adapted to   $\mathcal{M}'_{S4}$, obtaining so the set $PLV'$ of partial level valuations in $\mathcal{R}(\mathcal{M}'_{S4})$ over finite sets of formulas. However, it will be convenient to give a bit more detailed presentation of these sets, by specifying explicitly the domain of each valuation.

\begin{definition} Consider the sets 
$$CS(\Sigma)= \{\Lambda \subseteq For(\Sigma) \mid  \mbox{ $\Lambda$ is non-empty and closed under subformulas}\},$$
 $$FCS(\Sigma)= \{\Lambda \subseteq For(\Sigma) \mid  \mbox{ $\Lambda$ is finite, non-empty and closed under subformulas}\}.$$
\end{definition}

\begin{definition}[Partial valuation  in $\mathcal{M}'_{S4}$]\label{def:partial_valuationN}
 Let $\Lambda \in CS(\Sigma)$.  A partial valuation in $\mathcal{M}'_{S4}$ is a function $\tilde v_p :\Lambda \rightarrow \mathcal{V}$ such that, for every $\alpha,\beta \in \Lambda$:
 
 \begin{itemize}
 \item[--] if $\# \in \{\neg,\Box\}$ and  $\#\alpha \in \Lambda$ then $\tilde v_p(\#\alpha) \in \#^{S4}(\tilde v_p(\alpha))$;
 \item[--] if  $\# \in \{\to,\vee,\land\}$ and $\alpha \# \beta\in \Lambda$ then  $\tilde v_p(\alpha \# \beta) \in \#'^{S4}(\tilde v_p(\alpha),\tilde v_p(\beta))$.
 \end{itemize}
 Let $PV(\Lambda)$ be the set of  partial valuations in $\mathcal{M}'_{S4}$ with domain $\Lambda$.
\end{definition}

\begin{definition}[Partial level valuation over $\Lambda$]\label{def:partial_level_val_S4N}
    Let $\Lambda \in FCS(\Sigma)$. A  partial valuation $\tilde v_p \in PV(\Lambda)$ is a {\em partial level valuation} in $\mathcal{R}(\mathcal{M}'_{S4})$ over $\Lambda$  iff

    \begin{enumerate}
        \item[] $\forall \alpha \in \Lambda$ such that $\tilde v_p(\alpha) = 1$, there exists a partial level valuation $\tilde w_p$ in $\mathcal{R}(\mathcal{M}'_{S4})$ over $\Lambda$ such that $\tilde w_p(\alpha) = 0$ and, $\forall \beta \in \Lambda$, $\tilde w_p(\beta) = 2$ whenever $\tilde v_p(\beta) = 2$.
    \end{enumerate}
The set of partial level valuations in $\mathcal{R}(\mathcal{M}'_{S4})$ over $\Lambda$ will be denoted by $PLV(\Lambda)$.
\end{definition}

\begin{remark} \label{rem:PLVsN}
Given $v,w$ and $\alpha$ let $\mathsf{P}^4_\Lambda(v,w,\alpha)$ iff $w(\alpha) = 0$ and, $\forall \beta \in \Lambda$, $w(\beta) = 2$ whenever $v(\beta) = 2$. If $\Lambda \in FCS(\Sigma)$ then\\[2mm]
$\begin{array}{lll}
PLV(\Lambda)&=& \{\tilde v_p \in PV(\Lambda) \mid \forall \alpha \in \Lambda \big(\tilde v_p(\alpha) = 1 \mbox{ implies that } \mathsf{P}^4_\Lambda(\tilde v_p,\tilde w_p,\alpha)\\[1mm] 
&&\hspace*{7mm}\mbox{ for some $\tilde w_p \in PLV(\Lambda)$}\big)\}.
\end{array}$
\end{remark}

As observed in Remark~\ref{rem:PLVs} for $\mathcal{M}_{S4}$, since $\Lambda$ is finite then $PV(\Lambda)$  is finite, hence the definition above is not cyclic. To be more precise, the following result can be proven:

\begin{proposition} [Existence and uniqueness of partial level valuations in $\mathcal{R}(\mathcal{M}'_{S4})$] \label{algorithm-PLV}
    Let   $\Lambda \in FCS(\Sigma)$. Then, there exists a unique set $PLV(\Lambda) \subseteq PV(\Lambda)$ satisfying the condition of Remark~\ref{rem:PLVsN}. Moreover,  $PLV(\Lambda) \neq \emptyset$.
\end{proposition}
\begin{proof}
The proof can be easily adapted from the corresponding one for Proposition~\ref{algorithm-iPLV} for \ipl\  to be given in Section~\ref{tables-IPL}.
\end{proof}

 The next step is to show  analyticity and co-analyticity w.r.t.  level valuations in $\mathcal{M}'_{S4}$. In order to do this it is considered, adapting~\cite[Definition~3.12]{gratz_truth_2022}, the intermediary notion of partial' level valuation in $\mathcal{R}(\mathcal{M}'_{S4})$ over $\Lambda$.

\begin{definition}[Partial' level valuation over $\Lambda$]\label{def:partial_level_val_S4NN}
    Let $\Lambda \in CS(\Sigma)$. A  partial valuation $\tilde v_p \in PV(\Lambda)$ is a {\em partial' level valuation} in $\mathcal{R}(\mathcal{M}'_{S4})$ over $\Lambda$  iff

    \begin{enumerate}
        \item[] $\forall \alpha \in \Lambda$ such that $\tilde v_p(\alpha) = 1$, there exists a level valuation $w$ in $\mathcal{L}'_{S4}$ such that $w(\alpha) = 0$ and, $\forall \beta \in \Lambda$, $w(\beta) = 2$ whenever $\tilde v_p(\beta) = 2$.
    \end{enumerate}
The set of partial' level valuations in $\mathcal{R}(\mathcal{M}'_{S4})$ over $\Lambda$ will be denoted by $PLV'(\Lambda)$.
\end{definition}

\begin{remark} \label{rem:PLVsNN}
Consider the predicate $\mathsf{P}^4_\Lambda(v,w,\alpha)$ introduced in Remark~\ref{rem:PLVsN}.  Clearly,\\[2mm]
$\begin{array}{lll}
PLV'(\Lambda)&=& \{\tilde v_p \in PV(\Lambda) \mid \forall \alpha \in \Lambda \big(\tilde v_p(\alpha) = 1 \mbox{ implies that } \mathsf{P}^4_\Lambda(\tilde v_p,w,\alpha)\\[1mm] 
&&\hspace*{7mm}\mbox{ for some $w \in \mathcal{L}'_{S4}$}\big)\}.
\end{array}$
\end{remark}

\begin{lemma} [Co-analyticity lemma for $\mathcal{R}(\mathcal{M}'_{S4})$] \label{co-analyticityS4} Let $v \in \mathcal{L}'_{S4}$ and  $\Lambda \in CS(\Sigma)$. Then, the restriction $\tilde v_p:=v_{|\Lambda}$ of $v$ to the domain $\Lambda$ belongs to $PLV'(\Lambda)$.
\end{lemma}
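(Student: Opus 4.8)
The plan is to unwind Definition~\ref{def:partial_level_val_S4NN} into its two components: first that $\tilde v_p := v_{|\Lambda}$ is a legitimate partial valuation, i.e.\ $\tilde v_p \in PV(\Lambda)$, and second that it satisfies the witnessing clause defining $PLV'(\Lambda)$. The first component is immediate. Since $\Lambda$ is closed under subformulas, whenever $\#\alpha \in \Lambda$ (resp.\ $\alpha\#\beta \in \Lambda$) its immediate subformulas already lie in $\Lambda$, and because $v \in \mathcal{L}'_{S4} \subseteq Val(\mathcal{M}'_{S4})$ respects every multioperator by definition, the restriction $\tilde v_p$ inherits exactly the inclusions required by Definition~\ref{def:partial_valuationN}. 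So all the real work lies in the witnessing clause.

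Fix an arbitrary $\alpha \in \Lambda$ with $\tilde v_p(\alpha) = v(\alpha) = 1$; I must produce some $w \in \mathcal{L}'_{S4}$ with $w(\alpha) = 0$ and $w(\beta) = 2$ for every $\beta \in \Lambda$ such that $v(\beta) = 2$. The witness will be built canonically from a saturated set. Put $\Gamma_0 = \{\gamma \in For(\Sigma) \mid v(\gamma) = 2\}$ and note that $\Gamma_0$ is closed under $\Box$: if $v(\gamma)=2$ then $v(\Box\gamma) \in \Box^{\mathcal{A}_{S4}}(2) = \{2\}$, so $\Box\gamma \in \Gamma_0$; in particular $\Box\beta \in \Gamma_0$ whenever $v(\beta)=2$. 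The key step is to show $\Gamma_0 \nvdash_{S4} \alpha$. Suppose otherwise. By finitariness there are $\gamma_1,\dots,\gamma_n \in \Gamma_0$ with $\gamma_1,\dots,\gamma_n \vdash_{S4}\alpha$ (the case $n=0$, i.e.\ $\vdash_{S4}\alpha$, being included). Using the deduction metatheorem, necessitation, and repeated application of axiom K to distribute $\Box$, one obtains $\Box\gamma_1,\dots,\Box\gamma_n \vdash_{S4} \Box\alpha$. Applying soundness (Theorem~\ref{thm:sound:S4N}) in consequence form to the level valuation $v$, and using $v(\Box\gamma_i) \in \Box^{\mathcal{A}_{S4}}(v(\gamma_i)) = \Box^{\mathcal{A}_{S4}}(2) = \{2\} \subseteq \mathcal{D}$ for each $i$, we conclude $v(\Box\alpha) \in \mathcal{D}$. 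But $v(\alpha)=1$ forces $v(\Box\alpha) \in \Box^{\mathcal{A}_{S4}}(1) = \{0\}$, contradicting $v(\Box\alpha) \in \mathcal{D}$. Hence $\Gamma_0 \nvdash_{S4}\alpha$.

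By the standard Lindenbaum construction there is an $\alpha$-saturated set $\Delta \supseteq \Gamma_0$ in S4; being closed, it satisfies $\alpha \notin\Delta$, and since $\Box\alpha \to \alpha$ is an axiom, also $\Box\alpha \notin \Delta$. Define $w := v_\Delta$ as in Lemma~\ref{lemma~3.7-adapted}, so that $w$ is a valuation over $\mathcal{M}'_{S4}$. Then $w(\alpha) = 0$ because $\Box\alpha\notin\Delta$ and $\alpha\notin\Delta$; and for every $\beta$ with $v(\beta)=2$ we have $\Box\beta \in \Gamma_0 \subseteq \Delta$, so $w(\beta) = 2$ by the definition of $v_\Delta$. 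It remains to know that $w \in \mathcal{L}'_{S4}$, which is precisely the content of the canonical-model step of the proof of Theorem~\ref{thm:compl:S4N} (the adaptation of Gr\"atz's Lemma~3.9), establishing that $v_\Delta$ is a level valuation. Thus $w$ witnesses the clause for $\alpha$, and as $\alpha$ ranged over all formulas of $\Lambda$ with $\tilde v_p(\alpha)=1$, we conclude $\tilde v_p \in PLV'(\Lambda)$.

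The main obstacle is the key step $\Gamma_0 \nvdash_{S4}\alpha$: it is the single place where the purely semantic hypothesis ``$v$ is a level valuation with $v(\alpha)=1$'' must be converted into a syntactic non-derivability statement, and it succeeds only by boxing up the hypothetical derivation of $\alpha$ and reading off the $\Box$-table of $\mathcal{M}'_{S4}$ under $v$ via soundness. A secondary point requiring care is the appeal to $v_\Delta \in \mathcal{L}'_{S4}$, which I do not reprove but import from the completeness argument; one should check that the cited construction indeed lands in the full intersection $\mathcal{L}'_{S4}$ and not merely in some finite level.
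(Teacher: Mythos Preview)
Your proof is correct, but it proceeds along a genuinely different route from the paper's. The paper argues by contradiction: assuming $\tilde v_p \notin PLV'(\Lambda)$, it takes the failing $\alpha$ and the set $\Gamma = \{\beta \in \Lambda \mid \tilde v_p(\beta)=2\}$, notes that the non-existence of a witness $w$ amounts to the semantic consequence $\{\Box\beta \mid \beta\in\Gamma\} \models_{\mathcal{R}(\mathcal{M}'_{S4})} \alpha$, applies \emph{completeness} (Theorem~\ref{thm:compl:S4N}) as a black box to turn this into $\{\Box\beta \mid \beta\in\Gamma\} \vdash_{S4} \alpha$, and then (via necessitation, soundness, and the $\to^{\mathcal{A}'_{S4}}$ table) computes $v(\Box(\Box\beta_1 \to \cdots \to \alpha))=0$, contradicting validity. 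Your approach is direct and constructive: you first establish the syntactic fact $\Gamma_0 \nvdash_{S4}\alpha$ (with $\Gamma_0$ taken globally over $For(\Sigma)$, not just $\Lambda$), then invoke Lindenbaum and the canonical valuation $v_\Delta$ from Lemma~\ref{lemma~3.7-adapted} to manufacture the witness explicitly. In effect you open up the completeness proof and reuse its internal machinery (Lindenbaum plus the fact, embedded in the proof of Theorem~\ref{thm:compl:S4N}, that $v_\Delta \in \mathcal{L}'_{S4}$), whereas the paper keeps completeness sealed and only uses soundness operationally. Both arguments hinge on the same necessitation-plus-$\Box$-table trick; the paper's version is marginally cleaner in its dependencies, while yours makes the witness $w$ tangible and would adapt more readily if one wanted an explicit description of it.
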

\begin{proof} It is an adaptation of the proof of~\cite[Lemma~3.13]{gratz_truth_2022}.
Suppose, by contradiction, that $\tilde v_p \notin PLV'(\Lambda)$. 
Let $\Gamma:= \{\beta \in \Lambda \mid \tilde v_p(\beta) =2\}$. Then, there exists $\alpha \in \Lambda$ such that $\tilde v_p(\alpha) = 1$ and, for every $w \in \mathcal{L}'_{S4}$, $w(\alpha)=0$ implies that $w(\beta) \neq 2$ for some $\beta \in \Gamma$. Hence, for every $w \in \mathcal{L}'_{S4}$, $w(\alpha) \not\in \mathcal{D}$ implies that $w(\Box \beta) \notin \mathcal{D}$ for some $\beta \in \Gamma$. By contraposition, this means that $\{\Box \beta \mid \beta \in \Gamma \} \models_{\mathcal{R}(\mathcal{M}'_{S4})} \alpha$.  By Theorem~\ref{thm:compl:S4N},  $\{\Box \beta \mid \beta \in \Gamma \} \vdash_{S4} \alpha$. Observe that $\nvdash_{S4} \alpha$: otherwise, $v(\alpha)=2$, by Corollary~ \ref{valid_S4=2}. Hence, there exist $\beta_1,\ldots, \beta_n \in \Gamma$ such that $\vdash_{S4} \Box\beta_1 \to (\Box\beta_2 \to(\ldots \to(\Box\beta_n \to \alpha) \ldots))$. By necessitation rule,  $\vdash_{S4} \Box(\Box\beta_1 \to (\Box\beta_2 \to(\ldots \to(\Box\beta_n \to \alpha) \ldots)))$. By Theorem~\ref{thm:sound:S4N}, $\models_{\mathcal{R}(\mathcal{M}'_{S4})} \Box(\Box\beta_1 \to (\Box\beta_2 \to(\ldots \to(\Box\beta_n \to \alpha) \ldots)))$. But $v(\Box\beta_i) =2$ for $1 \leq i \leq n$ and $v(\alpha)=1$, hence $v(\Box\beta_1 \to (\Box\beta_2 \to(\ldots \to(\Box\beta_n \to \alpha) \ldots)))=1$. This means that $v(\Box(\Box\beta_1 \to (\Box\beta_2 \to(\ldots \to(\Box\beta_n \to \alpha) \ldots))))=0$, a contradiction. From this we infer that  $\tilde v_p \in PLV'(\Lambda)$. 
\end{proof}

\begin{definition}
The {\em complexity} $\co_1(\alpha)$ of a formula $\alpha \in For(\Sigma)$ is defined as follows:  $\co_1(p)=0$ if $p$ is  a propositional variable; $\co_1(\neg \alpha)=\co_1(\Box \alpha)= \co_1(\alpha)+1$; and $\co_1(\alpha \# \beta)=\co_1(\alpha)+\co_1(\beta)+1$, for $\# \in \{\to,\vee,\land\}$. Let $For(\Sigma)_n=\{\alpha \in For(\Sigma) \mid \co_1(\alpha) \leq n\}$ for every $n \geq 0$.
\end{definition}

\begin{remark} \label{enum:forS4}
Observe that it is possible to define an enumeration  $\alpha_1,\alpha_2,\ldots \alpha_m, \ldots$ (for $m \in \omega^2$)  of $For(\Sigma)$ such that $\co_1(\alpha_i) \leq \co_1(\alpha_j)$ if $i \leq j$ and, for every $i$ such that $\co_1(\alpha_i) >0$, if $\beta$ is a strict subformula of $\alpha_i$ then $\beta=\alpha_j$ for some $j < i$. Such an enumeration of $For(\Sigma)$ can be defined as follows: every formula have and index in $\omega^2$, which is a denumerable ordinal, such that all the formulas with complexity $0$ (which form a denumerable set) are placed first, with indexes in $I_0:=\omega$; after this, all the formulas with complexity $1$ (which form a denumerable set) are placed with an index in $I_1:=\omega\cdot 2\setminus \omega=\{\omega,\omega + 1, \ldots\}$; in general, the formulas with complexity $n$ (which form a denumerable set) have an index in $I_{n}:=\omega\cdot(n+1)\setminus\omega\cdot n=\{\omega\cdot n, \omega\cdot n+1, \ldots\}$.
\end{remark}

Based on the enumeration of $For(\Sigma)$ given in the previous remark, consider the following sets:

\begin{definition}  \label{sets:enum:forS4}
Let $\alpha_1,\alpha_2,\ldots \alpha_m, \ldots$ (for $m \in \omega^2$) be an enumeration of $For(\Sigma)$ as in Remark~\ref{enum:forS4}. For every $n,m \in \omega$ let  ${\Lambda}_n:=\Lambda \cup For(\Sigma)_n$;   ${\Lambda}_n^0:={\Lambda}_n$; and ${\Lambda}_n^{m+1}={\Lambda}_n^m \cup \{\alpha_{\omega\cdot (n+1) +m}\}$.
\end{definition}

\begin{remark} \label{enum2:forS4} By definition, ${\Lambda}_n^{m+1}={\Lambda}_n \cup \{\alpha_{\omega\cdot (n+1)}, \alpha_{\omega\cdot (n+1) +1}, \ldots, \alpha_{\omega\cdot (n+1) +m}\}$, and $\alpha_{\omega\cdot (n+1) +m}\in \Lambda$ if and only if ${\Lambda}_n^{m+1}={\Lambda}_n^m$. Hence, ${\Lambda}_n^{m+1}$ is obtained from ${\Lambda}_n$ by adding the first $m+1$ formulas (of the given enumeration) with complexity $n+1$, and so ${\Lambda}_n^{m}$ adds at most $m$ formulas to ${\Lambda}_n$, for $m \in \omega$.
It is worth noting that ${\Lambda}_n^m \in CS(\Sigma)$ for every $n,m \in \omega$; in particular, ${\Lambda}_n \in CS(\Sigma)$ for every $n \in \omega$. Clearly ${\Lambda}_n^m \subseteq {\Lambda}_n^{m+1}$, ${\Lambda}_{n+1} = \bigcup_{m \in \omega} {\Lambda}_n^m$, and $For(\Sigma)= \bigcup_{n \in \omega} {\Lambda}_n$.
\end{remark}

\begin{lemma} [Analyticity lemma for $\mathcal{R}(\mathcal{M}'_{S4})$] \label{analyticityS4} Let $\Lambda \in CS(\Sigma)$ and  $\tilde v_p \in PLV'(\Lambda)$. Then, there exists $v \in \mathcal{L}'_{S4}$ such that the restriction $v_{|\Lambda}$ of $v$ to the domain $\Lambda$ coincides with $\tilde v_p$.
\end{lemma}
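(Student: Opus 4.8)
The plan is to first upgrade the two preceding lemmas into a characterization and then reduce the statement to a pure extension problem. Since $For(\Sigma)\in CS(\Sigma)$, I would establish that $PLV'(For(\Sigma))=\mathcal{L}'_{S4}$. The inclusion $\mathcal{L}'_{S4}\subseteq PLV'(For(\Sigma))$ is Lemma~\ref{co-analyticityS4} taken with $\Lambda=For(\Sigma)$. For the converse I would show by induction on $k$ that any $v\in PLV'(For(\Sigma))$ lies in every level $\mathcal{L}'^{S4}_k$: at the successor step, if $\mathcal{L}'^{S4}_k(\alpha)\subseteq\mathcal{D}$ then $v(\alpha)\in\mathcal{D}$, and $v(\alpha)=1$ is impossible, since the witness $w\in\mathcal{L}'_{S4}\subseteq\mathcal{L}'^{S4}_k$ supplied by Definition~\ref{def:partial_level_val_S4NN} would force $w(\alpha)=0\notin\mathcal{D}$; hence $v(\alpha)=2$. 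With this, the lemma reduces to extending $\tilde v_p\in PLV'(\Lambda)$ to some $v\in PLV'(For(\Sigma))$ agreeing with $\tilde v_p$ on $\Lambda$.

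Next comes the construction. I would enumerate $For(\Sigma)$ as in Remark~\ref{enum:forS4} and define $v$ by recursion along this $\omega^2$-enumeration, setting $v=\tilde v_p$ on $\Lambda$ and, at each new formula $\alpha$ (whose proper subformulas are already treated, so that every domain met along the way stays in $CS(\Sigma)$), choosing a value by the single-formula step below, taking unions at limit indices. The invariant to maintain is that each initial restriction $v_{|\Delta}$ is a partial' level valuation over its subformula-closed domain $\Delta$. The design choice that makes the limit steps harmless is to fix $\Gamma_0=\{\beta\in\Lambda \mid \tilde v_p(\beta)=2\}$ and to assign the value $2$ to a new formula $\alpha$ only when it is forced to $2$ by $\Gamma_0$, i.e. only when $\{\Box\beta \mid \beta\in\Gamma_0\}\vdash_{S4}\Box\alpha$. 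Under this discipline the witness pool $W_0=\{w\in\mathcal{L}'_{S4} \mid w(\beta)=2 \text{ for all }\beta\in\Gamma_0\}$ is stable: by Theorem~\ref{thm:sound:S4N} and necessitation every $w\in W_0$ already sends each $\Gamma_0$-forced formula to $2$, so the set of admissible witnesses does not shrink as value-$2$ formulas accumulate, every witness secured early keeps working, and unions of partial' level valuations stay partial' level valuations.

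The heart is the single-formula extension: given $\tilde u\in PLV'(\Delta)$ and a next formula $\alpha$ with immediate subformulas in $\Delta$, produce $c\in\mathcal{V}$ with $\tilde u\cup\{(\alpha,c)\}\in PLV'(\Delta\cup\{\alpha\})$. I would split on whether $\alpha$ is forced to $2$ by $\Gamma_0$. If it is not, I would argue that one may take $c\in\{0,1\}$ compatible with the multioperator entry of Table~\ref{table:nm_S4*-R} determined by $\tilde u$ on the subformulas of $\alpha$; when $c=1$, the witness required for $\alpha$ is obtained by applying the identity $\mathcal{L}'_{S4}=PLV'(For(\Sigma))$ to the level valuation realizing non-forcedness, which sends $\alpha$ to $0$ and agrees with $\tilde u$ on $\Gamma_0$. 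If $\alpha$ is forced to $2$, I would set $c=2$: by completeness (Theorem~\ref{thm:compl:S4N}) and necessitation every $w\in W_0$ has $w(\alpha)=2$, so all previously chosen witnesses survive, and $2$ lies in the relevant entry of Table~\ref{table:nm_S4*-R} because a value-$0$ or value-$1$ assignment to a formula forced to $2$ would violate either the Nmatrix entry or the witness condition over the subformula-closed domain — exactly the mechanism that excludes the bad valuation behind $\Box p\to\Box\neg\neg p$.

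I expect the main obstacle to be precisely this last coordination: reconciling the purely local Nmatrix constraint on $\alpha$ (which sees only $\tilde u$ at the immediate subformulas) with the global witness requirement of $PLV'$ (which ranges over all value-$2$ formulas), and, relatedly, ensuring $W_0\neq\emptyset$ from the $S4$-consistency of $\{\Box\beta \mid \beta\in\Gamma_0\}$. The crux is to verify, case by case over the five tables in Table~\ref{table:nm_S4*-R} and through the dictionary ``value $2$ $\leftrightarrow$ $\vdash_{S4}\Box(\cdot)$ under the hypotheses $\{\Box\beta \mid \beta\in\Gamma_0\}$'', that whenever a value is blocked by the witness condition it is also unavailable or avoidable in the Nmatrix, and conversely; the needed derivabilities are then discharged with Theorems~\ref{thm:sound:S4N} and~\ref{thm:compl:S4N} and Corollary~\ref{valid_S4=2}, mirroring the co-analyticity argument of Lemma~\ref{co-analyticityS4}.
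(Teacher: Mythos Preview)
Your opening reduction---establishing $PLV'(For(\Sigma))=\mathcal{L}'_{S4}$ by induction on the level index---is exactly the paper's treatment of the case $\Lambda=For(\Sigma)$. The extension step, however, is genuinely different. The paper extends $\tilde v_p$ one formula at a time by a purely \emph{local} case analysis: for each new $\alpha$ it inspects the multioperator output at the already-assigned subformula values and, when that output is $\{1,2\}$, checks what the finitely many currently-available witnesses $w^\delta$ (one for each value-$1$ formula $\delta$ in the current domain) do at $\alpha$, assigning $2$ exactly when all of them already give $2$. Your plan instead fixes $\Gamma_0$ once and for all and assigns $2$ to $\alpha$ precisely when $\{\Box\beta:\beta\in\Gamma_0\}\vdash_{S4}\Box\alpha$.

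The global criterion is attractive because, as you note, the witness pool $W_0$ is stable. But there is a gap you underestimate in the ``crux''. You need the rule ``$v(\alpha)=2$ iff $\alpha$ is $\Gamma_0$-forced'' never to conflict with the Nmatrix, and one direction is not routine: you must exclude that some $\alpha$ is forced while the multioperator output computed from $v$'s values on the immediate subformulas lies in $\{\{0\},\{1\}\}$. Concretely, suppose $v(\beta)=1$ was (legitimately) assigned earlier because $\beta$ is not forced, yet every $w\in W_0$ happens to satisfy $w(\beta)=0$; then $\neg\beta$ is forced while $\neg^{\mathcal{A}_{S4}}(1)=\{0\}$, and your rule breaks. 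Ruling this out amounts to showing that for every $\gamma$ already in the domain (including $\Lambda$ itself), $v(\gamma)\in\{0,1\}$ implies some $w\in W_0$ agrees with $v$ at $\gamma$ in a suitable sense. The $PLV'$ clause handles value $1$ immediately, but for value $0$ it is silent; closing that case for formulas already in $\Lambda$ appears to require, in effect, that $\tilde v_p$ extends to some member of $W_0$---which is the analyticity you are proving. The paper's local approach sidesteps this circularity entirely: it never commits to a global ``forced'' predicate, and compatibility with the Nmatrix is secured case by case against the concrete witnesses in hand, whose behaviour at the new formula is checked directly rather than inferred from a derivability condition.
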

\begin{proof} It is an adaptation of the proof of~\cite[Lemma~3.14]{gratz_truth_2022}. In this case, more modifications are required because of the conjunction and disjunction connectives, and by the fact of considering a fixed set $\Lambda$.

Suppose first that $\Lambda=For(\Sigma)$.  By induction on $n \in \omega$, it will be shown that $\tilde v_p \in \mathcal{L}'^{S4}_n$ for every $n$. By Definition~\ref{def:partial_valuationN} of partial valuation, $\tilde v_p \in Val(\mathcal{M}'_{S4})= {\mathcal{L}'}_0^{S4}$. Suppose that $\tilde v_p \in \mathcal{L}'^{S4}_n$ for a given $n \geq 0$, and let $\alpha$ such that $\mathcal{L}'^{S4}_n(\alpha) \subseteq \mathcal{D}$. In particular, $\tilde v_p(\alpha) \in  \mathcal{D}$. Suppose that $\tilde v_p(\alpha)=1$. By Definition~\ref{def:partial_level_val_S4NN} of partial' level valuation,  there exists a level valuation $w$ in $\mathcal{L}'_{S4}$ such that $w(\alpha) = 0$. But this is  a contradiction, since $w \in \mathcal{L}'^{S4}_n$. Then $\tilde v_p(\alpha)=2$ and so $\tilde v_p \in \mathcal{L}'^{S4}_{n+1}$. From this, $\tilde v_p \in \mathcal{L}'_{S4}$ and the result clearly holds by taking $v:=\tilde v_p$.

Suppose now that $\Lambda \neq \emptyset $ is a proper subset of $For(\Sigma)$.
Consider  an enumeration of $For(\Sigma)$ as in Remark~\ref{enum:forS4}.
The valuation $v$ will be defined by induction on the complexity $n$ of the formulas. 
More precisely, with terminology as in Definition~\ref{sets:enum:forS4} and from the observations in Remark~\ref{enum2:forS4}, an extension $v_n^m$ of $\tilde v_p$ (i.e.,  $\tilde v_p \subseteq v_n^m$) will be defined for every $(n,w) \in \omega\times \omega$ such that $v_n^m \in PLV'({\Lambda}_n^m)$ and $v_i^j \subseteq v_n^{m}$ if $(i,j) \leq (n,m)$, where: $(i,j) \leq (n,m)$ iff $i \leq n$ or $i=n$ and $j \leq m$ (and where $\omega \times \omega$ denotes, as usual, the Cartesian product of $\omega$ with itself). \\[1mm]
{\bf Base} $n=m=0$: Observe that ${\Lambda}_0={\Lambda}_0^0=\Lambda \cup \mathcal{P}$.
Define $v_0^0(\alpha)= \tilde v_p(\alpha)$ if $\alpha \in \Lambda$, and $v_0^0(p)=0$ if $p$ is a propositional variable which does not belong to $\Lambda$. Clearly, $\tilde v_p \subseteq v_0^0$ and $v_0^0 \in PLV'({\Lambda}_0)=PLV'({\Lambda}_0^0)$. Let $v_0:=v_0^0$.\\[1mm]
{\bf Inductive step}: Assume that $v_i^j(\alpha)$ was defined for every $\alpha \in {\Lambda}_i^j$ such that  $v_i^j(\alpha)= \tilde v_p(\alpha)$ if $\alpha \in \Lambda$, $v_i^j \in PLV'({\Lambda}_i^j)$, $v_i^j \subseteq v_k^r$ if $(i,j) \leq (k,r) \leq (n,m)$, for given $n,m \geq 0$ (Induction Hypothesis, IH). Now it will be shown how to extend $v_n^m$ to a function $v_n^{m+1}$ with domain ${\Lambda}_n^{m+1}={\Lambda}_n^m \cup \{\alpha_{\omega\cdot (n+1) +m}\}$. That is, it will be shown how to define a value for $\alpha_{\omega\cdot (n+1) +m}$ while preserving the values assigned by $v_n^m$, in such a manner that $v_n^{m+1}\in PLV'({\Lambda}_n^{m+1})$. To start with, define $v_n^{m+1}(\beta)=v_n^m(\beta)$ for every $\beta \in {\Lambda}_n^m$.

Let $\alpha=\alpha_{\omega\cdot (n+1) +m}$. If $\alpha \in \Lambda$ then ${\Lambda}_n^{m+1}={\Lambda}_n^m$. Then  $v_n^{m+1}:=v_n^m$, and the required hypothesis are obviously satisfied by $v_n^{m+1}$. Suppose now that  $\alpha \not\in \Lambda$. Observe that $v_n^{m+1}(\beta)=v_n^{m}(\beta)$ was already defined, for every strict subformula $\beta$ of $\alpha$. 
Since $v_n^m \in PLV'({\Lambda}_n^m)$ then, for every $\delta \in {\Lambda}_n^m$ such that $v_n^m(\delta)=1$ there exists $w   \in  \mathcal{L}'_{S4}$ such that $\mathsf{P}^4_{{\Lambda}_n^m}(v_n^m,w,\delta)$. Such a $w$ will be denoted by $w^\delta$ (observe that it is possible to have more than one $w^\delta$ for each $\delta$, and it is possible to have $w^\delta=w^\gamma$ for $\delta \neq\gamma$). Recall that ${\Lambda}_n^m$ is closed under subformulas.
There are three cases to analyze:\\[1mm]
(1) $\alpha=\#\beta$ such that $\#^{S4} v_n^{m}(\beta)=\{0\}$ (for some $\# \in \{\neg,\Box\}$), or $\alpha=\beta\#\gamma$ such that $\#'^{S4} (v_n^{m}(\beta),v_n^{m}(\gamma))=\{0\}$ (for some $\# \in \{\land,\vee,\to\}$). Then, define $v_n^{m+1}(\alpha)=0$. Observe that $v_n^{m+1} \in PLV'({\Lambda}_n^{m+1})$.\\[1mm]
(2) $\alpha=\#\beta$ such that $\#^{S4} v_n^{m}(\beta) \in \mathcal{D}$ (for some $\# \in \{\neg,\Box\}$), or $\alpha=\beta\#\gamma$ such that $\#'^{S4} (v_n^{m}(\beta),v_n^{m}(\gamma)) \in \mathcal{D}$ (for some $\# \in \{\vee,\to\}$). There are three subcases to analyze:\\[1mm]
(2.1) There is $\delta \in{\Lambda}_n^m$ with $v_n^{m}(\delta)=1$ and there exists some $w^\delta$ such that $w^\delta(\alpha)=0$. Define $v_n^{m+1}(\alpha)=1$. Hence $v_n^{m+1} \in PLV'(\Lambda_n^{m+1})$ such that $w^\alpha=w^\delta$.\\[1mm]
(2.2) There is $\delta \in \Lambda_n^m$ with $v_n^m(\delta)=1$ and there exists some $w^\delta$ such that $w^\delta(\alpha)=1$. By Lemma~\ref{co-analyticityS4}, $v'=w^\delta_{|\Lambda_n^m} \in PLV'(\Lambda_n^m)$, hence there exists $w'' \in  \mathcal{L}'_{S4}$ such that $\mathsf{P}^4_{\Lambda_n^m}(v',w'',\alpha)$.  Define $v_n^{m+1}(\alpha)=1$. Hence, $v_n^{m+1} \in PLV'(\Lambda_n^{m+1})$ such that $w^\alpha=w''$.\\[1mm]
(2.3) For every $\delta \in \Lambda_n^m$ such that $v_n^m(\delta)=1$, $w^\delta(\alpha)=2$ for every $w^\delta$. Define $v_n^{m+1}(\alpha)=2$. Clearly,  $v_n^{m+1} \in PLV'(\Lambda_n^{m+1})$.\\[1mm]
(3) $\alpha=\beta\#\gamma$ such that $\#'^{S4} (v_n^{m}(\beta),v_n^{m}(\gamma)) =\{1\}$ (for some $\# \in \{\land,\to\}$). There are two subcases to analyze:\\[1mm]
(3.1) $\alpha=\beta\to\gamma$. Then $v_n^{m}(\beta)=2$ and $v_n^{m}(\gamma)=1$. Define $v_n^{m+1}(\alpha)=1$. Then  $v_n^{m+1} \in PLV'(\Lambda_n^{m+1})$ such that $w^\alpha=w^\gamma$. Indeed, since $w^\gamma(\beta)=2$ and $w^\gamma(\gamma)=0$ then $w^\gamma(\alpha)=0$. \\[1mm]
(3.2) $\alpha=\beta\land\gamma$. Then $v_n^{m}(\beta)=1$ and $v_n^{m}(\gamma) \in \mathcal{D}$ or vice versa. Then, there exists $w \in  \mathcal{L}'_{S4}$ (where $w=w^\beta$ or  $w=w^\gamma$)  such that $w(\beta)=0$ or $w(\gamma)=0$ and so $w(\alpha)=0$. Define $v_n^{m+1}(\alpha)=1$. Then  $v_n^{m+1} \in PLV'(\Lambda_n^{m+1})$ such that $w^\alpha=w^\beta$ or $w^\alpha=w^\gamma$. \\[1mm]
(4) $\alpha=\Box\beta$ such that $\Box^{S4} v(\beta) =\{2\}$ or $\alpha=\beta\#\gamma$ such that $\#'^{S4} (v_n^{m}(\beta),v_n^{m}(\gamma))  =\{2\}$ (for some $\# \in \{\vee,\land, \to\}$). Define $v_n^{m+1}(\alpha)=2$. It is clear that $v_n^{m+1} \in PLV'(\Lambda_n^{m+1})$.

We have shown in cases (1)-(4) how to extend the domain of $v_n^{m}$ to the additional formula $\alpha_{\omega\cdot (n+1) +m}$, showing that the resulting function $v_n^{m+1}$ is in $PLV'(\Lambda_n^{m+1})$, and $\tilde v_p \subseteq v_i^j \subseteq v_k^r$, for every  $(i,j) \leq (k,r) \leq (n,m+1)$. That is, we show how to increase the superscript $m$. In order to increase the subscript $n$, let $v_{n+1}=v_{n+1}^0 := \bigcup_{m \in \omega} v_n^m$. By the procedure described above, it is immediate to see that $v_{n+1}^0$ is in $PLV'(\bigcup_{m \in \omega} {\Lambda}_n^m) = PLV'({\Lambda}_{n+1})= PLV'({\Lambda}_{n+1}^0)$, and $\tilde v_p \subseteq v_i^j \subseteq v_k^r$, for every  $(i,j) \leq (k,r) \leq (n+1,0)$. By repeating the process, it is possible to define $v_n^m$ for every $(n,m) \in \omega\times \omega$ with the required properties. Finally, let $v:=\bigcup_{n\in \omega} v_n$. From the manner in which the construction was made, it follows that  $v$ is in $PLV'(\bigcup_{n \in \omega}{\Lambda}_{n})= PLV'(For(\Sigma))$, and $\tilde v_p \subseteq v$. By the first part of the proof, we conclude that $v \in  \mathcal{L}'_{S4}$.
\end{proof}

\begin{proposition} \label{prop:analiticityS4}
Let $\Lambda \in FCS(\Sigma)$. Then $PLV(\Lambda)=PLV'(\Lambda)$.
\end{proposition}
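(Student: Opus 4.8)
The plan is to collapse the only real difference between Definitions~\ref{def:partial_level_val_S4N} and~\ref{def:partial_level_val_S4NN} — the source of the witness $w$ — to a statement about restrictions of genuine level valuations, and then argue the two inclusions separately, the delicate one being $PLV(\Lambda)\subseteq PLV'(\Lambda)$.

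First I would record the joint consequence of Lemmas~\ref{co-analyticityS4} and~\ref{analyticityS4}: for $\Lambda \in CS(\Sigma)$ we have $\tilde v_p \in PLV'(\Lambda)$ iff $\tilde v_p = v_{|\Lambda}$ for some $v \in \mathcal{L}'_{S4}$; that is, $PLV'(\Lambda)=R$ where $R:=\{v_{|\Lambda} \mid v \in \mathcal{L}'_{S4}\}$. Since the predicate $\mathsf{P}_\Lambda(\tilde v_p,w,\alpha)$ inspects $w$ only on $\Lambda$, I would read both notions through the monotone operator $G$ on subsets of $PV(\Lambda)$ implicit in Remark~\ref{rem:PLVsN}, namely $G(X)=\{\tilde v_p\in PV(\Lambda)\mid \forall\alpha\,(\tilde v_p(\alpha)=1\Rightarrow \mathsf{P}_\Lambda(\tilde v_p,\tilde w_p,\alpha)\text{ for some }\tilde w_p\in X)\}$. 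Replacing a level-valuation witness $w\in\mathcal{L}'_{S4}$ by its restriction $w_{|\Lambda}\in R=PLV'(\Lambda)$ (legitimate by co-analyticity, Lemma~\ref{co-analyticityS4}) shows $PLV'(\Lambda)\subseteq G(PLV'(\Lambda))$, and the converse inclusion is equally direct, so $PLV'(\Lambda)$ is a fixpoint of $G$. Read coinductively, as the decision procedure "remove every non-supported row" prescribes, $PLV(\Lambda)$ is the greatest fixpoint of $G$.

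With this framing the inclusion $PLV'(\Lambda)\subseteq PLV(\Lambda)$ is immediate: $PLV'(\Lambda)$ is a post-fixpoint of $G$ and the greatest fixpoint contains every post-fixpoint. Concretely, the content is that $PLV'(\Lambda)$ is \emph{self-supporting} — each of its members has, for every formula it sends to $1$, a witness lying again in $PLV'(\Lambda)$, namely the $\Lambda$-restriction of the level valuation supplied by Definition~\ref{def:partial_level_val_S4NN}.

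The reverse inclusion $PLV(\Lambda)\subseteq PLV'(\Lambda)$ is the main obstacle. A member $\tilde v_p$ of $PLV(\Lambda)$ comes equipped, for each $\alpha$ with $\tilde v_p(\alpha)=1$, only with a \emph{finite} witness $\tilde w_p\in PLV(\Lambda)$; to conclude $\tilde v_p\in PLV'(\Lambda)$ one must upgrade $\tilde w_p$ to a genuine level valuation via analyticity (Lemma~\ref{analyticityS4}), which presupposes $\tilde w_p\in PLV'(\Lambda)$ — exactly what is being proved. The naive argument is thus circular, and the circularity is genuine, since the witness graph on the finite set $PLV(\Lambda)$ may contain cycles (this is precisely the greatest-fixpoint character of Definition~\ref{def:partial_level_val_S4N}). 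I would break it by proving that $PLV'(\Lambda)$ is \emph{itself} the greatest fixpoint of $G$, i.e.\ that every self-supporting family $X\subseteq G(X)$ satisfies $X\subseteq R$. This is an ``analyticity for families'' strengthening of Lemma~\ref{analyticityS4}: rerun its induction along the complexity enumeration of Remark~\ref{enum:forS4}, but extend all members of $X$ to full valuations \emph{simultaneously}, using the finite $X$-witnesses — whose own extensions are being built in tandem — in place of the level-valuation witnesses $w^\delta$ appearing there, and verify that each stage remains a partial' level valuation, so that each limit lands in $\mathcal{L}'_{S4}$. Granting this, $PLV(\Lambda)=\mathrm{gfp}(G)\subseteq R=PLV'(\Lambda)$, and with the first inclusion the Proposition follows. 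The single point demanding care is carrying the extension coherently across a possibly cyclic witness graph; if one prefers to avoid the simultaneous construction, the alternative is to seek a well-founded measure on the witness relation (for instance, witnesses that strictly enlarge the set $\{\beta\in\Lambda\mid \tilde v_p(\beta)=2\}$, or keep it fixed while strictly reducing the number of $1$'s), which — should such a choice of witnesses always be available — would make the witness graph well-founded, collapse the least and greatest fixpoints of $G$, and let a single induction promote finite witnesses to level valuations.
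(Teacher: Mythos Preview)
Your setup is exactly the paper's: using co-analyticity (Lemma~\ref{co-analyticityS4}) to restrict level-valuation witnesses and analyticity (Lemma~\ref{analyticityS4}) to extend $PLV'$-witnesses back, you show that $PLV'(\Lambda)$ satisfies the recursive equation $PLV'(\Lambda)=G(PLV'(\Lambda))$. This is the entire content of the paper's argument, which then closes with the single line ``From this, being $PV(\Lambda)$ finite, it follows that $PLV(\Lambda)=PLV'(\Lambda)$.'' You are right that this last inference is not automatic: finiteness of $PV(\Lambda)$ guarantees that the monotone operator $G$ has fixpoints and that the downward iteration from $PV(\Lambda)$ stabilises at the greatest one, but it does not by itself force uniqueness of fixpoints, so merely matching the defining equation does not yield equality. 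Your proposal to read $PLV(\Lambda)$ as the greatest fixpoint and then argue that $PLV'(\Lambda)$ is in fact greatest---via a simultaneous-extension strengthening of Lemma~\ref{analyticityS4}, or via a well-founded choice of witnesses---addresses precisely the step the paper leaves implicit; the paper supplies no such argument.

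In short, you have recovered the paper's proof and gone beyond it in rigour at the one point where the paper hand-waves. Be aware, though, that neither of your two proposed completions is routine: the simultaneous extension must assign $v(\alpha_m)$ for all members of $X$ at once while the very witnesses whose value at $\alpha_m$ governs that choice are being extended in the same step (so the case split (2.1)--(2.3) of Lemma~\ref{analyticityS4} no longer has ready-made data to consult), and the well-foundedness of the witness relation under your suggested measure is not obvious, since a witness preserves the $2$-set but may trade the $1$ at $\alpha$ for fresh $1$'s elsewhere. If you want a finished proof you will need to carry one of these out; the paper does not.
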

\begin{proof}
Let  $\tilde v_p \in PLV'(\Lambda)$. Then,  $\tilde v_p \in PV(\Lambda)$. Suppose that $\alpha \in \Lambda$ such that $\tilde v_p(\alpha)=1$.   Then, there exists  $w \in \mathcal{L}'_{S4}$ such that $\mathsf{P}^4_\Lambda(\tilde v_p,w,\alpha)$. Let $\tilde w_p=w_{|\Lambda}$. By Lemma~\ref{co-analyticityS4}, $\tilde w_p \in PLV'(\Lambda)$ such that $\mathsf{P}^4_\Lambda(\tilde v_p,\tilde w_p,\alpha)$. Now, suppose that there exists $\tilde w'_p\in PLV'(\Lambda)$ such that $\mathsf{P}^4_\Lambda(\tilde v_p,\tilde w'_p,\alpha)$. By Lemma~\ref{analyticityS4}, there exists $v' \in \mathcal{L}'_{S4}$ such that the restriction of $v'$ to $\Lambda$ coincides with $\tilde w'_p $. Hence,  $\mathsf{P}^4_\Lambda(\tilde v_p,v',\alpha)$. This shows that \\[2mm]
$\begin{array}{lll}
PLV'(\Lambda)&=& \{\tilde v_p \in PV(\Lambda) \mid \forall \alpha \in \Lambda \big(\tilde v_p(\alpha) = 1 \mbox{ implies that } \mathsf{P}^4_\Lambda(\tilde v_p,\tilde w_p,\alpha)\\[1mm] 
&&\hspace*{7mm}\mbox{ for some $\tilde w_p \in PLV'(\Lambda)$}\big)\}.
\end{array}$
From this, it follows that  $PLV(\Lambda)=PLV'(\Lambda)$, by Proposition~\ref{algorithm-PLV}.
\end{proof}

%
%The proof of Theorem~\ref{analyticity1} given in~\cite[Theorem~3.15]{gratz_truth_2022} for $\mathcal{R}(\mathcal{M}_{S4})$ can be easily adapted to the reduced RNmatrix $\mathcal{R}(\mathcal{M}'_{S4})$:
%
%\begin{theorem} \label{analyticity2}
%    $\mathcal{R}(\mathcal{M}'_{S4})$ is analytic and co-analytic with PLV'.
%\end{theorem}

\begin{theorem} \label{Theor:sound_compl_S4-2} Let $\varphi \in For(\Sigma)$, and let $\Lambda$ be the set of subformulas of $\varphi$. Then: $\vdash_{S4} \varphi$ if, and only if, for every partial valuation $\tilde v_p \in PLV(\Lambda)$ in $\mathcal{M}'_{S4}$,  $\tilde v_p(\varphi)=2$.
\end{theorem}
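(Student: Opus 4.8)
The plan is to derive this statement as a corollary of the machinery already assembled, rather than to redo any of the inductive work. The key observation is that $\Lambda$, being the (finite) set of subformulas of a single formula $\varphi$, is non-empty (it contains $\varphi$), finite, and closed under subformulas; hence $\Lambda \in FCS(\Sigma) \subseteq CS(\Sigma)$, so the analyticity lemma (Lemma~\ref{analyticityS4}), the co-analyticity lemma (Lemma~\ref{co-analyticityS4}), and the identity $PLV(\Lambda)=PLV'(\Lambda)$ of Proposition~\ref{prop:analiticityS4} all apply to it. I would prove the two directions separately, each by transferring between level valuations in $\mathcal{L}'_{S4}$ and partial level valuations over $\Lambda$.

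For the left-to-right direction, assume $\vdash_{S4}\varphi$ and fix an arbitrary $\tilde v_p \in PLV(\Lambda)$. By Proposition~\ref{prop:analiticityS4} we have $\tilde v_p \in PLV'(\Lambda)$, so the analyticity lemma (Lemma~\ref{analyticityS4}) supplies a full valuation $v \in \mathcal{L}'_{S4}$ with $v_{|\Lambda} = \tilde v_p$. Since $\vdash_{S4}\varphi$, Corollary~\ref{valid_S4=2} forces $v(\varphi)=2$; and as $\varphi \in \Lambda$ we conclude $\tilde v_p(\varphi)=v(\varphi)=2$, as required.

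For the right-to-left direction, assume $\tilde v_p(\varphi)=2$ for every $\tilde v_p \in PLV(\Lambda)$, and aim to apply completeness (Theorem~\ref{thm:compl:S4N} with $\Gamma=\emptyset$); it thus suffices to establish $\models_{\mathcal{R}(\mathcal{M}'_{S4})}\varphi$, i.e. $v(\varphi)\in\mathcal{D}$ for every $v \in \mathcal{L}'_{S4}$. Given such a $v$, the co-analyticity lemma (Lemma~\ref{co-analyticityS4}) yields $v_{|\Lambda} \in PLV'(\Lambda)$, which equals $PLV(\Lambda)$ by Proposition~\ref{prop:analiticityS4}. The hypothesis then gives $v_{|\Lambda}(\varphi)=2$, so $v(\varphi)=2\in\mathcal{D}$. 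As $v$ was arbitrary, $\models_{\mathcal{R}(\mathcal{M}'_{S4})}\varphi$, and Theorem~\ref{thm:compl:S4N} delivers $\vdash_{S4}\varphi$.

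I do not expect a genuine obstacle here: the substantive difficulties have been absorbed into the analyticity and co-analyticity lemmas and into the equality $PLV(\Lambda)=PLV'(\Lambda)$, so this theorem is essentially an assembly of those pieces with soundness/completeness. The only point warranting care is that the statement demands the stronger target value $\tilde v_p(\varphi)=2$ rather than merely $\tilde v_p(\varphi)\in\mathcal{D}$; this is exactly what makes the two directions symmetric, and it is guaranteed by Corollary~\ref{valid_S4=2}, whose proof uses the necessitation rule ($\vdash_{S4}\varphi$ entails $\vdash_{S4}\Box\varphi$) to upgrade membership in $\mathcal{D}$ to the value $2$.
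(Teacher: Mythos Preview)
Your proposal is correct and follows essentially the same route as the paper's own proof: both directions are obtained by shuttling between $PLV(\Lambda)$ and $\mathcal{L}'_{S4}$ via Proposition~\ref{prop:analiticityS4} together with the analyticity and co-analyticity lemmas, then invoking Corollary~\ref{valid_S4=2} for the forward direction and Theorem~\ref{thm:compl:S4N} for the converse. The only cosmetic difference is that the paper also cites Theorem~\ref{thm:sound:S4N} explicitly in the forward direction, whereas you (correctly) note that this is already packaged inside Corollary~\ref{valid_S4=2}.
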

\begin{proof} Let $\Lambda$ be the set of subformulas of $\varphi$. Then, $\Lambda \in FCS(\Sigma)$.
Suppose that $\vdash_{S4} \varphi$. By Theorem~\ref{thm:sound:S4N}, $\models_{\mathcal{R}(\mathcal{M}'_{S4})} \varphi$. 
Let $\tilde v_p \in PLV(\Lambda)$. By Proposition~\ref{prop:analiticityS4}, $\tilde v_p \in PLV'(\Lambda)$. By Lemma~\ref{analyticityS4},  there exists $v \in \mathcal{L}'_{S4}$ such that the restriction $v_{|\Lambda}$ of $v$ to $\Lambda$ coincides with $\tilde v_p$. Then, $\tilde v_p(\varphi)=v(\varphi)=2$, by Corollary~\ref{valid_S4=2}.

Conversely, suppose that  $\tilde v_p(\varphi)=2$ for every  $\tilde v_p \in PLV(\Lambda)$. By Proposition~\ref{prop:analiticityS4},  $\tilde v_p(\varphi)=2$ for every  $\tilde v_p \in PLV'(\Lambda)$. Let $v \in \mathcal{L}'_{S4}$. By Lemma~\ref{co-analyticityS4}, the restriction $\tilde v_p:=v_{|\Lambda}$ of $v$ to $\Lambda$ belongs to $PLV'(\Lambda)$. Then, $v(\varphi)=\tilde v_p(\varphi)=2$ and then $\models_{\mathcal{R}(\mathcal{M}'_{S4})} \varphi$. By Theorem~\ref{thm:compl:S4N}, $\vdash_{S4} \varphi$.
\end{proof}

%\begin{corollary} \label{coro:sound_compl_S4-2} For every $\varphi$: $\vdash_{S4} \varphi$ if, and only if, for every partial valuation $v_p$ in $\mathcal{M}'_{S4}$ defined over the set $\Delta$ of subformulas of $\varphi$: if $v_p \in PLV'$ then $v_p(\alpha) \in \mathcal{D}$.
%\end{corollary}

This means that Gr\"atz's algorithm can be adapted to the reduced (and expanded with disjunction and conjunction)  RNmatrix $\mathcal{R}(\mathcal{M}'_{S4})$.

\section{First steps towards an RNmatrix  for \ipl}\label{Sect:IPLfirstStep}

As mentioned in Section~\ref{sect:Intro}, Kearns's level valuations fail to provide an effective decision procedure for \sfo, while Gr\"atz method produces an algorithm to decide that logic. Clearly, the composition of  G\"odel's translation from \ipl\ to \sfo\ with Gr\"atz's decision procedure for \sfo\ naturally produces a decision procedure for \ipl. 
% In what follows, a decidable RNmatrix for \ipl\ will be constructed, taking as an inspiration  the composition of the two algorithms above mentioned, but defined in a direct way. In particular, soundness and completeness of the method will be proved without relying in the corresponding results of the two previous constructions.

Let $\Sigma= (\{\neg,\Box\},\{\land,\vee,\to\})$ be the full signature of \sfo, and let $\Omega = (\{ \neg \}, \{ \land, \lor, \rightarrow \} )$ be the signature for intuitionistic propositional logic \ipl. 

\begin{definition} \label{complex-fbf-IPL}
The {\em complexity} $\co(\alpha)$ of a formula $\alpha \in For(\Omega)$ is defined as follows:  $\co(p)=0$ if $p$ is  a propositional variable; $\co(\neg \alpha)= \co(\alpha)+1$; and $\co(\alpha \# \beta)=\co(\alpha)+\co(\beta)+1$, for $\# \in \{\to,\vee,\land\}$.
\end{definition}

Recall the G\"odel-Tarski-McKinsey Box translation $t:For(\Omega) \rightarrow For(\Sigma)$ defined as follows:

    \begin{enumerate}[label=\roman*)]
        \item $t(p) = \Box p$, if $p$ is a propositional variable;
        \item $t(\neg \alpha) = \Box \neg t(\alpha)$;
        \item $t(\alpha \# \beta) = \Box (t(\alpha) \# t(\beta))$, where $\# \in \{ \rightarrow, \lor, \land \}$.
     \end{enumerate}

It is widely known that this translation is faithful (see~\cite{McKinsey:Tarsk:48}; for a more recent proof see~\cite{mints2012godel}), that is:

\begin{theorem}[\cite{McKinsey:Tarsk:48}] \label{teor-McKinsey:Tarski} Let $\varphi$ be a formula in $For(\Omega)$. Then: $\varphi$ is valid in \ipl\ if and only if $t(\varphi)$ is valid in \sfo.
\end{theorem}

As seen in Section~\ref{sect:redu_RNmat_S4}, Gr\"atz's results and algorithm can be extended to \sfo\ defined over $\Sigma$, now  based on  $\mathcal{M}'_{S4}$.
Then, by Theorem~\ref{teor-McKinsey:Tarski} and by Theorem~\ref{Theor:sound_compl_S4-2}, a formula $\alpha$ in $For(\Omega)$  is valid in \ipl\ iff the reduced truth-table  obtained by taking the partial valuations in $\mathcal{M}'_{S4}$ for the set of subformulas of the formula $t(\alpha)$ in $For(\Sigma)$ shows that $t(\alpha)$ is a valid formula in \sfo. This produces obviously a decision procedure for \ipl, based on an RNmatrix for \sfo\ and a translation between \ipl\ and \sfo. It is a challenging question how to define a direct decision procedure for \ipl\ based on a finite-valued RNmatrix for \ipl, with level valuations (and with an associate truth-tables decision procedure) specifically designed for this logic. Moreover, the soundness and completeness of the RNmatrix (and the decision procedure obtained from it) should not rely on the corresponding soundness and completeness results of G\"odel's~\cite{Godel1933} and  Gr\"atz's constructions. A solution to this question will be given in the next two sections.

%\todo[inline]{it is unclear why the table of $\to$ is not identical to the previous one.}

% Mencionar rapidamente a traducao de Godel, a pre-traducao e explicar como seriam os valores de verdade e os multioperadores (analiticamente). Explicar que o algoritmo de Gratz vai dar o criterio para eliminar linhas das nossas tabelas, mas que dado que nao tem regra de necessitacao em IPL, a estrategia para definir as level valuations  para IPL deve ser por for\c ca diferente. Dado que as level valuations vao corresponder a  conjuntos $\varphi$-saturados, faz sentido definir os levels das valuations por complexidade das formulas, como se uma tabela de verdade infinita estivesse sendo construida indutivamente para o conjunto de todas as formulas. Esse fato (a relacao com os $\varphi$-saturados) tambem vai sugerir que no nivel inicial nem todas as valoracoes deverao ser consideradas, mas apenas as que dao valores classicos \`as variaveis proposicionais. Isso vai simplificar o algoritmo e a sua complexidade, como sera discutido nas secoes finais.

\section{An RNmatrix  for \ipl} \label{RNmatrix-IPL}

This section describes a new semantics for \ipl\ based on a 3-valued RNmatrix with a very intuitive interpretation. We obtain this semantics by abstracting the composed procedure for \ipl\ mentioned at the end of the previous section but defined in a direct way. 
That is, we describe a `pure' RNmatrix for \ipl\ (i.e., one that does not depend on any other construction), which turns out to be a suitable semantics for \ipl. Moreover, Section~\ref{tables-IPL} presents an algorithm for deciding validity in \ipl\ by removing spurious rows from the truth-tables generated by the procedure. Hence, the soundness and completeness of the semantics and the associated decision procedure are proven independently of G\"odel's 1933 results (proved in~\cite{McKinsey:Tarsk:48}) and those in~\cite{gratz_truth_2022}.

%\begin{definition}
%    Consider the signature $\Omega = \{ \{ \neg \}, \{ \land, \lor, \rightarrow \} \}$ for intuitionistic propositional logic (\textbf{IPL}). The set of intuitionistic formulas will be denoted by as $For(\Omega)$.
%\end{definition}

\subsection{Inducing the Nmatrix $\mathcal{M}_{IPL}$ for \ipl}

The truth-tables for intuitionistic logic can be defined based on the  Box translation $t$ described above, combined with a semi-translation $t_0$ to be defined now.

\begin{definition}\label{def:h_function}
    The {\em semi-translation function} $t_0 : For(\Omega) \rightarrow For(\Sigma)$ is defined as follows:
    \begin{enumerate}[label=\roman*)]
        \item $t_0(p) = p$, if $p$ is a propositional variable;
        \item $t_0(\neg \alpha) = \neg t(\alpha)$;
        \item $t_0(\alpha \# \beta) = t(\alpha) \# t(\beta)$, where $\# \in \{ \rightarrow, \lor, \land \}$.
    \end{enumerate}
\end{definition}

\begin{lemma}\label{lemma:f_to_h}
    For every $\alpha \in For(\Omega)$, $t(\alpha) = \Box(t_0(\alpha))$.
\end{lemma}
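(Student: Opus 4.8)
The plan is to prove $f_{\Box}(\alpha) = \Box(h(\alpha))$ by a straightforward induction on the structure of $\alpha \in For(\Omega)$, comparing the two recursive definitions case by case. The key observation driving the proof is that both $f_{\Box}$ and $h$ are defined by the same kind of recursion on the signature $\Omega = \{\{\neg\}, \{\land,\lor,\rightarrow\}\}$, and that the clauses of $f_{\Box}$ (Definition~\ref{def:box_translation}) each place an outermost $\Box$ on top of exactly the expression that $h$ (Definition~\ref{def:h_function}) produces without that leading $\Box$. So the identity should fall out immediately once the cases are matched up; no induction hypothesis on proper subformulas is even genuinely needed, since the recursive clauses of $f_\Box$ and $h$ agree beneath the top $\Box$.

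First I would handle the atomic case: if $\alpha = p$ is a propositional variable, then $h(p) = p$ by clause i) of Definition~\ref{def:h_function}, so $\Box(h(p)) = \Box p = f_{\Box}(p)$ by clause i) of Definition~\ref{def:box_translation}. Next, for the negation case $\alpha = \neg\beta$, clause ii) of Definition~\ref{def:h_function} gives $h(\neg\beta) = \neg f_{\Box}(\beta)$, whence $\Box(h(\neg\beta)) = \Box\neg f_{\Box}(\beta)$, which is exactly $f_{\Box}(\neg\beta)$ by clause ii) of Definition~\ref{def:box_translation}. Finally, for the binary connectives $\# \in \{\rightarrow,\lor,\land\}$ with $\alpha = \beta \# \gamma$, clause iii) of Definition~\ref{def:h_function} yields $h(\beta \# \gamma) = f_{\Box}(\beta) \# f_{\Box}(\gamma)$, so $\Box(h(\beta \# \gamma)) = \Box(f_{\Box}(\beta) \# f_{\Box}(\gamma))$, and this coincides with $f_{\Box}(\beta \# \gamma)$ by clause iii) of Definition~\ref{def:box_translation}. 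These three cases exhaust the formation rules of $For(\Omega)$, completing the proof.

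I do not anticipate any real obstacle here: the statement is essentially a bookkeeping identity relating two mutually recursive definitions, and the verification is purely syntactic. The only point requiring minimal care is to confirm that the clauses of $h$ and $f_\Box$ are aligned so that $h$ reproduces $f_\Box$ with the outermost $\Box$ stripped off in every case, which the case analysis above makes transparent. In particular, notice that the recursive calls inside $h$'s clauses ii) and iii) invoke $f_\Box$ directly (not $h$), so the identity holds without appeal to an inductive hypothesis, and even the atomic base case is an immediate unfolding of definitions rather than a genuine induction; one may equally well present it as a direct definitional check over the three formation cases.
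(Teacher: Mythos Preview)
Your proposal is correct and mirrors the paper's own proof essentially verbatim: both proceed by a case analysis over the three formation rules of $For(\Omega)$, unfolding Definitions~\ref{def:box_translation} and~\ref{def:h_function} in each case to see that $f_\Box$ places an outermost $\Box$ on exactly what $h$ produces. Your additional remark that no inductive hypothesis is genuinely needed (because $h$'s non-atomic clauses call $f_\Box$ directly rather than $h$) is a nice clarification that the paper leaves implicit.
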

\begin{proof}
Straightforward, by induction on the complexity of $\alpha$.
\end{proof}

\begin{remark}\label{remark:box_s4_deterministic}
Observe that the interpretation of $\Box$ in $\mathcal{M}'_{S4}$ is deterministic  (see Table~\ref{table:nm_S4*-R}). Let us denote by $\Box^4(a)$ the only element of ${\Box'}^{S4}(a)$, for $a \in \mathcal{V}$. Hence, if $v$ is a valuation in $\mathcal{M}'_{S4}$ then $v(\Box \alpha) = \Box^{4}(v(\alpha))$, for every $\alpha \in For(\Sigma)$.  By Lemma~\ref{lemma:f_to_h}, $v(t(\alpha)) = \Box^{4}(v(t_0(\alpha)))$, for every $\alpha \in For(\Omega)$.
\end{remark}

To check the validity in \ipl\ of $\alpha \in For(\Omega)$, one must construct a truth-table in $\mathcal{M}'_{S4}$ for $t(\alpha) \in For(\Sigma)$. Assume that the set of subformulas of $\alpha$ is arranged as a sequence $\alpha_1,\ldots,\alpha_n=\alpha$ in non-decreasing order of complexity (the complexity of formulas in $For(\Omega)$  is defined as usual, see Definition~\ref{complex-fbf-IPL} above).  

%\todo[inline]{I recommend moving Definition 8.10 to an earlier part of the paper so that when you refer to it here, it is already defined.}

But then, the columns of the  truth-table for  $t(\alpha)$ in $\mathcal{M}'_{S4}$ can be organized in pairs of formulas of the form $\langle t_0(\alpha_1);t(\alpha_1)\rangle, \ldots , \langle t_0(\alpha_n);t(\alpha_n)\rangle= \langle t_0(\alpha);t(\alpha)\rangle$.  Indeed, the sequence of formulas $ t_0(\alpha_1),t(\alpha_1),t_0(\alpha_2), \ldots , t_0(\alpha_n),t(\alpha_n)=t(\alpha)$ in $For(\Sigma)$ induced by the sequence of pairs of formulas is the sequence of formulas that can be considered to construct a truth-table for $t(\alpha)$.\footnote{Rigorously speaking, if $\alpha$ depends on the propositional variables $p_1,\ldots,p_k$ then usually it is considered $\alpha_i=p_i$ (for $1 \leq i \leq k$), and then $t(\alpha_i)=\Box p_i$ is placed at a latter stage, say $\alpha_{i+k}$. Hence, the first $2k$ columns of a `standard' table for $t(\alpha)$ starts with formulas $p_1,\ldots,p_k,\Box p_1, \ldots \Box p_k$ instead of $p_1,\Box p_1,p_2,\Box p_2,\ldots p_k,\Box p_k$. After this, the pairs produce the final columns of a table for $t(\alpha)$.} For instance, in order to check $\alpha=p \vee \neg p$ it is considered the sequence of pairs $\langle p;\Box p\rangle$, $\langle \neg \Box p; \Box \neg \Box p\rangle$, $\langle \Box p \lor \Box \neg \Box p; \Box (\Box p \lor \Box \neg \Box p)\rangle$ based on the sequence $p,\neg p, p \vee \neg p$ for $\alpha$. Observe that $t(\alpha)=\Box (\Box p \lor \Box \neg \Box p)$.

This suggests considering for \ipl\ an Nmatrix (derived from $\mathcal{M}'_{S4}$) with domain  $\left\{ \langle c; \Box^{4}(c) \rangle \mid c \in \mathcal{V} \right\} = \{ \langle 2; 2 \rangle, \langle 1; 0 \rangle, \langle 0; 0 \rangle\}$ and multioperations defined as follows:

 \begin{enumerate}
        \item $\neg^{IPL} (\langle a; b \rangle) = \left\{ \langle c; \Box^{4}(c) \rangle \mid c \in {\neg'}^{S4}(b) \right\}$
        \item $\#^{IPL} (\langle a; b \rangle, \langle c; d \rangle) = \left\{ \langle e; \Box^{4}(e) \rangle\mid e \in {\#'}^{S4}(b, d)  \right\} $
    \end{enumerate}
    where $\# \in \{ \rightarrow, \lor, \land \}$.

Let us now consider a label for the truth-values above as follows:  $\bT := \langle 2; 2 \rangle$, $\bU := \langle 1; 0 \rangle$ and $\bF := \langle 0; 0 \rangle$. The observations above lead us to the following:

\begin{definition}[Intuitionistic Nmatrix  $\mathcal{M}_{IPL}$] \label{def:M_IPL} The intuitionistic Nmatrix over signature $\Omega$ is  given by $\mathcal{M}_{IPL} = \langle  V_{IPL}, \mathcal{D}_{IPL}, \mathcal{O}_{IPL} \rangle$, where $V_{IPL}  = \{ \bT, \bU,\bF \}$, $\mathcal{D}_{IPL} = \{ \bT \}$, and  $\mathcal{O}_{IPL}(\#)=\#^{IPL}$ for each connective $\#$ in $\Omega$  is defined as in Figure~\ref{table:nm_IPL}.
\end{definition}

\begin{figure}[h!]
    \centering
    \includegraphics[scale=0.5]{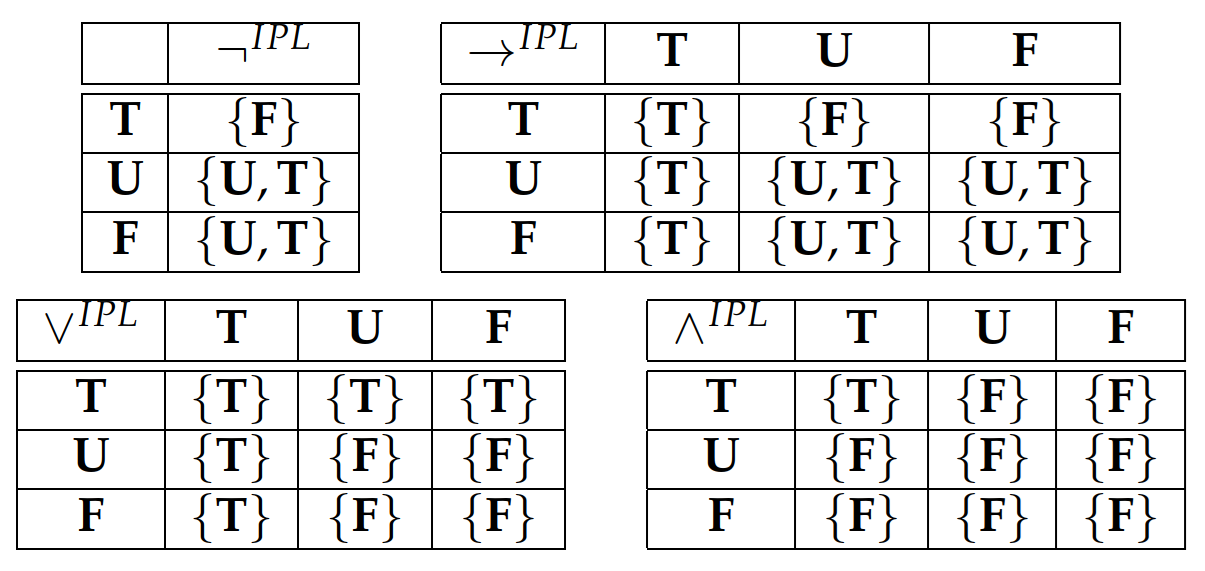}
    \caption{Tables for $\mathcal{M}_{IPL}$.}\label{table:nm_IPL}
\end{figure}

Observe that, in a given valuation over  $\mathcal{M}_{IPL}$, a formula is false when receives a non-designated value, $\bU$ or $\bF$. By looking at the tables of the Nmatrix, apparently both values have the same behavior. But it is not the case.

The first thing to note in Figure~\ref{table:nm_IPL} is that the truth-value $\bU$, which intuitively means that the epistemic status of $\alpha$ is {\em unknown}, only appears in the definitions of $\neg^{IPL}$ and $\to^{IPL}$. As it is going to be more detailed in the completeness proof, the intended meaning is the following (see also Remark~\ref{ren:canval} below):

\begin{description}
    \item[$v(\neg \alpha) = \bU$] only happens when both $\neg \alpha$ and  $\alpha$ are false; that is, when there is no proof of (or knowledge or information about) neither $\alpha$ nor $\neg\alpha$, hence excluded middle fails for $\alpha$ (intuitionistic, non-classical scenario);
    \item[$v(\neg \alpha) = \bF$] means that $\neg \alpha$ is false, but  $\alpha$ is true; that is, although $\neg \alpha$ is unknown, $\alpha$ is known, therefore excluded middle holds for $\alpha$ (classical scenario);
    \item[$v(\alpha \to \beta) = \bU$] only happens when both  $\alpha$ and $\alpha \to\beta$ are false; that is, there is  no proof of (or knowledge or information about) neither $\alpha$ nor $\alpha \to\beta$, hence Peirce's law $\alpha \vee (\alpha \to\beta)$ fails for $\alpha$ and $\alpha \to\beta$ (intuitionistic, non-classical scenario);
    \item[$v(\alpha \to \beta) = \bF$] means that $\alpha \to \beta$ is false, but  $\alpha$ is true; that is, although $\alpha \to \beta$ is unknown, $\alpha$ is known, therefore Peirce's law holds for $\alpha$ and $\alpha \to\beta$ (classical scenario).
\end{description}

In turn, the behavior of conjunction and disjunction is `classical' (in terms of designated/non-designated values), and that is to be expected: the inference rules for conjunction and disjunction are identical in classical and intuitionistic logic. Because of this, $\bU$  never appears as a value assigned to a conjunction or a disjunction.

The third point to note is that, if we disregard the truth-value $\bU$, the remaining Nmatrix corresponds exactly to classical logic. In fact, it is straightforward to see how classical logic can be fully recovered from the intuitionistic Nmatrix. This recovery can be achieved by refining $\mathcal{M}_{IPL}$ (in the sense of~\cite{avron2019rexpansions} and~\cite{caleiro2021axioms}) to the truth-values ${\bT, \bF}$. The resulting is an Nmatrix with zero degree of non-determinism, equivalent to the standard (deterministic) matrix for classical logic.

%\todo[inline]{the comment at the bottom of the page regarding the recoverability of classical logic sounds related to papers [0,-1]. Specifically it sounds like the refinement operation from those papers.}

\subsection{Level valuations for $\mathcal{M}_{IPL}$}

The following step is the definition of a suitable set of valuations (level valuations) over the Nmatrix for \ipl\ just defined. The construction of the sets of level valuations given by  Kearns and Gr\"atz is based on the idea of {\em super designated truth-values}, that is: designated values $a$ such that any element of $\tilde{\Box}(a)$ is still designated, where $\tilde{\Box}$ is the multioperator associated to $\Box$. Thus, if a formula gets a designated value for every $n$th-level valuation, it must receive a super-designated value in every $(n+1)$th-level valuation. This guarantees the validity of the {\em Necessitation} inference rule, level by level. In this way, if a formula is provable in \sfo\ with $n$ applications of the necessitation rule, then it is valid in $\mathcal{L}^{S4}_n$. From this, soundness of syntactical proofs in \sfo\ w.r.t. the RNmatrix semantics follows easily. {\em Necessitation} is usually called a {\em global} inference rule: in order to be sound, the hypothesis of the rule must be provable, not merely assumed. However, \ipl\ has no global inference rules in its usual Hilbert-style presentation (being {\em Modus Ponens} the only inference rule, which is {\em local} in opposition to global).

Taking into account that we are abstracting the composition of the Box translation with Gr\"atz's truth-tables algorithm for \sfo, the set of $n$th-level valuations will be constructed based on partial level valuations (Definition~\ref{def:partial_level_val_S4}) instead of the Kearns-like Definition~\ref{def:level_valuations_2} of level valuations.  Thus, the requirement that every valuation (row) $v_p$ such $v_p(\alpha)=1$ must be supported by another valuation (row) $w_p$ with $w_p(\alpha)=0$ while preserving the values $2$ can be easily adapted to the present framework, by changing $1$, $0$ and $2$ by $\bU$, $\bF$ and $\bT$, respectively (see Definition~\ref{def:level_valuations_IPL} below). 

However, as observed above, there is no counterpart to {\em Necessitation} rule in the usual Hilbert-style calculus for \ipl, therefore the level valuations seem, in principle, unrelated to syntactical proofs in \ipl. The key is to consider a standard sequent calculus presentation for \ipl, and so it can be established, in a similar way to the case of \sfo, a relationship between syntactical proofs in \ipl\ and $nth$-level valuations: if a sequent is proved in \ipl\  with a derivation of height $n$ then it is valid w.r.t. $nth$-level valuations for \ipl\ (see Proposition~\ref{seq-rules-sound}). Using this, the soundness of the sequent calculus for \ipl\ w.r.t. its RNmatrix semantics can be easily obtained.

In addition, intuitionistic level valuations have a very interesting interpretation. Indeed, a suitable and very natural $3$-valued characteristic function associated to $\varphi$-saturated sets in \ipl\ will be introduced in  Definition~\ref{def:val-Delta-IPL}. Hence, in Proposition~\ref{prop:val-Delta-IPL-level} it will be proven that these characteristic functions correspond to level valuations.

The relationship between level valuations and $3$-valued characteristic functions of $\varphi$-saturated sets also has an interesting consequence in terms of complexity: at level $0$ (and hence at any level) the valuations can only assign the classical truth-values $\bT$ and $\bF$ to propositional variables. Indeed, atomic formulas contain neither negation nor implication connectives, so it makes no sense to assign them the value $\bU$, which, as discussed above, is exclusively related to the failure of the law of excluded middle and Peirce's law. This feature will reduce the complexity of the algorithm, as we will be discuss in Section~\ref{complexity}.

%Explicar que o algoritmo de Gratz vai dar o criterio para eliminar linhas das nossas tabelas, mas que dado que nao tem regra de necessitacao em IPL, a estrategia para definir as level valuations  para IPL deve ser por for\c ca diferente. Dado que as level valuations vao corresponder a  conjuntos $\varphi$-saturados, faz sentido definir os levels das valuations por complexidade das formulas, como se uma tabela de verdade infinita estivesse sendo construida indutivamente para o conjunto de todas as formulas. Esse fato (a relacao com os $\varphi$-saturados) tambem vai sugerir que no nivel inicial nem todas as valoracoes deverao ser consideradas, mas apenas as que dao valores classicos \`as variaveis proposicionais. Isso vai simplificar o algoritmo e a sua complexidade, como sera discutido nas secoes finais.

\begin{definition}\label{def:0-level_valuations_IPL}
Let  $Val^{nd}(\mathcal{M}_{IPL})$ be the set of  valuations over $\mathcal{M}_{IPL}$, and let $Val^{nd}_+(\mathcal{M}_{IPL})$ be the set of valuations $v$ in $Val^{nd}(\mathcal{M}_{IPL})$ such that $v(p) \in\{\bT,\bF\}$ for every propositional variable $p$.
\end{definition}

\begin{definition}\label{def:level_valuations_IPL} (Level valuations for \ipl)
The set of $n$th-level valuations $\mathcal{L}^{IPL}_n$ for \ipl, where $n \in \omega$, is defined as follows: 

    \begin{enumerate}[label=\roman*)]
        \item $\mathcal{L}_0^{IPL} = Val^{nd}_+(\mathcal{M}_{IPL})$;
        \item $\mathcal{L}^{IPL}_{n + 1} = \{ v \in \mathcal{L}^{IPL}_n \mid \forall \alpha \in For(\Omega)$, if $v(\alpha)=\bU$ then there exists $w \in \mathcal{L}^{IPL}_n$ such that $w(\alpha)=\bF$ and,  for every $\beta \in  For(\Omega)$: $v(\beta)=\bT$ implies $w(\beta)=\bT\}$.
    \end{enumerate}
    The set of level valuations in $\mathcal{M}_{IPL}$ is defined as the intersection of the sets of $n$th-level valuations, for $n\geq0$:

    \[
        \mathcal{L}_{IPL} = \bigcap^\infty_{n \geq 0} \mathcal{L}^{IPL}_n
    \]

\end{definition}

\begin{definition}[RNmatrix for \textbf{IPL}]\label{def:Rnmatrix_IPL}
    Let $\mathcal{M}_{IPL}$ be the Nmatrix defined in Definition~\ref{def:M_IPL}. The RNmatrix for \textbf{IPL} is the pair $\mathcal{R}(\mathcal{M}_{IPL}) = \langle \mathcal{M}_{IPL}, \mathcal{L}_{IPL}\rangle$.
\end{definition}

\subsection{Soundness and Completeness of \ipl\ w.r.t. $\mathcal{R}(\mathcal{M}_{IPL})$}

From now on, {\bf LI} will stand for a standard sequent calculus for \textbf{IPL} over $\Omega$. In fact, the sequent calculus {\bf LI} presented here is obtained from Gentzen's system {\bf LJ} (see, for instance, \cite[Chapter~1]{Takeuti}) with minor modifications. Concretely, we consider sequents formed by pairs of finite multisets of formulas instead of pairs of  sequences (which allows to eliminate the {\em exchange} rule), and by adding contexts to the axiom (which allows to eliminate the {\em left weakening} rule). Certain rules can also be simplified in this setting. Clearly, both calculi are equivalent. 

By a {\em sequent} we mean an expression $\Gamma \Rightarrow \Delta$ such that $\Gamma$ and $\Delta$ are (non simultaneously empty) finite multisets, and $\Delta$ contains at most one formula. The calculus {\bf LI} is given by the following axiom and inference rules:\\[4mm]
{\bf Axiom}
$$(Ax) \, \, \alpha,\Gamma\Rightarrow \alpha$$

\

\noindent
{\bf Structural Rules}

$$ (contr\Rightarrow) \, \displaystyle\frac{\alpha,\alpha,\Gamma\Rightarrow\Delta}{\alpha,\Gamma\Rightarrow\Delta} \hspace{2cm} (\Rightarrow w) \, \displaystyle\frac{\Gamma\Rightarrow}{\Gamma\Rightarrow \alpha} $$

$$(cut) \, \displaystyle\frac{\Gamma\Rightarrow\alpha \hspace{.5cm} \alpha, \Gamma\Rightarrow\Delta}{\Gamma\Rightarrow\Delta}$$

\

\noindent
{\bf Logical Rules}

$$\mbox{ ($\vee\Rightarrow$) }\displaystyle\frac{\alpha, \Gamma\Rightarrow\Delta \hspace{0,5cm}  \beta, \Gamma\Rightarrow\Delta}{\alpha\vee\beta
, \Gamma\Rightarrow\Delta} \hspace{1.1cm} \mbox{ ($\Rightarrow\vee_1$) } \, \, \displaystyle\frac{\Gamma\Rightarrow\alpha }{\Gamma\Rightarrow \alpha\vee\beta } \hspace{1.1cm} \mbox{ ($\Rightarrow\vee_2$) } \, \, \displaystyle\frac{\Gamma\Rightarrow\beta }{\Gamma\Rightarrow \alpha\vee\beta }$$

$$\mbox{ ($\wedge\Rightarrow$) }\displaystyle\frac{\alpha, \beta,\Gamma\Rightarrow\Delta}{\alpha\wedge\beta
, \Gamma\Rightarrow\Delta} \hspace{2cm} \mbox{ ($\Rightarrow\wedge$) } \, \, \displaystyle\frac{\Gamma\Rightarrow\alpha  \hspace{0,5cm} \Gamma\Rightarrow\beta }{\Gamma\Rightarrow \alpha\wedge\beta}$$

$$\mbox{ ($\to\Rightarrow$) } \, \,\displaystyle\frac{\Gamma\Rightarrow\alpha \hspace{0,5cm} \beta, \Gamma\Rightarrow \Delta}{\alpha \to\beta,\Gamma\Rightarrow \Delta}  \hspace{2cm} \mbox{ ($\Rightarrow\to$) } \, \,\displaystyle\frac{\alpha,  \Gamma\Rightarrow \beta }{\Gamma\Rightarrow \alpha \to\beta} $$

$$\mbox{($\neg\Rightarrow$)} \, \, \displaystyle\frac{ \Gamma \Rightarrow \alpha }{\neg\alpha, \Gamma\Rightarrow} \hspace{1.5cm} \mbox{($\Rightarrow\neg$)} \, \, \displaystyle\frac{\alpha, \Gamma \Rightarrow}{ \Gamma\Rightarrow \neg\alpha}$$

The consequence relation of  {\bf LI} will be denoted by $\vdash_{IPL}$. Hence, for any  set $\Gamma \cup \{\alpha\} \subseteq For(\Omega)$, $\Gamma \vdash_{IPL} \alpha$ iff there exists a finite set $\Gamma_0 \subseteq \Gamma$ such that the sequent $\Gamma_0 \Rightarrow \alpha$ is derivable in {\bf LI}. The logic generated by {\bf LI}, with consequence relation $\vdash_{IPL}$, is precisely \ipl.

\begin{remark} \label{MTD}
Being a presentation of \ipl, the deduction metatheorem (DM) holds for the consequence relation $\vdash_{IPL}$ associated to {\bf LI}:
$$(DM) \ \ \Gamma,\alpha \vdash_{IPL} \beta \ \ \mbox{ iff } \ \  \Gamma \vdash_{IPL}\alpha \to \beta$$
for every set of formulas $\Gamma \cup \{\alpha, \beta\}$. The validity of (DM) can be checked directly by using ($Ax$) and the rules for implication. Indeed, the `only if' part follows by ($\Rightarrow\to$). For the `if' part, it can be observed that $\Gamma_0,\alpha, \alpha \to \beta \Rightarrow\beta$ is provable in {\bf LI} for every finite multiset $\Gamma_0$ and formulas $\alpha$ and $\beta$: this is a consequence of applying ($\to\Rightarrow$) to the axioms $\Gamma_0,\alpha \Rightarrow\alpha$ and $\Gamma_0,\alpha, \beta \Rightarrow\beta$. From this, the result follows from the hypothesis  $\Gamma_0 \Rightarrow \alpha \to \beta$ (for a finite set $\Gamma_0 \subseteq \Gamma$) and ($cut$).
\end{remark}

\begin{definition} Let $\Pi$ be a derivation of a sequent $\Gamma \Rightarrow\Delta$ in {\bf LI} (that is, $\Pi$ is a tree  of sequents with root $\Gamma \Rightarrow\Delta$ and axioms as leaves). The {\em depth} of $\Pi$ is its height (as a tree). If  a sequent $\Gamma \Rightarrow\Delta$ admits a derivation in {\bf LI} of depth $k$ (for $k \geq 0$), we write $\Gamma \vdash_{IPL}^k \Delta$.
\end{definition}

The proof of soundness of  {\bf LI} w.r.t. the RNmatrix $\mathcal{R}(\mathcal{M}_{IPL})$ requires to establish, in a  precise way, soundness of  derivations of depth $k$ w.r.t. valuations of level $k$.

\begin{definition} Let $k \geq 0$ and $v \in \mathcal{L}_k^{IPL}$. We say that $v$ {\em satisfies a formula} $\alpha$, written as $v \models\alpha$, if $v(\alpha)=\bT$. If $\Gamma$ is a non-empty (multi)set of formulas, we say that $v$ {\em satisfies} $\Gamma$, denoted by $v\models\Gamma$, if $v\models \gamma$ for every $\gamma \in \Gamma$. We say that $v$ satisfies a sequent $\Gamma\Rightarrow\Delta$, denoted by $v\models \Gamma\Rightarrow\Delta$, if the following holds:\\[1mm]
(1) $\Delta=\emptyset$. In this case, $v \not\models\Gamma$, that is, $v(\beta)\in\{\bU,\bF\}$ for some $\beta \in \Gamma$; or\\[1mm]
(2) $\Delta=\{\gamma\}$, for some formula $\gamma$. In this case, $v \models\gamma$ whenever $v\models\Gamma$.

A sequent $\Gamma\Rightarrow\Delta$ is said to be {\em valid} in $\mathcal{L}_k^{IPL}$, denoted by $\Gamma\models_k\Delta$, if it is satisfied by any $v \in\mathcal{L}_k^{IPL}$. 
%Finally, we say that a sequent  rule
%$$\displaystyle \, \frac{\Gamma_1\Rightarrow\Delta_1 \dots \Gamma_k\Rightarrow\Delta_k}{\Gamma\Rightarrow\Delta}$$ 
%{\em preserves validity}  in $\mathcal{M}_{IPL}$ if the following holds: if  $\Gamma_i\Rightarrow\Delta_i$ is valid in  $\mathcal{M}_{IPL}$  (for $1\leq i\leq k$) then $\Gamma\Rightarrow\Delta$ is valid in  $\mathcal{M}_{IPL}$. 
\end{definition}

\begin{proposition} \label{seq-rules-sound}
For every $k \geq 0$ and every sequent $\Gamma \Rightarrow\Delta$: if $\Gamma \vdash_{IPL}^k \Delta$, then $\Gamma\models_k\Delta$.
\end{proposition}
\begin{proof}
By induction on $k$.\\[1mm]
{\bf Base} $k=0$: Suppose that $\Gamma \vdash_{IPL}^0 \Delta$. Then  $\Gamma \Rightarrow\Delta$ is an instance of the axiom ($Ax$), that is:  $\Gamma \Rightarrow\Delta = \alpha,\Gamma' \Rightarrow \alpha$ for some multiset $\Gamma'$. Let $v \in \mathcal{L}_0^{IPL}$ such that $v \models \Gamma' \cup \{\alpha\}$. In particular, $v \models \alpha$, hence $v \models \Gamma \Rightarrow\Delta$. From this,  $\Gamma \models_0 \Delta$.\\[1mm]
{\bf Inductive step}: Suppose that, for any sequent  $\Gamma \Rightarrow\Delta$, if $\Gamma \vdash_{IPL}^j \Delta$ then  $\Gamma\models_j\Delta$, for every $j \leq k$, for a given $k \geq 0$ (Induction Hypothesis, IH). Let $\Gamma \Rightarrow\Delta$ be a sequent such that $\Gamma \vdash_{IPL}^{k+1} \Delta$. If $\Gamma \Rightarrow \Delta$ is an instance of the axiom, it can be proven as above that  $\Gamma\models_{k+1}\Delta$. Suppose now that $\Gamma \Rightarrow\Delta$ was obtained from other sequents by applying a rule of {\bf LI} in a derivation of depth $k+1$. We have the following cases:\\[1mm]
($contr\Rightarrow$): Suppose that  $\Gamma \Rightarrow\Delta= \alpha,\Gamma' \Rightarrow\Delta$ is obtained from $\alpha,\alpha,\Gamma'\Rightarrow\Delta$ by applying rule ($contr\Rightarrow$). Then $\alpha,\alpha,\Gamma' \vdash_{IPL}^k \Delta$ and so $\alpha,\alpha,\Gamma' \models_k \Delta$, by (IH). But the latter clearly implies that $\alpha,\Gamma' \models_k \Delta$ and so  $\alpha,\Gamma' \models_{k+1} \Delta$,  given that $\mathcal{L}_{k+1}^{IPL} \subseteq \mathcal{L}_k^{IPL}$.\\[1mm]
($\Rightarrow w$): Suppose that  $\Gamma \Rightarrow\Delta= \Gamma \Rightarrow\alpha$ is obtained from $\Gamma\Rightarrow$ by ($\Rightarrow w$). Then  $\Gamma \vdash_{IPL}^k $ and so, by (IH), $\Gamma\models_k$.  This means that, for every $v \in \mathcal{L}_k^{IPL}$, $v\not\models \Gamma$ and so $v\models\Gamma\Rightarrow \alpha$. From this it follows that $\Gamma\models_{k+1}\Delta$,  given that $\mathcal{L}_{k+1}^{IPL} \subseteq \mathcal{L}_k^{IPL}$.\\[1mm]
($cut$): Assume that $\Gamma \Rightarrow\Delta$ follows from $\Gamma\Rightarrow\alpha$ and $\alpha, \Gamma\Rightarrow\Delta$ by ($cut$) rule. Then  $\Gamma\vdash_{IPL}^k\alpha$ and $\alpha, \Gamma\vdash_{IPL}^k\Delta$ and so, by (IH),  $\Gamma \models_k\alpha$ and $\alpha, \Gamma \models_k\Delta$.
Suppose first that $\Delta=\emptyset$.  Let $v \in \mathcal{L}_{k+1}^{IPL}$ such that $v \models\Gamma$; in particular, $v \in \mathcal{L}_{k}^{IPL}$ and so $v \models\Gamma\Rightarrow\alpha$. From this it follows that  $v\models\alpha$. But this is impossible, given that $v \models \Gamma \cup\{\alpha\} \Rightarrow$, by (IH). From this, $v \not\models\Gamma$ for every $v\in \mathcal{L}_{k+1}^{IPL}$. That is, $v\models\Gamma\Rightarrow$, for every $v\in \mathcal{L}_{k+1}^{IPL}$. Now, suppose that $\Delta=\{\gamma\}$ for some formula $\gamma$, and let $v\in \mathcal{L}_{k+1}^{IPL}$ such that $v \models \Gamma$. Once again, since $v \models\Gamma\Rightarrow\alpha$ we infer that  $v\models\alpha$. But then $v \models \Gamma \cup \{\alpha\}$ and so $v\models\gamma$, given that $v \models\Gamma \cup\{\alpha\}\Rightarrow \gamma$. This means that $v \models \Gamma\Rightarrow\Delta$, for every $v\in \mathcal{L}_{k+1}^{IPL}$. That is, $\Gamma\models_{k+1}\Delta$.\\[1mm]
($\vee\Rightarrow$): Assume that  $\Gamma \Rightarrow\Delta=\alpha \vee\beta,\Gamma' \Rightarrow\Delta$ follows from $\alpha, \Gamma'\Rightarrow\Delta$ and $\beta, \Gamma'\Rightarrow\Delta$ by ($\vee\Rightarrow$). By (IH),  $\alpha, \Gamma' \models_k \Delta$ and $\beta, \Gamma' \models_k \Delta$. Suppose first that $\Delta=\emptyset$. By hypothesis, if $v \in \mathcal{L}_{k+1}^{IPL}$ is such that $v\models\Gamma'$, then $v\not\models\alpha$ and $v\not\models\beta$, and so $v\not\models\alpha \vee\beta$. This means that $v \models \Gamma' \cup \{\alpha\vee\beta\}\Rightarrow$, for every $v\in \mathcal{L}_{k+1}^{IPL}$. Now, if $\Delta=\{\gamma\}$ for some formula $\gamma$, let $v\in \mathcal{L}_{k+1}^{IPL}$. By hypothesis, if  $v\models\Gamma'$ and $v\models\alpha$, then $v\models\gamma$; and if  $v\models\Gamma'$ and $v\models\beta$, then $v\models\gamma$. From this, if  $v\models\Gamma'$ and $v\models\alpha\vee\beta$ then, by definition of $\mathcal{M}_{IPL}$, either $v\models\alpha$ or $v\models\beta$  and so, by hypothesis,  $v\models\gamma$. That is, $v \models \Gamma\Rightarrow\Delta$ for every $v\in \mathcal{L}_{k+1}^{IPL}$ and so  $\Gamma\models_{k+1}\Delta$.\\[1mm]
($\Rightarrow\vee_i$): It is enough to observe that, if either $v\models \alpha$ or $v\models \beta$ then $v\models \alpha\vee\beta$, by definition of $\mathcal{M}_{IPL}$.\\[1mm]
($\wedge\Rightarrow$): Suppose that $\Gamma \Rightarrow\Delta=\alpha \land \beta,\Gamma' \Rightarrow\Delta$ follows from $\alpha, \beta,\Gamma'\Rightarrow\Delta$ by ($\wedge\Rightarrow$). Assume first that $\Delta=\emptyset$. Then, if $v \in \mathcal{L}_{k+1}^{IPL}$ such that $v \models\Gamma'$  it follows that $v \not\models\{\alpha,\beta\}$, since $\mathcal{L}_{k+1}^{IPL} \subseteq \mathcal{L}_k^{IPL}$ and $\alpha, \beta,\Gamma'\models_{k}\Delta$ by (IH). From this, either  $v \not\models\alpha$ or  $v \not\models\beta$. By definition of $\mathcal{M}_{IPL}$,  $v \not\models\alpha \land\beta$. That is, $v \not\models\Gamma' \cup \{\alpha\land\beta\}$ for every $v\in \mathcal{L}_{k+1}^{IPL}$, i.e., $\alpha\wedge\beta, \Gamma'\models_{k+1}$. In turn, if  $\Delta=\{\gamma\}$ for some formula $\gamma$, let $v \in \mathcal{L}_{k+1}^{IPL}$ such that $v \models\Gamma' \cup \{\alpha\land\beta\}$. By definition of $\mathcal{M}_{IPL}$, $v \models\Gamma' \cup \{\alpha,\beta\}$ and so $v\models\gamma$, by (IH) and the fact that  $v \in \mathcal{L}_k^{IPL}$. That is, $v\models \Gamma' \cup \{\alpha \land\beta\}\Rightarrow\Delta$ for every $v\in \mathcal{L}_{k+1}^{IPL}$.\\[1mm]
($\Rightarrow\wedge$): It is enough to observe that, if both $v\models \alpha$ and $v\models \beta$ then $v\models \alpha\wedge\beta$, by definition of $\mathcal{M}_{IPL}$.\\[1mm]
($\to\Rightarrow$) Suppose that $\Gamma \Rightarrow\Delta= \alpha \to \beta,\Gamma'\Rightarrow \Delta$ is obtained from $\Gamma' \Rightarrow \alpha$ and $\beta, \Gamma' \Rightarrow\Delta$ by ($\to\Rightarrow$). By (IH),  $\Gamma'\models_{k} \alpha$ and $\beta, \Gamma' \models_{k}\Delta$. Assume first that $\Delta=\emptyset$. Let $v \in \mathcal{L}_{k+1}^{IPL}$ such that $v \models\Gamma' \cup\{\alpha \to \beta\}$. Then, $v  \in \mathcal{L}_{k}^{IPL}$ such that $v \models\Gamma'$ and so $v\models \alpha$, by hypothesis. Since $v \models\alpha \to \beta$ then  $v \models \beta$, by definition of  $\mathcal{M}_{IPL}$. That is, $v  \in \mathcal{L}_{k}^{IPL}$ is such that $v \models\Gamma'\cup \{\beta\}$, which contradicts the fact that $\beta, \Gamma' \models_{k}$. From this,  $v \not\models\Gamma' \cup\{\alpha \to \beta\}$ for every $v \in \mathcal{L}_{k+1}^{IPL}$, which means that $\alpha \to\beta,\Gamma'\models_{k+1}$. Now, suppose that $\Delta=\{\gamma\}$ for some formula $\gamma$.  Let $v \in \mathcal{L}_{k+1}^{IPL}$ such that $v \models\Gamma' \cup\{\alpha \to \beta\}$. By reasoning as above,   $v  \in \mathcal{L}_{k}^{IPL}$ is such that $v \models\Gamma'\cup \{\beta\}$. But then $v \models \gamma$, by hypothesis. This implies that $v \models\Gamma' \cup\{\alpha \to \beta\} \Rightarrow \gamma$ for every $v \in \mathcal{L}_{k+1}^{IPL}$. \\[1mm]
($\Rightarrow\to$): Assume that $\Gamma \Rightarrow\Delta= \Gamma\Rightarrow \alpha \to \beta$ is obtained from $\alpha,  \Gamma \Rightarrow \beta$ by ($\Rightarrow\to$). By (IH),  $\alpha,  \Gamma\models_{k} \beta$. Let $v \in \mathcal{L}_{k+1}^{IPL}$ such that $v \models \Gamma$. Suppose that $v(\alpha \to \beta)=\bF$. By definition of $\mathcal{M}_{IPL}$, $v(\alpha)=\bT$ and $v(\beta) \in \{\bU,\bF\}$. That is, $v \models\Gamma \cup\{\alpha\}$ but $v \not\models\beta$, which contradicts the fact that $\alpha,  \Gamma\models_{k} \beta$, given that $v \in \mathcal{L}_{k}^{IPL}$. Hence, $v(\alpha \to \beta)\neq\bF$. Suppose now that $v(\alpha \to \beta)=\bU$. By Definition~\ref{def:level_valuations_IPL} of level valuations, there exists $w \in \mathcal{L}_{k}^{IPL}$ such that  $w(\alpha \to \beta)=\bF$ and $w \models \Gamma$. By reasoning as above, $w(\alpha)=\bT$ and $w(\beta) \in \{\bU,\bF\}$. But then $w \models \Gamma \cup\{\alpha\}$ and $w \not\models\beta$, which contradicts the fact that  $\alpha,  \Gamma\models_{k} \beta$. From this, it follows that  $v(\alpha \to \beta)=\bT$. That is, $v\models \Gamma\Rightarrow \alpha\to\beta$ for every $v \in \mathcal{L}_{k+1}^{IPL}$.\\[1mm]
($\neg\Rightarrow$): Assume that $\Gamma \Rightarrow\Delta= \neg \alpha,\Gamma'\Rightarrow$ is obtained from $\Gamma' \Rightarrow \alpha$ by ($\neg\Rightarrow$). By (IH),  $\Gamma'\models_{k} \alpha$. Let $v \in \mathcal{L}_{k+1}^{IPL}$ such that $v \models \Gamma' \cup\{\neg\alpha\}$. Then, $v \in \mathcal{L}_{k}^{IPL}$ such that $v \models \Gamma'$ and so $v \models \alpha$, by hypothesis, which contradicts the fact that $v \models \neg\alpha$.  From this,  $v \not\models\Gamma' \cup\{\neg\alpha\}$ for every $v \in \mathcal{L}_{k+1}^{IPL}$, which means that $\neg\alpha,\Gamma'\models_{k+1}$. \\[1mm]
($\Rightarrow\neg$): Assume that $\Gamma \Rightarrow\Delta= \Gamma\Rightarrow \neg\alpha$ is obtained from $\alpha,  \Gamma \Rightarrow$ by ($\Rightarrow\neg$). By (IH),  $\alpha,  \Gamma\models_{k} $. Let $v \in \mathcal{L}_{k+1}^{IPL}$ such that $v \models \Gamma$. Suppose that $v(\neg\alpha)=\bF$. By definition of $\mathcal{M}_{IPL}$, $v(\alpha)=\bT$, hence $v \models\Gamma \cup\{\alpha\}$, which contradicts the fact that $\alpha,  \Gamma\models_{k}$, since $v \in \mathcal{L}_{k}$. Now, suppose that $v(\neg\alpha)=\bU$. By Definition~\ref{def:level_valuations_IPL} of level valuations, there exists $w \in \mathcal{L}_{k}^{IPL}$ such that  $w(\neg\alpha)=\bF$ and $w \models \Gamma$. As proven above, $w(\alpha)=\bT$ and so $w \models \Gamma \cup\{\alpha\}$, contradicting the fact that  $\alpha,  \Gamma\models_{k}$. From this, it follows that  $v(\neg\alpha)=\bT$. That is, $v\models \Gamma\Rightarrow \neg\alpha$ for every $v \in \mathcal{L}_{k+1}^{IPL}$.

This concludes the proof.
\end{proof}

%\begin{proposition} \label{MP_RN_IPL}
% Let $\Gamma \cup\{\alpha,\beta\} \subseteq For(\Omega)$. If $\Gamma \models_{\mathcal{R}(\mathcal{M}_{IPL})} \alpha \to \beta$   then $\Gamma, \alpha \models_{\mathcal{R}(\mathcal{M}_{IPL})} \beta$. 
%\end{proposition}
%\begin{proof} Assume that $\Gamma \models_{\mathcal{R}(\mathcal{M}_{IPL})} \alpha \to \beta$, and let $v \in \mathcal{L}_{IPL}$ such that $v(\gamma)=v(\alpha)=\bT$ for every $\gamma\in\Gamma$. By hypothesis, $v(\alpha \to \beta)=\bT$. By definition of $\to^{IPL}$, and given that $v \in Val^{nd}(\mathcal{M}_{IPL})$, it follows from this that $v(\beta)=\bT$. This shows that $\Gamma, \alpha \models_{\mathcal{R}(\mathcal{M}_{IPL})} \beta$.
%\end{proof}

\begin{theorem} [Soundness  of {\bf LI}  w.r.t.  $\mathcal{R}(\mathcal{M}_{IPL})$] \label{thm:sound:IPL}
For every set $\Gamma \cup \{\varphi\}$ of formulas over $\Omega$: $\Gamma \vdash_{IPL} \varphi$ implies that $\Gamma  \models_{\mathcal{R}(\mathcal{M}_{IPL})} \varphi$.
\end{theorem}
\begin{proof}
Suppose that  $\Gamma \vdash_{IPL} \varphi$. By definition, there exists a finite set $\Gamma_0 \subseteq \Gamma$ such that the sequent $\Gamma_0 \Rightarrow \varphi$ is derivable in {\bf LI}. But then, there exists $k\geq 0$ such that $\Gamma_0 \vdash_{IPL}^k \varphi$. By Proposition~\ref{seq-rules-sound}, $\Gamma_0\models_k \varphi$. This implies that $\Gamma  \models_{\mathcal{R}(\mathcal{M}_{IPL})} \varphi$, given that $\mathcal{L}_{IPL} \subseteq \mathcal{L}_{k}^{IPL}$.
\end{proof}

We will concentrate now on the  completeness proof of {\bf LI} w.r.t. $\mathcal{R}(\mathcal{M}_{IPL})$.

\begin{remark} \label{rem:sat-IPL}
Recall from Remark~\ref{varphi-sat-sets} that, in any Tarskian and finitary logic {\bf L}, if $\Gamma \nvdash_{\bf L} \varphi$ then there exists a $\varphi$-saturated set $\Delta$ in {\bf L} such that $\Gamma \subseteq \Delta$. This property holds, in particular, in the logic generated by {\bf LI} (that is, \textbf{IPL}). Moreover, it is easy to prove that, if $\Delta$ is a $\varphi$-saturated set in \textbf{IPL} then: $\alpha \in \Delta$ implies that $\neg\alpha \notin \Delta$; $\alpha \vee \beta \in \Delta$ iff $\alpha \in \Delta$ or $\beta \in \Delta$; $\alpha \land \beta \in \Delta$ iff $\alpha \in \Delta$ and $\beta \in \Delta$;  $\alpha \to \beta \in \Delta$ and $\alpha \in \Delta$ implies that $\beta \in \Delta$; and $\beta \in \Delta$ implies that $\alpha \to \beta \in \Delta$.
\end{remark}

\begin{definition} [Valuation associated to a $\varphi$-saturated set in \textbf{IPL}] \label{def:val-Delta-IPL}
Let $\Delta \subseteq For(\Omega)$ be a $\varphi$-saturated set in the logic generated by {\bf LI} (that is, \textbf{IPL}). The {\em valuation associated to $\Delta$ in \textbf{IPL}} is the function  $v_\Delta:For(\Omega) \to V_{IPL}$ defined inductively as follows (here, $p$ is a propositional variable and $\alpha,\beta \in For(\Omega)$):  \\[1mm]

$v_\Delta(p)= \left \{ \begin{tabular}{rl}
$\bT$ & if $p \in \Delta$,\\[1mm]
$\bF$ & if $p \notin \Delta$;\\
\end{tabular}\right.$ \\[4mm]

\

$v_\Delta(\neg\alpha)= \left \{ \begin{tabular}{rl}
$\bT$ & if $\neg\alpha \in \Delta$,\\[1mm]
$\bU$ & if $\neg\alpha \notin \Delta$ and $\alpha \notin \Delta$,\\[1mm]
$\bF$ & if $\neg\alpha \notin \Delta$ and $\alpha \in \Delta$;\\
\end{tabular}\right.$ \\[4mm]

\

$v_\Delta(\alpha\to\beta)= \left \{ \begin{tabular}{rl}
$\bT$ & if $\alpha\to\beta \in \Delta$,\\[1mm]
$\bU$ & if $\alpha\to\beta \notin \Delta$ and $\alpha \notin \Delta$,\\[1mm]
$\bF$ & if $\alpha\to\beta \notin \Delta$ and $\alpha \in \Delta$;\\
\end{tabular}\right.$ \\[4mm]

\

$v_\Delta(\alpha\land\beta)= \left \{ \begin{tabular}{rl}
$\bT$ & if $\alpha \in \Delta$ and $\beta \in \Delta$,\\[1mm]
$\bF$ & if $\alpha \notin \Delta$ or $\beta \notin \Delta$;\\
\end{tabular}\right.$ \\[4mm]

\

$v_\Delta(\alpha\vee \beta)= \left \{ \begin{tabular}{rl}
$\bT$ & if $\alpha \in \Delta$ or $\beta \in \Delta$,\\[1mm]
$\bF$ & if $\alpha \notin \Delta$ and $\beta \notin \Delta$.\\
\end{tabular}\right.$ \\[2mm]
\end{definition}

\begin{remark} \label{ren:canval}
The observations about the meaning of $\bU$ made after Definition~\ref{def:M_IPL} can be recast now.
Observe that $v_\Delta(\neg\alpha)=\bU$ whenever $\alpha$ is unknown in $\Delta$, that is: $\neg\alpha \notin \Delta$ and $\alpha \notin \Delta$. Hence, the failure of excluded middle $\alpha \vee \neg\alpha$  (a principle valid in classical logic but not in  \textbf{IPL}) occurs only for formulas $\alpha$ with such undetermined status in $\Delta$. Analogously, $v_\Delta(\alpha\to\beta)=\bU$ whenever $\alpha$ and $\alpha\to\beta$ are jointly unknown in $\Delta$, that is: neither $\alpha$ nor $\alpha\to\beta$ belong to $\Delta$. Hence, the failure of Peirce's law $\alpha \vee (\alpha \to\beta)$ (a principle valid in classical logic but not in  \textbf{IPL}) occurs only for formulas $\alpha$ and $\alpha\to\beta$ with such joint undetermined status in $\Delta$. The function $v_\Delta$ can be seen as a 3-valued characteristic map associated to $\Delta$, in which the third value $\bU$ reflects the undetermined status of a sentence that does not belong to $\Delta$. Only sentences of the form $\neg\alpha$ or $\alpha \to \beta$ can be unknown or undetermined (in this sense) w.r.t.  a $\varphi$-saturated set in \ipl.
\end{remark}

\begin{proposition} \label{prop:val-Delta-IPL}
Let $\Delta \subseteq For(\Omega)$ be a $\varphi$-saturated set in the logic generated by {\bf LI}. Then, the valuation $v_\Delta$ associated to $\Delta$ in \textbf{IPL} is a valuation in the Nmatrix $\mathcal{M}_{IPL}$ which belongs to $Val^{nd}_+(\mathcal{M}_{IPL})$ (recall Definition~\ref{def:0-level_valuations_IPL}) such that, for every $\alpha$: $v_\Delta(\alpha)=\bT$ iff $\alpha \in \Delta$.
\end{proposition}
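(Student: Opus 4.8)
The plan is to first isolate the auxiliary equivalence
\[
v_\Delta(\alpha)=\bT \iff \alpha \in \Delta,
\]
since it is the backbone of the whole argument, and then to read off both the membership $v_\Delta \in Val_+(\mathcal{M}_{IPL})$ and the fact that $v_\Delta$ respects every multioperator directly from this equivalence together with the closure properties of $\varphi$-saturated sets recorded in Remark~\ref{rem:sat-IPL}. Note that $v_\Delta$ is a well-defined total function on $For(\Omega)$: each defining clause assigns a definite value determined solely by $\Delta$-membership of $\alpha$ and of its immediate subformulas.

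First I would prove the equivalence by cases on the outermost connective of $\alpha$. When $\alpha$ is a propositional variable, or $\alpha=\neg\beta$, or $\alpha=\beta\to\gamma$, the clause defining $v_\Delta$ assigns $\bT$ precisely when the whole formula $\alpha$ lies in $\Delta$, so the equivalence is immediate. For $\alpha=\beta\land\gamma$ and $\alpha=\beta\vee\gamma$ the clauses instead mention membership of $\beta$ and $\gamma$, so here I invoke Remark~\ref{rem:sat-IPL}: $\beta\land\gamma\in\Delta$ iff $\beta\in\Delta$ and $\gamma\in\Delta$, while $\beta\vee\gamma\in\Delta$ iff $\beta\in\Delta$ or $\gamma\in\Delta$. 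Matching these with the corresponding clauses yields $v_\Delta(\alpha)=\bT$ iff $\alpha\in\Delta$ in both cases. The membership $v_\Delta\in Val_+(\mathcal{M}_{IPL})$ is then immediate, since the clause for propositional variables only ever outputs $\bT$ or $\bF$.

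The bulk of the work is verifying that $v_\Delta$ respects each multioperator of Table~\ref{table:nm_IPL}, i.e. that $v_\Delta$ applied to a compound formula lies in the table entry computed from the values of its immediate subformulas. I would do this connective by connective, always using the equivalence to translate truth values into membership statements ($\bT$ corresponds to belonging to $\Delta$, and $\bU,\bF$ to not belonging). For negation: if $v_\Delta(\alpha)=\bT$ then $\alpha\in\Delta$, whence $\neg\alpha\notin\Delta$ by Remark~\ref{rem:sat-IPL}, so $v_\Delta(\neg\alpha)=\bF\in\neg^{\mathcal{A}_{IPL}}(\bT)=\{\bF\}$; if $v_\Delta(\alpha)\in\{\bU,\bF\}$ then $\alpha\notin\Delta$, so the clause yielding $\bF$ for $\neg\alpha$ cannot apply and $v_\Delta(\neg\alpha)\in\{\bU,\bT\}$, matching the table. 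For conjunction and disjunction the check is direct: the tables collapse to $\{\bT\}$ exactly when (respectively) both or at least one argument is $\bT$, and the equivalence together with the defining clauses of $v_\Delta(\alpha\land\beta)$ and $v_\Delta(\alpha\vee\beta)$ reproduce precisely this pattern, with $\{\bF\}$ in all remaining entries.

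The hard part, relatively speaking, is implication, which is the only case requiring both directions of the Remark facts and a contrapositive. I would split on whether $\alpha\in\Delta$ and whether $\beta\in\Delta$. The $\bT$-valued entries are handled by ``$\beta\in\Delta$ implies $\alpha\to\beta\in\Delta$''. The delicate entries are the two $\{\bF\}$ entries in the $\bT$-row of $\to^{\mathcal{A}_{IPL}}$, where $v_\Delta(\alpha)=\bT$, i.e. $\alpha\in\Delta$, and $\beta\notin\Delta$: here I must conclude $\alpha\to\beta\notin\Delta$, which is exactly the contrapositive of the Remark fact ``$\alpha\to\beta\in\Delta$ and $\alpha\in\Delta$ imply $\beta\in\Delta$'', giving $v_\Delta(\alpha\to\beta)=\bF$. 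In the remaining case $\alpha\notin\Delta$, the clause producing $\bF$ for $\alpha\to\beta$ cannot fire, so $v_\Delta(\alpha\to\beta)\in\{\bU,\bT\}$, matching every entry of the $\bU$- and $\bF$-rows. This exhausts the connectives and completes the verification that $v_\Delta$ is a valuation in $\mathcal{M}_{IPL}$ with the stated properties.
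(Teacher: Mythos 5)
Your proposal is correct and takes essentially the same approach as the paper: both first establish the equivalence $v_\Delta(\alpha)=\bT$ iff $\alpha \in \Delta$ (together with $v_\Delta(p)\in\{\bT,\bF\}$, giving membership in $Val_+(\mathcal{M}_{IPL})$) and then verify each multioperator of Table~\ref{table:nm_IPL} by translating truth values into $\Delta$-membership via Remark~\ref{rem:sat-IPL}, with the implication case resting on the same two facts (\MP-closure for the $\{\bF\}$ entries of the $\bT$-row, and ``$\beta\in\Delta$ implies $\alpha\to\beta\in\Delta$'' for the $\{\bT\}$ column). The only cosmetic difference is that the paper organizes the connective checks by the value of the compound formula while you split on the values of the arguments; this is the same case analysis transposed.
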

\begin{proof} Observe that, by definition and  by Remark~\ref{rem:sat-IPL}, $v_\Delta(\alpha)=\bT$ iff $\alpha \in \Delta$, for every formula $\alpha$. Moreover, $v_\Delta(p) \in\{\bT,\bF\}$ for every propositional variable $p$.\\[1mm]
{\bf Negation:} Let us prove first that $v_\Delta(\neg\alpha) \in \neg^{IPL}(v_\Delta(\alpha))$ for every $\alpha$. If $v_\Delta(\neg\alpha)=\bT$ then $\neg\alpha \in \Delta$, by definition. Hence, $\alpha \notin \Delta$ and so $v_\Delta(\alpha) \in \{\bU,\bF\}$. From this, $v_\Delta(\neg\alpha) = \bT \in \{\bU,\bT\}=\neg^{IPL}(v_\Delta(\alpha))$. Now, if $v_\Delta(\neg\alpha)=\bU$ then $\alpha \notin \Delta$, by definition. Hence $v_\Delta(\alpha) \in \{\bU,\bF\}$ and so $v_\Delta(\neg\alpha) = \bU \in \{\bU,\bT\}=\neg^{IPL}(v_\Delta(\alpha))$. Finally,   if $v_\Delta(\neg\alpha)=\bF$ then $\alpha \in \Delta$, by definition. Hence $v_\Delta(\alpha)=\bT$ and so $v_\Delta(\neg\alpha) = \bF \in \{\bF\}=\neg^{IPL}(v_\Delta(\alpha))$.\\[1mm]
{\bf Implication:} There are 3 cases:\\
(I1) Suppose that $v_\Delta(\alpha \to \beta)=\bT$. Then $\alpha \to \beta \in \Delta$, by definition.\\
-- If $\alpha \notin \Delta$ then $v_\Delta(\alpha) \in \{\bU,\bF\}$. Hence, $v_\Delta(\alpha \to \beta)=\bT \in {\to}^{IPL}(v_\Delta(\alpha),v_\Delta(\beta))$.\\
-- If $\alpha \in \Delta$ then $\beta \in \Delta$, by (DM).  Hence  $v_\Delta(\alpha)=v_\Delta(\beta)=\bT$ and so $v_\Delta(\alpha \to \beta)=\bT \in \{\bT\}= {\to}^{IPL}(v_\Delta(\alpha),v_\Delta(\beta))$.\\
(I2)  Suppose that $v_\Delta(\alpha \to \beta)=\bU$. Then $\alpha \to \beta \notin \Delta$ and $\alpha \notin\Delta$, by definition. Moreover, $\beta \notin\Delta$, by Remark~\ref{rem:sat-IPL}. Then $v_\Delta(\alpha) \in \{\bU,\bF\}$ and  $v_\Delta(\beta) \in \{\bU,\bF\}$. Hence, $v_\Delta(\alpha \to \beta)=\bU \in \{\bU,\bT\}={\to}^{IPL}(v_\Delta(\alpha),v_\Delta(\beta))$.\\
(I3)  Suppose that $v_\Delta(\alpha \to \beta)=\bF$. Then $\alpha \to \beta \notin \Delta$ and $\alpha \in\Delta$, by definition. Moreover, $\beta \notin\Delta$, by Remark~\ref{rem:sat-IPL}. Then $v_\Delta(\alpha) =\bT$ and  $v_\Delta(\beta) \in \{\bU,\bF\}$. Hence, $v_\Delta(\alpha \to \beta)=\bF \in \{\bF\}={\to}^{IPL}(v_\Delta(\alpha),v_\Delta(\beta))$.\\[1mm]
{\bf Conjunction:} There are 2 cases:\\
(C1) Suppose that $v_\Delta(\alpha \land \beta)=\bT$. Then  $\alpha \in \Delta$ and  $\beta \in\Delta$, by definition. From this, $v_\Delta(\alpha)=v_\Delta(\beta)=\bT$. Hence, $v_\Delta(\alpha \land \beta)=\bT \in \{\bT\}= \land^{IPL}(v_\Delta(\alpha),v_\Delta(\beta))$.\\
(C2)  Suppose that $v_\Delta(\alpha \land \beta)=\bF$. Then, either  $\alpha \notin \Delta$ or  $\beta \notin\Delta$, by definition. From this, $v_\Delta(\alpha)  \in \{\bU,\bF\}$ or $v_\Delta(\beta)  \in \{\bU,\bF\}$. Hence, $v_\Delta(\alpha \land \beta)=\bF \in \{\bF\}= \land^{IPL}(v_\Delta(\alpha),v_\Delta(\beta))$.\\[1mm]
{\bf Disjunction:} There are 2 cases:\\
(D1)  Suppose that $v_\Delta(\alpha \vee \beta)=\bT$. Then, either  $\alpha \in \Delta$ or  $\beta \in\Delta$, by definition. From this, $v_\Delta(\alpha)=\bT$ or $v_\Delta(\beta)=\bT$. Hence, $v_\Delta(\alpha \vee \beta)=\bT \in \{\bT\}= \vee^{IPL}(v_\Delta(\alpha),v_\Delta(\beta))$.\\
(D2) Suppose that $v_\Delta(\alpha \vee \beta)=\bF$. Then  $\alpha \notin \Delta$ and  $\beta \notin\Delta$, by definition. From this, $v_\Delta(\alpha)  \in \{\bU,\bF\}$ and $v_\Delta(\beta)  \in \{\bU,\bF\}$. Hence, $v_\Delta(\alpha \land \beta)=\bF \in \{\bF\}= \vee^{IPL}(v_\Delta(\alpha),v_\Delta(\beta))$.

This concludes the proof.
\end{proof}

\begin{proposition} \label{prop:val-Delta-IPL-level}
Let $\Delta \subseteq For(\Omega)$ be a $\varphi$-saturated set in the logic generated by {\bf LI}. Then, the valuation $v_\Delta$ associated to $\Delta$ in \textbf{IPL} is a  level valuation in $\mathcal{M}_{IPL}$ such that, for every $\alpha$: $v_\Delta(\alpha)=\bT$ iff $\alpha \in \Delta$.
\end{proposition}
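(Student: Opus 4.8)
The plan is to reduce everything to Proposition~\ref{prop:val-Delta-IPL}, which already gives that $v_\Delta \in Val_+(\mathcal{M}_{IPL})$ and that $v_\Delta(\alpha)=\bT$ iff $\alpha\in\Delta$; what remains is to show $v_\Delta \in \mathcal{L}_{IPL} = \cap_{k\geq 0}\mathcal{L}^{IPL}_k$. I would prove, by induction on $k$, the slightly stronger statement that $v_\Sigma \in \mathcal{L}^{IPL}_k$ for \emph{every} $\psi$-saturated set $\Sigma$ (with $\psi$ arbitrary). This strengthening is essential, because the witnesses required at the inductive step will themselves be valuations associated to saturated sets \emph{other} than $\Delta$. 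The base case $k=0$ is immediate from Proposition~\ref{prop:val-Delta-IPL}, since $v_\Sigma \in Val_+(\mathcal{M}_{IPL}) = \mathcal{L}^{IPL}_0$.

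For the inductive step, assume $v_\Sigma \in \mathcal{L}^{IPL}_k$ for all saturated $\Sigma$, fix a saturated $\Delta$, and take $\alpha \in For(\Omega)_{k+1}$ with $v_\Delta(\alpha)=\bU$. By Remark~\ref{ren:canval} and the definition of $v_\Delta$, such an $\alpha$ is either of the form $\neg\gamma$ (with $\neg\gamma\notin\Delta$ and $\gamma\notin\Delta$) or of the form $\gamma\to\delta$ (with $\gamma\to\delta\notin\Delta$ and $\gamma\notin\Delta$), since atoms, conjunctions and disjunctions never receive the value $\bU$ under $v_\Delta$. In both cases I would produce a saturated set $\Delta'\supseteq\Delta\cup\{\gamma\}$ with $v_{\Delta'}(\alpha)=\bF$, and then set $w=v_{\Delta'}$: by the inductive hypothesis $w\in\mathcal{L}^{IPL}_k$, while $\Delta\subseteq\Delta'$ yields $v_\Delta(\beta)=\bT \Rightarrow \beta\in\Delta \Rightarrow \beta\in\Delta' \Rightarrow w(\beta)=\bT$ for every $\beta$, which is exactly the clause needed to conclude $v_\Delta \in \mathcal{L}^{IPL}_{k+1}$.

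The heart of the argument, and the step I expect to be the main obstacle, is the existence of $\Delta'$, which rests on the Lindenbaum-style lemma recalled in the excerpt (if $\Gamma\nvdash_{\bf L}\chi$ then some $\chi$-saturated $\Delta'\supseteq\Gamma$ exists) together with a couple of \textbf{IPL}-specific derivability facts used through the deduction theorem. For $\alpha=\neg\gamma$ I would take the target $\chi=\neg\gamma$ and check $\Delta\cup\{\gamma\}\nvdash_{IPL}\neg\gamma$: by the deduction theorem this is equivalent to $\Delta\nvdash_{IPL}\gamma\to\neg\gamma$, and since $\gamma\to\neg\gamma\vdash_{IPL}\neg\gamma$ (apply \axn\ with $\gamma\to\gamma$), this reduces to $\Delta\nvdash_{IPL}\neg\gamma$, which holds because $\neg\gamma\notin\Delta$ and $\Delta$ is closed; the resulting $\Delta'$ contains $\gamma$ and omits $\neg\gamma$, so $v_{\Delta'}(\neg\gamma)=\bF$. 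For $\alpha=\gamma\to\delta$ I would take $\chi=\gamma\to\delta$ and check $\Delta\cup\{\gamma\}\nvdash_{IPL}\gamma\to\delta$, which by the deduction theorem and contraction ($\gamma\to(\gamma\to\delta)\vdash_{IPL}\gamma\to\delta$, via \axd) reduces to $\Delta\nvdash_{IPL}\gamma\to\delta$, again holding because $\gamma\to\delta\notin\Delta$; the resulting $\Delta'$ contains $\gamma$ but omits $\gamma\to\delta$, so $v_{\Delta'}(\gamma\to\delta)=\bF$. (Consistency of $\Delta\cup\{\gamma\}$ in the first case is subsumed by $\Delta\nvdash_{IPL}\neg\gamma$, using \axde\ to see that inconsistency of $\Delta\cup\{\gamma\}$ would force $\Delta\vdash_{IPL}\neg\gamma$.) Once $\Delta'$ is in hand, the verification of the level-valuation clause is routine, completing the induction and hence the proof that $v_\Delta\in\mathcal{L}_{IPL}$.
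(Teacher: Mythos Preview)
Your proposal is correct and follows essentially the same approach as the paper: the strengthened induction over all saturated sets, the observation that only $\neg\gamma$ and $\gamma\to\delta$ can receive the value $\bU$, and the construction of the witness $w=v_{\Delta'}$ via the Lindenbaum lemma applied to $\Delta\cup\{\gamma\}$ with the respective targets $\neg\gamma$ and $\gamma\to\delta$ all match the paper's argument. The derivability checks you give (using \axn\ with $\gamma\to\gamma$ for the negation case, and deduction/contraction for the implication case) are minor rephrasings of those in the paper.
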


\begin{proof}
 By Proposition~\ref{prop:val-Delta-IPL}, $v_\Delta(\alpha)=\bT$ iff $\alpha \in \Delta$ for every $\alpha$. It will be shown, by induction on $n$, that $v_\Delta \in \mathcal{L}^{IPL}_n$ for every $n \geq 0$.\\[1mm]
{\bf Base} $n=0$: $v_\Delta \in \mathcal{L}^{IPL}_0 = Val^{nd}_+(\mathcal{M}_{IPL})$, by Proposition~\ref{prop:val-Delta-IPL}.\\[1mm]
{\bf Inductive step}: Assume that, for every $\varphi'$ and for every $\varphi'$-saturated set $\Delta'$, $v_{\Delta'} \in \mathcal{L}^{IPL}_k$ for every $k \leq n$, for a given $n \geq 0$ (Induction Hypothesis, IH). Let us prove that  $v_\Delta \in \mathcal{L}^{IPL}_{n+1}$. By IH, $v_\Delta \in  \mathcal{L}^{IPL}_n$. Let $\alpha \in For(\Omega)$ such that $v_\Delta(\alpha)=\bU$. By Definition~\ref{def:val-Delta-IPL}, $\alpha \neq p$ for every propositional variable $p$; and $\alpha \neq \beta \# \gamma$ for $\#\in \{\land,\vee\}$ and every formulas $\beta,\gamma$. Then, there are two cases to analyze:\\[1mm]
(1) $\alpha =\neg\beta$ for some $\beta$. By Definition~\ref{def:val-Delta-IPL}, $\neg\beta \not\in \Delta$ and  $\beta \not\in \Delta$. Suppose that $\Delta,\beta \vdash_{IPL} \neg\beta$. By the deduction metatheorem (DM),  $\Delta \vdash_{IPL} \beta \to \neg \beta$, and so the sequent  $\Delta_0 \Rightarrow \beta \to \neg \beta$ is provable in {\bf LI}, for some finite set $\Delta_0 \subseteq \Delta$. But, for every finite multiset $\Gamma$, the sequent $\Gamma,\beta \to \neg\beta  \Rightarrow \neg\beta$ is provable in {\bf LI}. Indeed, from axiom  $\Gamma,\beta \Rightarrow \beta$ it follows   $\Gamma,\beta, \neg \beta \Rightarrow$, by ($\neg\Rightarrow$). From this, and by using axiom $\Gamma,\beta \Rightarrow \beta$ once again, by ($\to\Rightarrow$) we infer $\Gamma,\beta, \beta \to \neg\beta  \Rightarrow$. Finally, $\Gamma,\beta \to \neg\beta  \Rightarrow \neg\beta$ follows by ($\Rightarrow\neg$). In particular, $\Delta_0,\beta \to \neg\beta  \Rightarrow \neg\beta$ is provable in {\bf LI} and so, by ($cut$) with $\Delta_0 \Rightarrow \beta \to \neg \beta$, it follows that  $\Delta_0 \Rightarrow \neg \beta$ is provable in {\bf LI}. That is, $\Delta \vdash_{IPL} \neg\beta$, hence $\neg\beta \in \Delta$, a contradiction. From this, $\Delta,\beta \nvdash_{IPL} \neg\beta$. Then, there exists a $\neg\beta$-saturated set $\Delta'$ in \textbf{IPL} such that $\Delta \cup \{\beta\} \subseteq \Delta'$. By IH, $v_{\Delta'} \in  \mathcal{L}^{IPL}_n$ such that, by Definition~\ref{def:val-Delta-IPL}, $v_{\Delta'}(\neg\beta)=\bF$. Moreover, for every $\delta \in  For(\Omega)$, if $v_\Delta(\delta)=\bT$ then  $v_{\Delta'}(\delta)=\bT$, since $\Delta \subseteq \Delta'$.\\[1mm]
(2) $\alpha =\beta \to \gamma$ for some $\beta$ and $\gamma$. By Definition~\ref{def:val-Delta-IPL}, $\beta \to \gamma \not\in \Delta$ and  $\beta \not\in \Delta$. Suppose that $\Delta,\beta \vdash_{IPL} \beta \to \gamma$. By (DM),  $\Delta,\beta \vdash_{IPL} \gamma$ and so, by (DM) again,  $\Delta \vdash_{IPL} \beta \to \gamma$, a contradiction. From this, $\Delta,\beta \nvdash_{IPL} \beta \to \gamma$. Then, there exists a $(\beta \to \gamma)$-saturated set $\Delta''$ in \textbf{IPL} such that $\Delta \cup \{\beta\} \subseteq \Delta''$. By IH, $v_{\Delta''} \in  \mathcal{L}^{IPL}_n$ such that, by Definition~\ref{def:val-Delta-IPL}, $v_{\Delta''}(\beta \to \gamma)=\bF$. Moreover, for every $\delta \in  For(\Omega)$, if $v_\Delta(\delta)=\bT$ then  $v_{\Delta''}(\delta)=\bT$, since $\Delta \subseteq \Delta''$.

From the previous analysis we conclude that  $v_\Delta \in \mathcal{L}^{IPL}_{n+1}$. This shows that $v_\Delta \in \mathcal{L}^{IPL}_{n}$ for every $n \geq 0$ and so  $v_\Delta \in \mathcal{L}_{IPL}$.
\end{proof}

\begin{theorem}  [Completeness  of {\bf LI} w.r.t.  $\mathcal{R}(\mathcal{M}_{IPL})$] \label{thm:compl:IPL}
For every $\Gamma$ and $\varphi$: $\Gamma  \models_{\mathcal{R}(\mathcal{M}_{IPL})} \varphi$ implies that  $\Gamma \vdash_{IPL} \varphi$.
\end{theorem}
\begin{proof}
Suppose that $\Gamma \nvdash_{IPL} \varphi$. Then, there exists a $\varphi$-saturated set $\Delta$ in {\bf LI} such that $\Gamma \subseteq \Delta$. By Proposition~\ref{prop:val-Delta-IPL-level}, $v_\Delta$ is a  level valuation in $\mathcal{M}_{IPL}$ such that $v_\Delta(\alpha)=\bT$ for every $\alpha \in \Gamma$, but $v_\Delta(\varphi)\neq \bT$. This shows that  $\Gamma  \not\models_{ \mathcal{R}(\mathcal{M}_{IPL})} \varphi$.
\end{proof}

\begin{remark} \label{rem:level-val}
Observe that the semantical characterization of \ipl\ w.r.t. level valuation semantics is quite interesting from a theoretical point of view. However, it has little interest from a practical point of view, given that level valuations are infinitary objects, and it is far from obvious how to check validity of a formula in \ipl\ by using $\mathcal{R}(\mathcal{M}_{IPL})$. The next step is the definition of an (algorithmic) way to construct {\em intuitionistic truth-tables}, based precisely on level valuations.
\end{remark}

From soundness and completeness of \ipl\ w.r.t.  $\mathcal{R}(\mathcal{M}_{IPL})$, several important consequences can be obtained. Some of these consequence will be extremely useful to prove the soundness and completeness of the (non-deterministic) truth-tables decision method for \ipl\ to be defined in the next section.

\begin{corollary} [Finitariness for $\mathcal{R}(\mathcal{M}_{IPL})$] \label{compact_RN_IPL}  Let $\Gamma \cup\{\varphi\} \subseteq For(\Omega)$. If $\Gamma \models_{\mathcal{R}(\mathcal{M}_{IPL})} \varphi$  then $\Gamma_0 \models_{\mathcal{R}(\mathcal{M}_{IPL})} \varphi$ for some finite subset $\Gamma_0$ of $\Gamma$. 
\end{corollary}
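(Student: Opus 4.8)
The plan is to derive finitariness directly from the soundness and completeness bridge already in place, exploiting the fact that the syntactic side is a Hilbert calculus. The key observation is that we now have both directions connecting the RNmatrix consequence to provability: Theorem~\ref{thm:compl:IPL} gives $\Gamma \models_{\mathcal{R}(\mathcal{M}_{IPL})} \varphi \Rightarrow \Gamma \vdash_{IPL} \varphi$, and Theorem~\ref{thm:sound:IPL} gives the converse implication, for arbitrary (not necessarily finite) $\Gamma$. So the semantic relation $\models_{\mathcal{R}(\mathcal{M}_{IPL})}$ coincides with $\vdash_{IPL}$, and it suffices to transport the compactness of $\vdash_{IPL}$ across this equivalence.

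First I would record the (standard) finitariness of the Hilbert calculus $H_{IPL}$: a derivation of $\varphi$ from $\Gamma$ is a finite sequence of formulas, each being an axiom instance, a member of $\Gamma$, or obtained by \MP{} from earlier lines; hence any such derivation invokes only finitely many premises. Thus $\Gamma \vdash_{IPL} \varphi$ entails $\Gamma_0 \vdash_{IPL} \varphi$ for the finite set $\Gamma_0 \subseteq \Gamma$ of premises actually appearing in a fixed derivation. Then the argument chains three steps: assuming $\Gamma \models_{\mathcal{R}(\mathcal{M}_{IPL})} \varphi$, apply Theorem~\ref{thm:compl:IPL} to obtain $\Gamma \vdash_{IPL} \varphi$; apply finitariness of $H_{IPL}$ to extract a finite $\Gamma_0 \subseteq \Gamma$ with $\Gamma_0 \vdash_{IPL} \varphi$; and finally apply Theorem~\ref{thm:sound:IPL} to conclude $\Gamma_0 \models_{\mathcal{R}(\mathcal{M}_{IPL})} \varphi$, as required.

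There is essentially no obstacle here, and in particular no circularity: the soundness direction (Theorem~\ref{thm:sound:IPL}) is established independently via \textbf{Fact 1} and \textbf{Fact 2} together with induction on derivation length, while completeness (Theorem~\ref{thm:compl:IPL}) is obtained through the saturated-set construction of Proposition~\ref{prop:val-Delta-IPL-level}; neither proof relies on compactness of $\models_{\mathcal{R}(\mathcal{M}_{IPL})}$. The only point needing care is simply to invoke the completeness theorem for the possibly infinite set $\Gamma$ (which it indeed covers) before passing to the finite subset, and then to re-enter the semantic side through soundness on $\Gamma_0$. I would therefore expect the proof to be a short two- or three-line deduction citing Theorems~\ref{thm:compl:IPL} and~\ref{thm:sound:IPL} plus the finitariness of the Hilbert calculus.
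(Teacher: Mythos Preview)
Your proposal is correct and matches the paper's own proof exactly: the paper simply states that the result follows from Theorems~\ref{thm:compl:IPL} and~\ref{thm:sound:IPL} together with the finitariness of $\mathcal{H}_{IPL}$. Your observation about the absence of circularity is accurate and is implicitly relied upon by the paper as well.
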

\begin{proof} It follows from  Theorems~\ref{thm:compl:IPL},  \ref{thm:sound:IPL}  and the fact that  {\bf LI} is finitary.
\end{proof}

\begin{corollary} [Deduction metatheorem for $\mathcal{R}(\mathcal{M}_{IPL})$] \label{MTD_RN_IPL}  Let $\Gamma \cup\{\alpha,\beta\} \subseteq For(\Omega)$. Then, $\Gamma, \alpha \models_{\mathcal{R}(\mathcal{M}_{IPL})} \beta$  if, and only if, $\Gamma \models_{\mathcal{R}(\mathcal{M}_{IPL})} \alpha \to \beta$. 
\end{corollary}
\begin{proof} It follows from  Theorems~\ref{thm:compl:IPL},  \ref{thm:sound:IPL}  and the fact that  ${\bf LI}$ satisfies the deduction metatheorem (recall Remark~\ref{MTD}).
\end{proof}

\begin{proposition} \label{closed-T} Let $v \in \mathcal{L}_{IPL}$ and let $\Gamma=\{\alpha \in For(\Omega)  \mid v(\alpha)=\bT\}$. Then, $\Gamma$ is a closed theory in \ipl, i.e.: if $\Gamma \vdash_{IPL} \alpha$ then $\alpha \in \Gamma$.
\end{proposition}
\begin{proof}
Let $\Gamma=\{\alpha \in For(\Omega)  \mid v(\alpha)=\bT\}$ and suppose that $\Gamma \vdash_{IPL} \alpha$. By soundness of \ipl\ w.r.t. level valuation semantics  (Theorem~\ref{thm:sound:IPL}), $\Gamma  \models_{\mathcal{R}(\mathcal{M}_{IPL})} \alpha$. Given that $v \in \mathcal{L}_{IPL}$ such that $v\models\Gamma$, it follows that $v \models\alpha$. That is, $v(\alpha)=\bT$ and so $\alpha \in \Gamma$. 
\end{proof}

The following result will be crucial for proving soundness and completeness of intuitionistic truth-tables:

\begin{proposition} \label{prop-level-val}
 Let $v \in \mathcal{L}_{IPL}$, and $\alpha \in For(\Omega)$ such that $v(\alpha)=\bU$. Then, there exists $w \in  \mathcal{L}_{IPL}$ such that $w(\alpha)=\bF$ and $w(\gamma)=\bT$ for every formula $\gamma$ such that $v(\gamma)=\bT$.
\end{proposition}
\begin{proof}
$v \in \mathcal{L}_{IPL}$, and $\alpha \in For(\Omega)$ such that $v(\alpha)=\bU$. Since, in particular, $v \in Val^{nd}_+(\mathcal{M}_{IPL})$ (recall Definitions~\ref{def:0-level_valuations_IPL} and~\ref{def:level_valuations_IPL}), $v(\alpha)=\bU$ implies that $\alpha$ cannot be a propositional variable.
By definition of the conjunction and disjunction tables in $\mathcal{M}_{IPL}$ (Figure~\ref{table:nm_IPL}), and since $v(\alpha)=\bU$, it follows that $\alpha$ cannot be either a conjunction or a disjunction.
Let $\Gamma=\{\gamma \in For(\Omega)  \mid v(\gamma)=\bT\}$. Then, $\alpha \not\in \Gamma$.  By the observations above, there are only two cases to consider:\\[1mm]
(1) $\alpha=\neg\beta$ for some $\beta$. As noted above, $\alpha \not\in\Gamma$, i.e.,  $\neg\beta \not\in \Gamma$. Hence $\Gamma \nvdash_{IPL}\neg\beta$, given that $\Gamma$ is closed by Proposition~\ref{closed-T}. If $\beta \in \Gamma$ then $v(\beta)=\bT$ and so, by definition of $\mathcal{M}_{IPL}$, $v(\alpha)=v(\neg\beta)=\bF$, a contradiction. Then, $\beta \not\in \Gamma$. Suppose that $\Gamma,\beta \vdash_{IPL}\neg\beta$. As in the proof of Proposition~\ref{prop:val-Delta-IPL-level} we infer that  $\Gamma \vdash_{IPL}\neg\beta$. But this lead us to a contradiction since, as pointed out before,  $\Gamma \nvdash_{IPL}\neg\beta$. From this, $\Gamma,\beta \nvdash_{IPL}\neg\beta$. Thus, there exists a $\neg\beta$-saturated set $\Delta$ in \ipl\ such that $\Gamma \cup\{\beta\} \subseteq \Delta$. By  Proposition~\ref{prop:val-Delta-IPL-level}, the function $v_\Delta$ is a level valuation such that $v_\Delta(\alpha)=\bF$ (since $v_\Delta(\beta)=\bT$) and $v_\Delta(\gamma)=\bT$ for every formula $\gamma$ such that $v(\gamma)=\bT$, as required.\\[1mm]
(2) $\alpha=\beta \to \delta$ for some $\beta$ and $\delta$. Since $\Gamma,\delta \vdash_{IPL} \beta \to \delta$ and  $\Gamma \nvdash_{IPL} \beta \to \delta$ then $\Gamma \nvdash_{IPL} \delta$.  Suppose that $\Gamma,\beta \vdash_{IPL}\beta \to \delta$. By (DM), $\Gamma,\beta \vdash_{IPL} \delta$ and so, by (DM) again, $\Gamma \vdash_{IPL}\beta \to \delta$, a contradiction. From this, $\Gamma,\beta \nvdash_{IPL}\beta \to \delta$. Thus, there exists a $(\beta\to\delta)$-saturated set $\Delta$ in \ipl\ such that $\Gamma \cup\{\beta\} \subseteq \Delta$.  By reasoning as above, we conclude that $\delta \not\in \Delta$.  By  Proposition~\ref{prop:val-Delta-IPL-level}, $v_\Delta\in  \mathcal{L}_{IPL}$ is such that $v_\Delta(\alpha)=\bF$ (since $v_\Delta(\beta)=\bT$ and $v(\delta)\neq \bT$) and $v_\Delta(\gamma)=\bT$ for every formula $\gamma$ such that $v(\gamma)=\bT$, as required.\\[1mm]
\end{proof}

%\todo[inline]{proof of proposition 7.21: it is unclear why only two cases (negation and implication) are considered and not atomic formulas or the other operators.}

%\todo[inline]{proof of proposition 7.21: I don't see why we cannot have Gamma $\vdash \neg\beta$ in the last line of the page. Why can't $\neg\beta$ be in Gamma?}

%SERA QUE DA PARA PROVAR QUE TODA LEVEL VALUATION \'E UMA $v_\Delta$?

\section{A truth-tables procedure  for \ipl} \label{tables-IPL}

After obtaining a suitable RNmatrix for \ipl, the next step  is the definition of a decision procedure from this. As in the case of \sfo, this procedure is given  by means of branching truth-tables with an algorithm for deleting inadequate rows.

\subsection{Intuitionistic partial (level) valuations and intuitionistic truth-tables} \label{sect:Int_truth_tables}

\begin{definition} Consider the sets 
$$CS(\Omega)= \{\Lambda \subseteq For(\Omega) \mid  \mbox{ $\Lambda$ is non-empty and closed under subformulas}\},$$
 $$FCS(\Omega)= \{\Lambda \subseteq For(\Omega) \mid  \mbox{ $\Lambda$ is finite, non-empty and closed under subformulas}\}.$$
\end{definition}

\begin{definition}[Partial valuation  in $\mathcal{M}_{IPL}$]\label{def:partial_valuationIPL}
 Let $\Lambda \in CS(\Omega)$.  A {\em partial valuation in $\mathcal{M}_{IPL}$} is a function $\tilde v_p :\Lambda \rightarrow V_{IPL}$ such that, for every $\alpha,\beta \in \Lambda$:
 
 \begin{itemize}
 \item[--] if $\alpha \in \mathcal{P} \cap \Lambda$ then $\tilde v_p(\alpha) \in \{\bT,\bF\}$;
 \item[--] if $\neg\alpha \in \Lambda$ then $\tilde v_p(\neg \alpha) \in \neg^{IPL}(\tilde v_p(\alpha))$;
 \item[--] if  $\# \in \{\to,\vee,\land\}$ and $\alpha \# \beta\in \Lambda$ then  $\tilde v_p(\alpha \# \beta) \in \#^{IPL}(\tilde v_p(\alpha),\tilde v_p(\beta))$.
 \end{itemize}
 Let $iPV(\Lambda)$ be the set of  partial valuations in $\mathcal{M}_{IPL}$ with domain $\Lambda$.
\end{definition}

\begin{definition}[Intuitionistic partial level valuation over $\Lambda$]\label{def:partial_level_val_IPL}
    Let $\Lambda \in FCS(\Omega)$. An {\em intuitionistic partial level valuation} in $\mathcal{M}_{IPL}$ over $\Lambda$ is a  partial valuation $\tilde v_p \in iPV(\Lambda)$  satisfying the following:

    \begin{enumerate}
        \item[] $\forall \alpha \in \Lambda$ such that $\tilde v_p(\alpha) = \bU$, there exists an intuitionistic partial level valuation $\tilde w_p$ in $\mathcal{M}_{IPL}$ over $\Lambda$ such that $\tilde w_p(\alpha) = \bF$ and, $\forall \beta \in \Lambda$, $\tilde w_p(\beta) = \bT$ whenever $\tilde v_p(\beta) = \bT$.
    \end{enumerate}
The set of intuitionistic partial level valuations in $\mathcal{M}_{IPL}$ over $\Lambda$ will be denoted by $iPLV(\Lambda)$.
\end{definition}

\begin{remark} \label{rem:PLVsIPL}
Given $v,w$ and $\alpha$ let $\mathsf{P}_\Lambda(v,w,\alpha)$ iff $w(\alpha) = \bF$ and, $\forall \beta \in \Lambda$, $w(\beta) = \bT$ whenever $v(\beta) = \bT$. If $\Lambda \in FCS(\Omega)$ then\\[2mm]
$\begin{array}{lll}
iPLV(\Lambda)&=& \{\tilde v_p \in iPV(\Lambda) \mid \forall \alpha \in \Lambda \big(\tilde v_p(\alpha) = \bU \mbox{ implies that } \mathsf{P}_\Lambda(\tilde v_p,\tilde w_p,\alpha)\\[1mm] 
&&\hspace*{7mm}\mbox{ for some $\tilde w_p \in iPLV(\Lambda)$}\big)\}.
\end{array}$
\end{remark}

\begin{remark} \label{non-circularity}
As in the case of {\bf S4} discussed in Remark~\ref{rem:PLVs}, the impredicative definition of $iPLV(\Lambda)$ does not lead to circularity. Indeed, by using an algorithm (see Algorithm~1 below) similar to the Algorithm~3.4 given in~\cite[p. 15]{gratz_truth_2022} for $\mathcal{M}_{S4}$, Definition~\ref{def:partial_level_val_IPL} can be transformed into a predicative one (see Definition~\ref{def:partial'_level_val_IPL} and Proposition~\ref{prop:analiticityIPL}). This is the consequence of the fact that  $iPLV(\Lambda)$ is obtained as the 
%fixpoint of a finite step-by-step procedure, being the unique subset of $iPV(\Lambda)$ satisfying the characterization of $iPLV(\Lambda)$ given in 
output of Algorithm~1, being the unique subset of $iPV(\Lambda)$ satisfying the condition of
Remark~\ref{rem:PLVsIPL}, as it will be shown in Proposition~\ref{algorithm-iPLV}.
%It is worth observing that, as we shall see in Proposition~\ref{prop:analiticityIPL}, uniqueness of a set as in  Remark~\ref{rem:PLVsIPL} will be fundamental to our purposes.
The set  $iPLV(\Lambda)$ is precisely a (intuitionistic) truth-table, in the wider sense proposed here, based on  $\mathcal{R}(\mathcal{M}_{IPL})$. It is obtained by means of a mechanical procedure, where its elements correspond to the rows of the table, and its columns correspond to the values assigned in each row to the elements of $\Lambda$.
\end{remark}

%\todo[inline]{I find definition 8.3 and remark 8.4 unsatisfactory. I don't think cyclicity has been ruled out in a clear enough manner. One way to resolve this is to define $iPLV(\Lambda)$ as the minimal set X such that the equality X = {...} (from Remark 8.4) holds. Then we indeed do not have cycles, as some sets satisfy this, some set do not, and we take the minimal one. You would then have to prove that some sets satisfy it, and that there is a unique minimal one (or maybe you can take any minimal one). There are probably other solutions. But in any case, in its current form, I do not think the definition is accurate enough.}

%To check the validity in \ipl\ of $\alpha \in For(\Omega)$, one must construct a truth-table in $\mathcal{M}'_{S4}$ for $t(\alpha) \in For(\Sigma)$. Assume that the set of subformulas of $\alpha$ is arranged as a sequence $\alpha_1,\ldots,\alpha_n=\alpha$ such that the complexity of $\alpha_i$ is less or equal than the complexity of $\alpha_{i+1}$, for every $i$ 

\begin{notation}\label{remark:requirements} 
Given $\Lambda \in FCS(\Omega)$ with $n\geq 1$ elements, consider an arrangement $\alpha_1, \ldots,\alpha_n$ of this set such that the complexity of $\alpha_i$ is less than or equal to the complexity of  $\alpha_j$ if $i < j$ (so the atomic formulas occurring in $\Lambda$ appear as the first elements of the sequence, in arbitrary order). In this way, any subformula of  $\alpha_j$ appears as  $\alpha_i$ for some unique $i\leq j$. Suppose that the set $iPV(\Lambda)$ has $m \geq 1$ elements. Consider now an arrangement $v_p^1, \ldots, v_p^m$ of its elements, ordered by lexicographical order, by stipulating that $\bF < \bU < \bT$. In this way, $iPV(\Lambda)$ can be represented as a 2-dimensional array (i.e., an array of arrays, or a matrix) $V$ such that each row $V[i]$, for $1 \leq i \leq m$, corresponds to the $ith$ element $v_p^i$ of $iPV(\Lambda)$. In turn, $V[i][j]$ is the truth-value assigned to the formula $\alpha_j \in \Lambda$ by the partial valuation $V[i]= v_p^i$. That is, $V[i][j] = v_p^i(\alpha_j)$. This notation will be used in Algorithm~1, to be described in the sequel, also for subsets of  $iPV(\Lambda)$.
%(see Figure~\ref{fig:algorithm1}).
% Indeed, in each step of the algorithm, some rows will be possibly removed, producing a new set of partial valuations (which will be renumbered accordingly). This new set of (say, $m'\leq m$) partial valuations will be considered again as a matrix with $m'$ rows and $n$ columns (since the set $\Lambda$ remains unchanged during the execution of the algorithm).
\end{notation}

\

\noindent {\bf Algorithm~1 (in Figure~\ref{fig:algorithm1}) for obtaining $iPLV(\Lambda)$.} \\[1mm]
{\em Let $\Lambda \in FCS(\Omega)$, and consider the notation stipulated in Notation~\ref{remark:requirements}. Hence,  $\Lambda =\{\alpha_1, \ldots,\alpha_n\}$ and  $iPV(\Lambda) = \{v_p^1, \ldots, v_p^m\}$ (which can also be seen as sequences). 
Let $V:=iPLV(\Lambda)$. For each element $v_p^i$ of $V$, if $v_p^i(\alpha_j)=\bU$, determine if there is some $v_p^{i'}$ such that $\mathsf{P}_\Lambda(v_p^i,v_p^{i'},\alpha_j)$. This is represented by the function \texttt{isSupported}$(V,i,j)$ in Figure~\ref{fig:algorithm1}.
If, for some $j$, \texttt{isSupported}$(V,i,j)$ is false (meaning no valuation in $V$ supports $v_p^i$ w.r.t. $\alpha_j$) then $v_p^i$ is removed from the candidate set for 
$iPLV(\Lambda)$ and will not be considered in subsequent algorithm iterations.
%If \texttt{isSupported}$(V,i,j)$ fails for some $j$ (i.e., if $v_p^i$ is not supported by any valuation of $V$ w.r.t. some formula), then $v_p^i$ is removed from the set of candidates to be elements of  $iPLV(\Lambda)$ (in order to do this, $v_p^i$ is removed from the set of partial valuations to be considered in the next round of the algorithm).
After checking all the elements of $V$ using the elements of $V$ itself, let $IPV$ be the set of remaining valuations. If no valuation was removed, i.e., if $IPV=V$, the algorithm stops at this point and returns $IPV$ as being the set $iPLV(\Lambda)$ of partial level valuations. The set $IPV$ is the result of a {\em refinement} function applied to $V$, represented by \texttt{refine}$(V)$ in Figure~\ref{fig:algorithm1}. In case $IPV\neq V$, it means that $IPV$ is a proper subset of $V$ (i.e., some valuations were effectively removed in this round of the process). The set $IPV$ needs to be checked again with the function \texttt{refine}, to guarantee that every row is still supported.  Then, the set $V$ is updated to be equal to $IPV$  (and so $V$ is, again, the universe of partial valuations to be checked), it is renumbered as a sequence $V[1], \ldots, V[m']$ (where $m'<m$ is the cardinal or $IPV$) and then we repeat the process with the updated $V$ until no changes occur. The set $iPLV(\Lambda)$ is then defined to be the fixpoint $V=IPV$  (see Figure~\ref{fig:algorithm1}).}
% and then we compute again the refinement function \texttt{refine}$(V)$, obtaining a new set of remaining valuations, $IPV$. Once again, if  $IPV=V$ (that is, no valuations were eliminated this round) then the algorithm stops and returns $IPV$ as being the set $iPLV(\Lambda)$. Otherwise, update $V$ to be equal to $IPV$, renumber it once again, 
%and repeat the process with the updated $V$ until no changes occur. The set $iPLV(\Lambda)$ is then defined to be the fixpoint $V=IPV$  (see Figure~\ref{fig:algorithm1}).}

\

\begin{figure}[h!]
    \centering
    \includegraphics[scale=0.30]{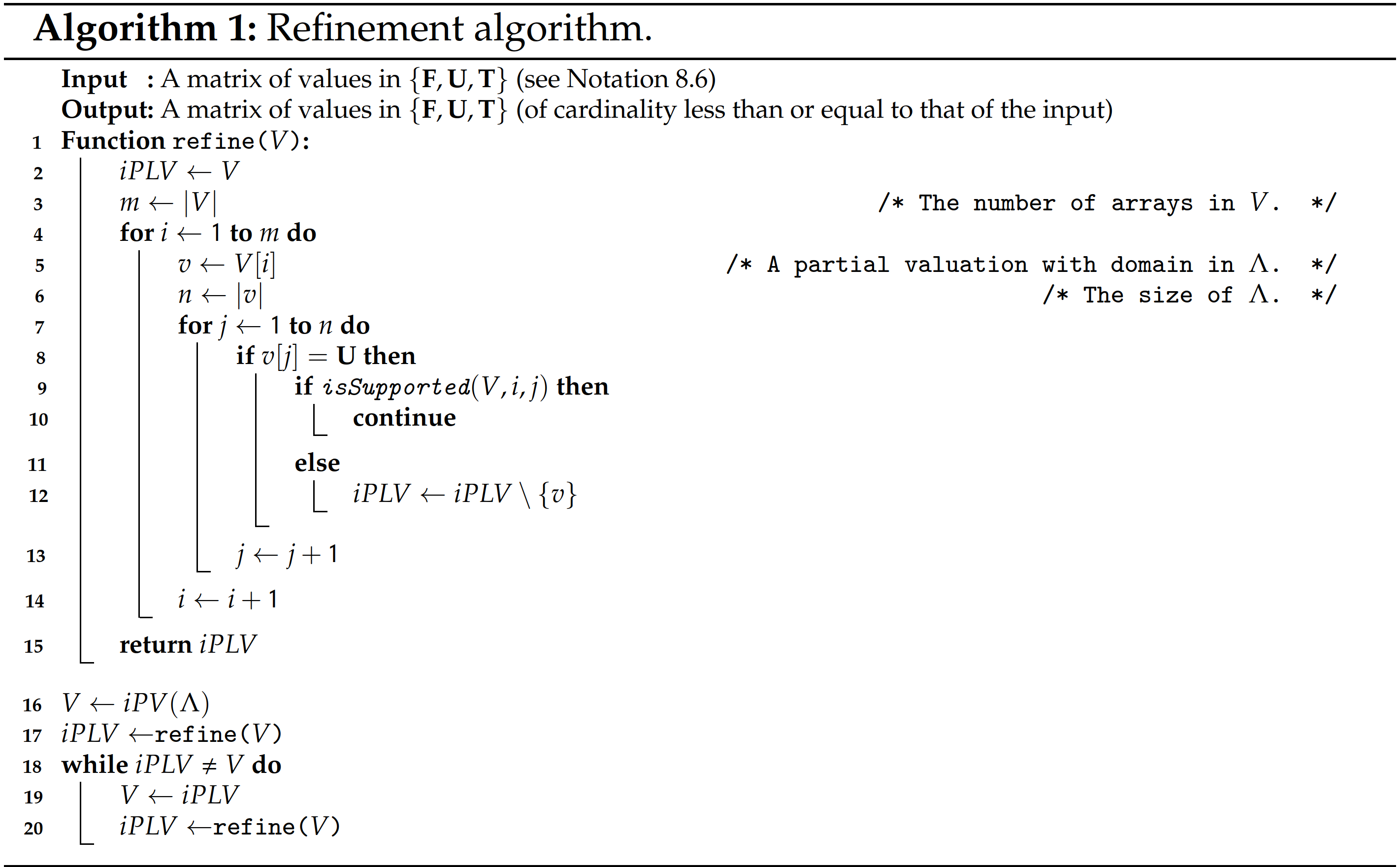}
    \caption{Algorithm for obtaining the set $iPLV(\Lambda)$.}\label{fig:algorithm1}
\end{figure}

Next result shows that, thanks to Algorithm~1, the set $iPLV(\Lambda)$ is well-defined, it is unique, and it is non-empty:

\begin{proposition} [Existence and uniqueness of intuitionistic partial level valuations] \label{algorithm-iPLV}
    Let $\Lambda \in FCS(\Omega)$. Then, there exists a unique set $iPLV(\Lambda) \subseteq iPV(\Lambda)$ satisfying the condition of Remark~\ref{rem:PLVsIPL}:\\
    
$\begin{array}{lll}
iPLV(\Lambda)&=& \{\tilde v_p \in iPV(\Lambda) \mid \forall \alpha \in \Lambda \big(\tilde v_p(\alpha) = \bU \mbox{ implies that } \mathsf{P}_\Lambda(\tilde v_p,\tilde w_p,\alpha)\\[1mm] 
&&\hspace*{7mm}\mbox{ for some $\tilde w_p \in iPLV(\Lambda)$}\big)\}.
\end{array}$\\
Moreover, the set $iPLV(\Lambda)$ is non-empty, having at least $2^k$ elements, where $k$ is the number of different propositional variables occurring in $\Lambda$.
\end{proposition}
\begin{proof}
Given $\Lambda \in FCS(\Omega)$, define the sequences $(\alpha_1, \ldots,\alpha_n)$ and $(v_p^1, \ldots, v_p^m)$ representing $\Lambda$ and $iPV(\Lambda)$ as in Notation~\ref{remark:requirements}, and then execute Algorithm~1 as described. Notice that $iPV(\Lambda)$ is a finite set,  and the algorithm operates on a subset $V \subseteq iPV(\Lambda)$ in each round. If the algorithm does not stop in a given round, it deletes some elements from $V$ and restarts the process with a new subset that is strictly smaller than the previous one.
This shows that the process will eventually terminate. There are two possible termination conditions: either every element of $V$ is eliminated, or a set $IPV = V$ is produced. In the second case, every element of $IPV$ is supported (as no removal is possible at this point); hence, $iPLV(\Lambda):=IPV$ satisfies the condition of Remark~\ref{rem:PLVsIPL}.
In turn, the first possibility (arriving to a set $IPV=\emptyset$) is impossible by the following reason: every partial valuation $v :\Lambda \rightarrow V_{IPL}$ such that $v(\alpha) \in \{\bT,\bF\}$ for every $\alpha \in \Lambda$ (i.e., every classical partial valuation) is never removed from $iPV(\Lambda)$ by means of Algorithm~1. Indeed, any classical valuation $v$ never satisfies that $v(\alpha_j)=\bU$ for some $j$, hence it will never be removed. Since there are exactly $2^k$ classical valuations in $iPV(\Lambda)$ (where $k$ is the number of different propositional variables occurring in $\Lambda$), the set $IPV$ produced by Algorithm~1 is non-empty, having at least $2^k$ elements. We have proven, therefore, the existence of a (non-empty) subset $IPV$ of $iPV(\Lambda)$ satisfying the condition of Remark~\ref{rem:PLVsIPL}.

To prove the uniqueness of such a set of partial valuations, observe that the order of the valuations $V[i]$ in the array $V$ is irrelevant for Algorithm~1. The algorithm's execution does not depend on this initial ordering. In each round, every valuation $V[i]$ is tested against every formula $\alpha_j$ in $\Lambda$. If $V[i][j]=\bU$, the function \texttt{isSupported}$(V,i,j)$ checks all other valuations $V[i']$ in $V$ to determine whether $\mathsf{P}_\Lambda(V[i],V[i'],\alpha_j)$ holds. If this condition fails for every $i'$, $V[i]$ is removed in the next step.
This procedure is clearly independent of the lexicographical order used in the formal presentation of Algorithm~1: that order was chosen solely to provide a precise description of the algorithm's steps.
In other words, regardless of the initial arrangement of $iPV(\Lambda)$, the first round will eliminate the same valuations and produce a unique set $V_1$ for the second stage. The same argument applies: any arrangement of $V_1$ yields the same subset $V_2$ after the second round. By induction, the algorithm's output \textemdash\ the fixpoint $V_s = IPV$ reached in the final step $s$ \textemdash\ must be unique.

This completes the proof.
\end{proof}

\begin{remark}
Observe that it is possible to have a set $iPLV(\Lambda)$ depending on $k$ different propositional variables $p_1,\ldots,p_k$ having {\em exactly} $2^k$ elements. It is enough considering a formula $\alpha(p_1,\ldots,p_k)$ constructed by using exclusively conjunction and disjunction. Let $\Lambda$ be the set of subformulas of $\alpha$. Clearly, the  truth-table  $iPLV(\Lambda)$ for $\alpha$  only contains classical values $\bT$ and $\bF$, coinciding so with $iPV(\Lambda)$. It has, therefore, exactly $2^k$ elements. Moreover, by expanding this table with a column for $\alpha \to \alpha$, i.e., by considering $iPV(\Lambda \cup\{\alpha \to\alpha\})$, the last column only contains values $\bU$ or $\bT$. But then the rows assigning $\bU$ to $\alpha \to \alpha$ are not supported by any other row, hence they are removed in the first round of Algorithm~1, arriving so to a set   $iPLV(\Lambda \cup\{\alpha \to\alpha\})$ with $2^k$ elements.
\end{remark}

%\begin{algorithm}[H]\scriptsize
%\DontPrintSemicolon
%\SetKwFunction{FRefine}{refine}
%\SetKwProg{Fn}{Function}{:}{}
%\SetKwInOut{Input}{Input}
%\SetKwInOut{Output}{Output}
%
%\Input{A matrix of values in $\{ \textbf{F}, \textbf{U}, \textbf{T} \}$ (see Notation~\ref{remark:requirements})}
%\Output{A matrix of values in $\{ \textbf{F}, \textbf{U}, \textbf{T} \}$ (of cardinality less than or equal to that of the input)}
%
%\Fn{\FRefine{$V$}}{
%    $iPLV \leftarrow V$\;
%    
%    $m \leftarrow |V|$\tcc*{The number of arrays in $V$.}
%    \For{$i \leftarrow 1$ \KwTo $m$}{
%        $v \leftarrow V[i]$\tcc*{A partial valuation with domain in $\Lambda$.} 
%        $n \leftarrow |v|$\tcc*{The size of $\Lambda$.}
%        \For{$j \leftarrow 1$ \KwTo $n$}{
%            \If{$v[j] = \bU$}{
%                \If{\texttt{isSupported}$(V, i, j)$}{
%                    \textbf{continue}\;
%                }
%                \Else{
%                    $iPLV \leftarrow iPLV \setminus \{v\}$\;
%                }
%            }
%            $j \leftarrow j+1$\;
%            }
%        $i \leftarrow i+1$\;
%    }
%    \Return{$iPLV$}\;
%}
%
%\vspace{0.5em}
%$V \leftarrow iPV(\Lambda)$\;
%
%$iPLV \leftarrow$\FRefine{$V$}\;
%
%\While{$iPLV \neq V$}{
%    $V \leftarrow iPLV$\;
%    
%    $iPLV \leftarrow$\FRefine{$V$}\;
%}
%\caption{Refinement algorithm.}\label{algo:refine}
%\end{algorithm}

\begin{definition} \label{def:intui:truth-tables}
An {\em intuitionistic truth-table in $\mathcal{M}_{IPL}$} is an exhaustive list of every and only  intuitionistic partial level valuations for a given finite domain closed under subformulas.
\end{definition}

\begin{remark}
    Each of these partial valuations corresponds to a row of the table.
\end{remark}

\paragraph{Example} In what follows, we present an example of a truth-table produced by Algorithm~1 which keep track of the dependencies of each partial valuation\footnote{This example was automatically generated by a Rocq module for RNmatrices written by R. Leme, see \ \url{https://github.com/renatoleme/Forest} }. The first table (Table~\ref{T1}) is the full table for $\neg\neg(p \lor \neg p)$, a valid intuitionistic formula, and was obtained by applying the Nmatrix for \ipl\ in the usual way.

\begin{figure}[h]
\centering
\begin{subfigure}[b]{.45\linewidth}
\includegraphics[width=\linewidth]{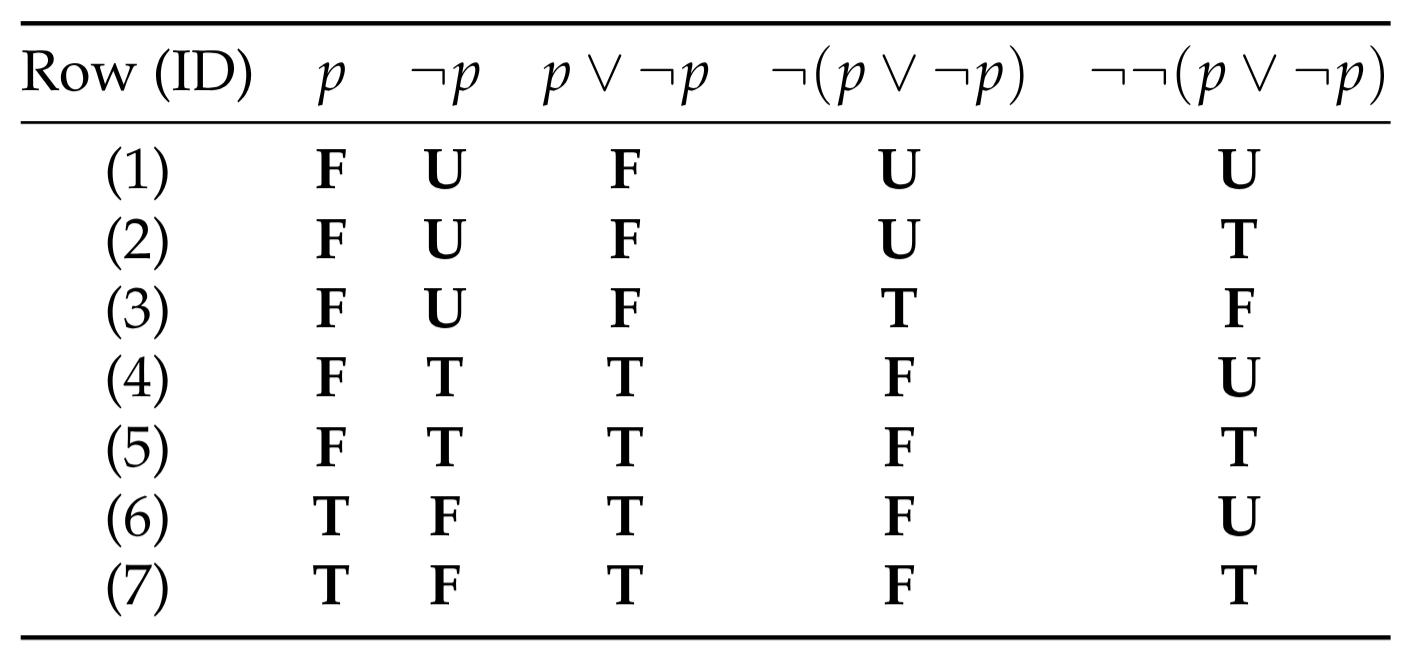}
\caption{Initial table.}\label{T1}
\end{subfigure}
\begin{subfigure}[b]{.45\linewidth}
\includegraphics[width=\linewidth]{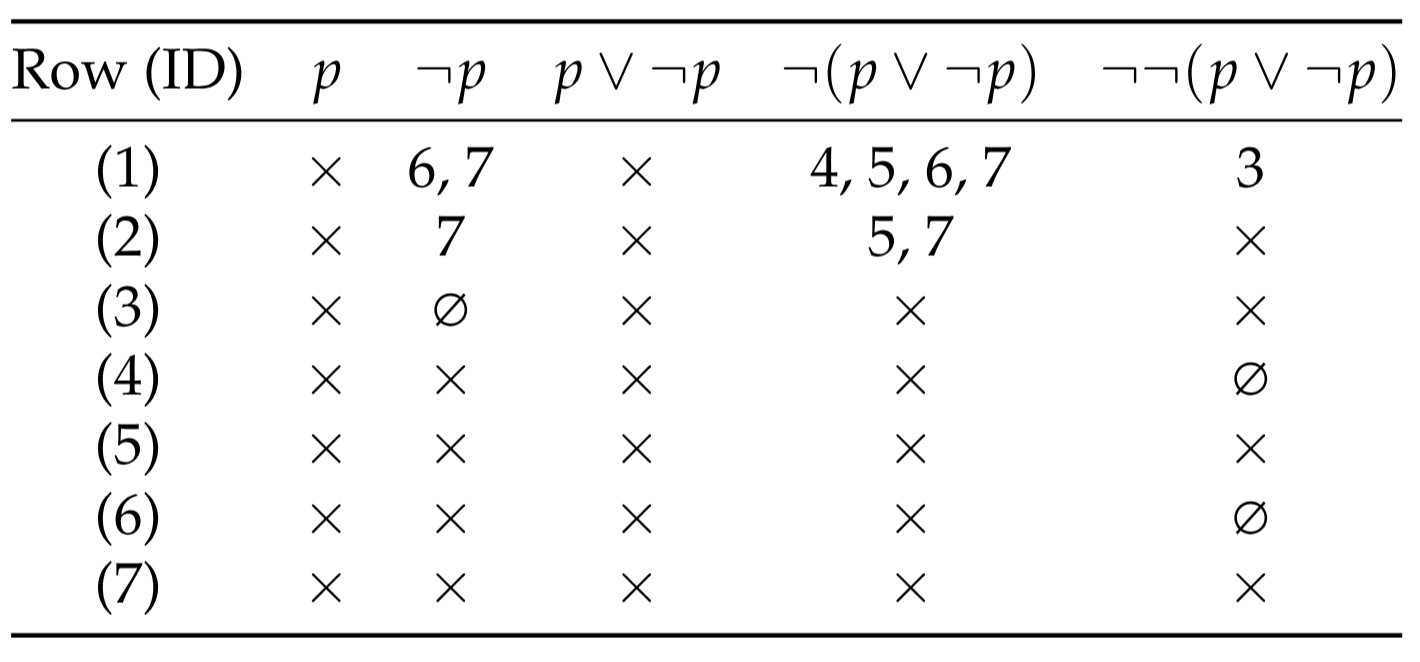}
\caption{First cycle of dependencies.}\label{T2}
\end{subfigure}
\begin{subfigure}[b]{.45\linewidth}
\includegraphics[width=\linewidth]{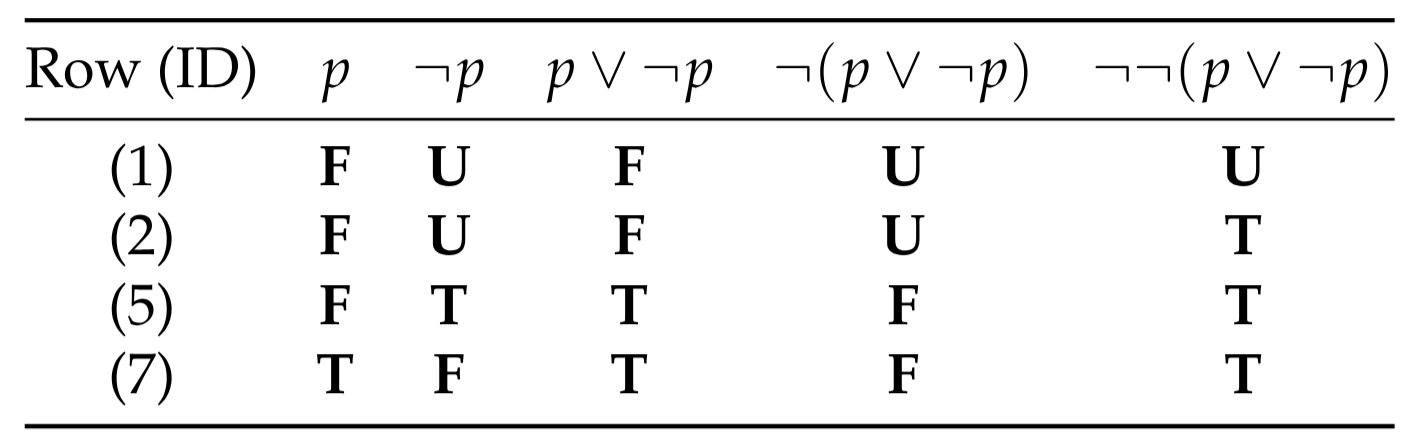}
\caption{Intermediate table.}\label{T3}
\end{subfigure}
\begin{subfigure}[b]{.45\linewidth}
\includegraphics[width=\linewidth]{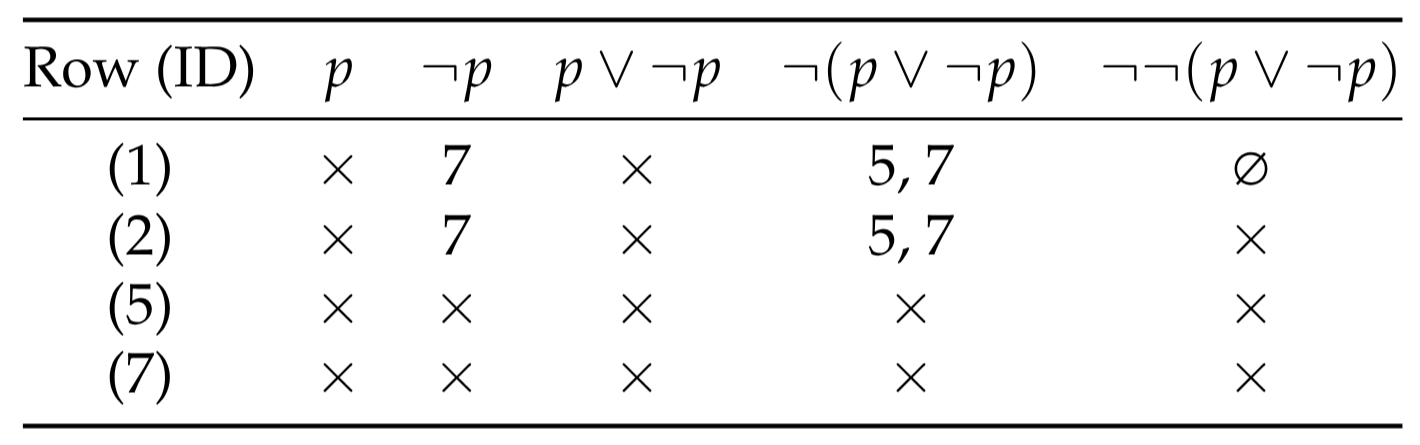}
\caption{Second cycle of dependencies.}\label{T4}
\end{subfigure}
\begin{subfigure}[b]{.45\linewidth}
\includegraphics[width=\linewidth]{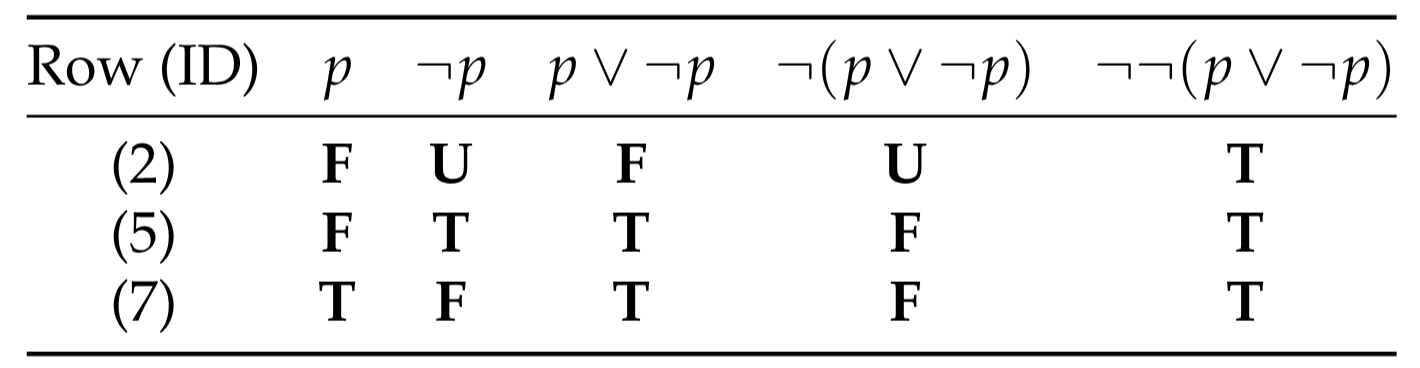}
\caption{Final table.}\label{T5}
\end{subfigure}
\begin{subfigure}[b]{.45\linewidth}
\includegraphics[width=\linewidth]{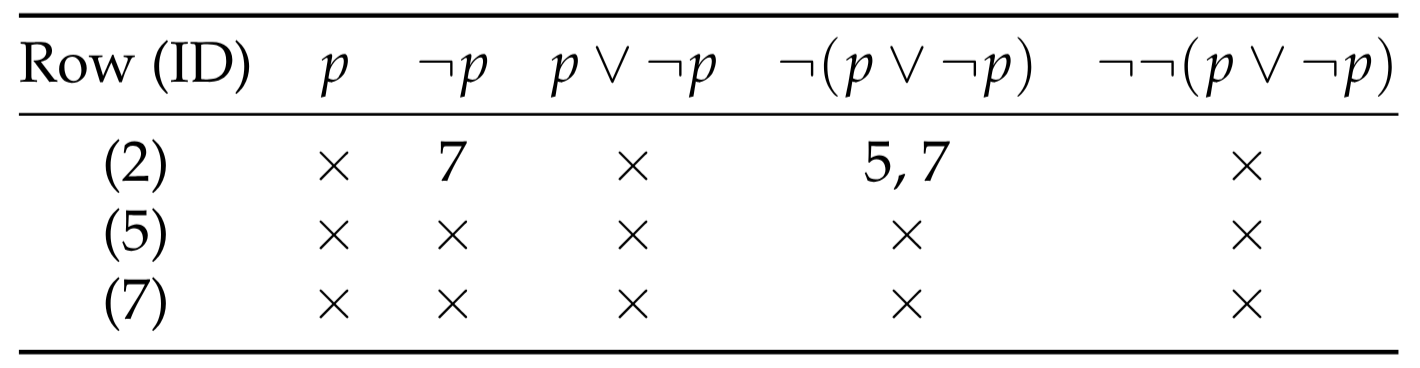}
\caption{Final dependency table.}\label{T6}
\end{subfigure}
\caption{Example of executions of refinement algorithm. Note that each partial valuation is identified by its ID, which is an index $n \in \mathbb{N}$.  The dependency tables are obtained in two steps: first, find every $v$ such that there is some $\alpha$ such that $v(\alpha) = \bU$. Then, for each of these partial valuations, find every valuation $w$ such that $w(\alpha) = \bF$ and, for every $\beta$, $w(\beta) = \bT$ whenever $v(\beta) = \bT$. }\label{example}
\end{figure}

In Figure~\ref{example}, the tables on the left correspond to an execution of Algorithm 1 (see Figure~\ref{fig:algorithm1}), while the tables on the right represent the dependency tables. Whenever $v(\alpha)=\bU$ occurs in a table on the left, the corresponding table on the right contains a set of supports (i.e., partial valuations $w$ such that $w(\alpha) = \bF$ and $w(\beta) = \bT$ whenever $v(\beta) = \bT$) in the correspondent cell. If no such support exists for that case, then the corresponding partial valuation is removed in the subsequent execution of Algorithm 1, which can be observed in the transition between successive tables on the left.

By removing rows $3$, $4$, and $6$ from Table~\ref{T1}, we obtain Table~\ref{T2}. In this new table, the row $1$, which was previously supported by the row $3$, no longer has that support available. Running the algorithm again with the updated table, we find that the row $1$ must now also be removed. This results in Table~\ref{T5}. At this point, Table~\ref{T6} contains no occurrences of $\emptyset$, which means that every instance of $\bU$ is now adequately supported. Hence, there are no more rows to remove and the algorithm is terminated. Finally, by inspecting the resulting Table~\ref{T5}, we can confirm that the formula is indeed valid.

Algorithm~1 is fundamentally the same as the one presented by Gr\"atz (\cite{gratz_truth_2022}, Algorithm $3.4$), but is applied to the new RNmatrix for \ipl. As a result, it remains within the same complexity class using exponential space. However, it has a lower upper bound because the atomic valuation considers only two truth-values: $\bF$ and $\bT$. Section~\ref{complexity} provides a further discussion on this topic.

\subsection{Soundness and completeness of the intuitionistic truth-tables}

Now, it will be shown that the generalized notion of truth-tables for \textbf{IPL} proposed in Subsection~\ref{sect:Int_truth_tables} is sound and complete. This will be established by first proving analyticity and co-analyticity with respect to level valuations in $\mathcal{M}_{IPL}$. To this end, and adapting~\cite[Definition~3.12]{gratz_truth_2022}, we introduce the intermediary notion of intuitionistic partial level valuation in $\mathcal{R}(\mathcal{M}_{IPL})$ over $\Lambda$.

\begin{definition}[Intuitionistic partial' level valuation over $\Lambda$]\label{def:partial'_level_val_IPL}
    Let $\Lambda \in CS(\Omega)$ and $v_p \in iPV(\Lambda)$. Then, $\tilde v_p$ is an {\em intuitionistic partial' level valuation} in $\mathcal{R}(\mathcal{M}_{IPL})$ over $\Lambda$  iff

    \begin{enumerate}
        \item[] $\forall \alpha \in \Lambda$ such that $\tilde v_p(\alpha) = \bU$, there exists a \emph{level valuation} $w$ in $\mathcal{L}_{IPL}$ such that $w(\alpha) = \bF$ and, $\forall \beta \in \Lambda$, $w(\beta) = \bT$ whenever $\tilde v_p(\beta) = \bT$.
    \end{enumerate}
The set of intuitionistic partial' level valuations in $\mathcal{R}(\mathcal{M}_{IPL})$ over $\Lambda$ will be denoted by $iPLV'(\Lambda)$.
\end{definition}

Definition~\ref{def:partial'_level_val_IPL} above is similar to Definition~\ref{def:partial_level_val_IPL}, except for two important aspects:  
(1) the domain $\Lambda$ of the partial valuation $\tilde v_p$ no longer needs to be finite, and  
(2) whenever there is a formula $\alpha \in \Lambda$ such that $\tilde v_p(\alpha) = \bU$, the support $w$ must be a level valuation.  

This difference is central for proving both co-analyticity and analyticity. Aspect (1) is crucial for analyticity (Lemma~\ref{analyticityIPL}), since we aim to prove that any domain closed under subformulas (possibly infinite) can be extended to a domain over the entire set of formulas. Aspect (2), in turn, is essential for co-analyticity (Lemma~\ref{co-analyticityIPL}), as it relies on the previous result obtained in Proposition~\ref{prop-level-val}, which establishes the existence of a certain level valuation under appropriate circumstances.

The notion of $iPLV'$ is well-suited for proof-theoretic purposes, but it is not sufficient for obtaining a decision procedure, since it depends on the set of level valuations. For this reason, we later prove that, for finite domains (which correspond to truth-tables), Definition~\ref{def:partial'_level_val_IPL} and Definition~\ref{def:partial_level_val_IPL} are equivalent (see Proposition~\ref{prop:analiticityIPL}). This result, a direct consequence of Proposition~\ref{algorithm-iPLV}, allows us to replace the notion of $iPLV'$ with its finitary counterpart $iPLV$, showing that, in order to decide the validity of a formula (or argument), it is sufficient to restrict the analysis to the finite universe of its subformulas (see Theorem~\ref{Theor:sound_compl_IPL0}).

%\todo[inline]{definition 8.7 uses the term "intuitionistic partial valuation" which I do not think was defined. Perhaps the word "intuitionistic" should be deleted.}

%\todo[inline]{In general, section 8.2 can be thought of the main result of this paper, or at least one of the main technical results. As such, I think it deserves more explanations: what is the difference between iPLV and iPLV', more details on Remark 8.12, and why do we need to bother with iPLV'. More intuition would be good to have here.}

\begin{remark} \label{rem:PLVsNN1}
Recall the predicate $\mathsf{P}_\Lambda(v,w,\alpha)$ introduced in Remark~\ref{rem:PLVsIPL}.  Clearly,\\[2mm]
$\begin{array}{lll}
iPLV'(\Lambda)&=& \{\tilde v_p \in iPV(\Lambda) \mid \forall \alpha \in \Lambda \big(\tilde v_p(\alpha) = \bU \mbox{ implies that } \mathsf{P}_\Lambda(\tilde v_p,w,\alpha)\\[1mm] 
&&\hspace*{7mm}\mbox{ for some $w \in \mathcal{L}_{IPL}$}\big)\}.
\end{array}$
\end{remark}

\begin{lemma} [Co-analyticity lemma for $\mathcal{R}(\mathcal{M}_{IPL})$] \label{co-analyticityIPL} Let $v \in \mathcal{L}_{IPL}$ and  $\Lambda \in CS(\Omega)$. Then, the restriction $\tilde v_p:=v_{|\Lambda}$ of $v$ to the domain $\Lambda$ belongs to $iPLV'(\Lambda)$.
\end{lemma}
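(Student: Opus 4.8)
The plan is to give a direct construction of the required witness rather than the contradiction-plus-completeness route used for S4 in Lemma~\ref{co-analyticityS4}. That route does not transfer verbatim: in $\mathcal{M}'_{S4}$ one has $\mathcal{D}=\{1,2\}=\mathcal{V}\setminus\{0\}$, so ``$w(\alpha)\neq 0$'' coincides with ``$w(\alpha)$ designated'' and the hypothesis becomes a semantic consequence; but in $\mathcal{M}_{IPL}$ we have $\mathcal{D}_{IPL}=\{\bT\}$, and ``$w(\alpha)\neq\bF$'' only yields $w(\alpha)\in\{\bU,\bT\}$, which is strictly weaker than designation. So I fix $v\in\mathcal{L}_{IPL}$, note that $v_{|\Lambda}\in iPV(\Lambda)$ (the multioperation clauses are inherited from $v$, and $v(p)\in\{\bT,\bF\}$ since $v\in\mathcal{L}_0^{IPL}=Val_+(\mathcal{M}_{IPL})$), and for each $\alpha\in\Lambda$ with $v(\alpha)=\bU$ I exhibit a level valuation $w\in\mathcal{L}_{IPL}$ with $\mathsf{P}_\Lambda^i(v_{|\Lambda},w,\alpha)$; that is, $w(\alpha)=\bF$ and $w(\beta)=\bT$ for every $\beta\in\Gamma:=\{\beta\in\Lambda\mid v(\beta)=\bT\}$.

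First I would observe that $v(\alpha)=\bU$ forces $\alpha$ to have the form $\neg\beta$ or $\beta\to\gamma$: inspecting Table~\ref{table:nm_IPL}, the outputs of $\land^{\mathcal{A}_{IPL}}$ and $\lor^{\mathcal{A}_{IPL}}$ never contain $\bU$, and propositional variables receive values in $\{\bT,\bF\}$. The crucial claim is then $\Gamma\cup\{\beta\}\nvdash_{IPL}\alpha$. Suppose otherwise. If $\alpha=\neg\beta$, then $\Gamma,\beta\vdash_{IPL}\neg\beta$, so by the deduction metatheorem $\Gamma\vdash_{IPL}\beta\to\neg\beta$; combining the theorem $\beta\to\beta$ with this through the instance $(\beta\to\beta)\to((\beta\to\neg\beta)\to\neg\beta)$ of \axn\ and \MP\ yields $\Gamma\vdash_{IPL}\neg\beta=\alpha$. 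If $\alpha=\beta\to\gamma$, then $\Gamma,\beta\vdash_{IPL}\gamma$ by \MP, hence $\Gamma\vdash_{IPL}\beta\to\gamma=\alpha$ by the deduction metatheorem. In either case $\Gamma\vdash_{IPL}\alpha$; since $v(\gamma')=\bT$ for every $\gamma'\in\Gamma$ and $\mathcal{D}_{IPL}=\{\bT\}$, soundness of $\mathcal{H}_{IPL}$ w.r.t. $\mathcal{R}(\mathcal{M}_{IPL})$ gives $v(\alpha)=\bT$, contradicting $v(\alpha)=\bU$. This is the one place where soundness is needed, exactly as Lemma~\ref{co-analyticityS4} invokes Theorem~\ref{thm:sound:S4N}, and I rely here on the soundness result established earlier in this subsection.

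Having $\Gamma\cup\{\beta\}\nvdash_{IPL}\alpha$, I would take an $\alpha$-saturated set $\Delta'$ in \textbf{IPL} with $\Gamma\cup\{\beta\}\subseteq\Delta'$ and set $w:=v_{\Delta'}$. By Proposition~\ref{prop:val-Delta-IPL-level}, $w\in\mathcal{L}_{IPL}$ and $w(\delta)=\bT$ iff $\delta\in\Delta'$; in particular $w(\beta')=\bT$ for every $\beta'\in\Gamma\subseteq\Delta'$, giving the second half of $\mathsf{P}_\Lambda^i$. Since $\Delta'$ is $\alpha$-saturated we have $\alpha\notin\Delta'$ while $\beta\in\Delta'$; as $\alpha$ is $\neg\beta$ or $\beta\to\gamma$, Definition~\ref{def:val-Delta-IPL} then gives $w(\alpha)=\bF$. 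Thus $w$ witnesses $\mathsf{P}_\Lambda^i(v_{|\Lambda},w,\alpha)$, and by Remark~\ref{rem:PLVsNN1} we conclude $v_{|\Lambda}\in iPLV'(\Lambda)$. The main obstacle, and the reason the S4 argument cannot simply be copied, is precisely the gap between ``$\neq\bF$'' and ``designated''; I sidestep it by never trying to upgrade the witness condition to a semantic consequence, and instead producing the witness concretely from a saturated set, whose membership in the full intersection $\mathcal{L}_{IPL}$ (rather than a mere finite level) is guaranteed by Proposition~\ref{prop:val-Delta-IPL-level}.
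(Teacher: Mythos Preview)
Your diagnosis is correct: the contradiction route from Lemma~\ref{co-analyticityS4} does not transfer, because the contraposition of ``no $w\in\mathcal{L}_{IPL}$ with $w(\alpha)=\bF$ satisfies $\Gamma$'' only yields $w(\alpha)\in\{\bU,\bT\}$ whenever $w$ satisfies $\Gamma$, not $\Gamma\models_{\mathcal{R}(\mathcal{M}_{IPL})}\alpha$. The paper's own proof in fact takes exactly that short route and writes ``By contraposition, this means that $\Gamma \models_{\mathcal{R}(\mathcal{M}_{IPL})} \alpha$'', then claims a contradiction from $v(\alpha)=\bU$; the step you flag is a genuine gap there.

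However, your alternative is circular as written. You invoke soundness of $\mathcal{H}_{IPL}$ w.r.t.\ $\mathcal{R}(\mathcal{M}_{IPL})$ to pass from $\Gamma\vdash_{IPL}\alpha$ to $v(\alpha)=\bT$, asserting it was ``established earlier in this subsection''. It was not: only \emph{completeness} (Theorem~\ref{thm:compl:IPL}) precedes this lemma; soundness is Theorem~\ref{thm:sound:IPL}, proved afterwards, and its Fact~1 goes through Theorem~\ref{Theor:sound_compl_IPL0}, whose converse direction explicitly uses Lemma~\ref{co-analyticityIPL}. To break the cycle you must prove the needed soundness fragment directly, e.g.\ by checking from Table~\ref{table:nm_IPL} that no $\mathcal{M}_{IPL}$-valuation assigns $\bF$ to an axiom instance (whence, by the clause defining $\mathcal{L}^{IPL}_{k+1}$, no level valuation assigns $\bU$ either) and that \MP\ preserves value $\bT$. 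With that independent soundness in hand, your saturated-set construction via Proposition~\ref{prop:val-Delta-IPL-level} does work and in fact repairs the paper's argument.
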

\begin{proof}
Let $v \in \mathcal{L}_{IPL}$ and $\tilde v_p:=v_{|\Lambda}$. Clearly, $\tilde v_p \in iPV(\Lambda)$. Let $\alpha \in \Lambda$ such that $\tilde v_p(\alpha) = \bU$. Hence, $v(\alpha)=\bU$. By Proposition~\ref{prop-level-val}, there exists $w \in \mathcal{L}_{IPL}$ such that $w(\alpha)=\bF$ and $w(\beta)=\bT$ for every $\beta$ such that $v(\beta)=\bT$. In particular, $\mathsf{P}_\Lambda(\tilde v_p,w,\alpha)$ holds. From this, $\tilde v_p \in iPLV'(\Lambda)$ by Remark~\ref{rem:PLVsNN1}.
\end{proof}

%\todo[inline]{is w x w used?}

%In what follows, the notion of complexity of a formula in $For(\Omega)$ (Definition~\ref{complex-fbf-IPL}) will be useful.

A key concern regarding truth-tables is whether a given partial valuation can always be extended to a full valuation, i.e., whether a domain closed under subformulas can be extended to a complete domain. This property, called \emph{analyticity}, is central for proving the completeness of the method (see Theorem~\ref{Theor:sound_compl_IPL0}). To establish analyticity, we start with a domain over a set closed under subformulas and conservatively add new formulas to this domain. At the limit, we obtain the function given by the union of all such extensions, which corresponds to a function defined over the set of all formulas. However, a formula can be evaluated only if all of its subformulas are already evaluated. To ensure that this condition always holds, we consider a suitable ordering on the set of formulas. The technique will be entirely analogous to the case of \textbf{S4}, recall Remark~\ref{enum:forS4}.

\begin{definition}
Given $n \geq 0$, the set of formulas over $\Omega$ with complexity less or equal than $n$ will be denoted by $For(\Omega)_n$. That is:  $For(\Omega)_n = \{\alpha \in For(\Omega) \mid \co(\alpha) \leq n\}$.
\end{definition}

\begin{remark} \label{enum:forIPL} Given the ordinal $\omega$, let $\omega^2:=\omega\cdot \omega$ be its ordinal multiplication.
Observe that it is possible to define an enumeration  $\alpha_1,\alpha_2,\ldots \alpha_m, \ldots$ (for $m \in \omega^2$)  of $For(\Omega)$ such that $\co(\alpha_i) \leq \co(\alpha_j)$ if $i \leq j$ and, for every $i$ such that $\co(\alpha_i) >0$, if $\beta$ is a strict subformula of $\alpha_i$ then $\beta=\alpha_j$ for some $j < i$. Such an enumeration of $For(\Omega)$ can be defined as follows: every formula have and index in $\omega^2$, which is a denumerable ordinal, such that all the formulas with complexity $0$ (which form a denumerable set) are placed first, with indexes in $I_0:=\omega$; after this, all the formulas with complexity $1$ (which form a denumerable set) are placed with an index in $I_1:=\omega\cdot 2\setminus \omega=\{\omega,\omega + 1, \ldots\}$; in general, the formulas with complexity $n$ (which form a denumerable set) have an index in $I_{n}:=\omega\cdot(n+1)\setminus\omega\cdot n=\{\omega\cdot n, \omega\cdot n+1, \ldots\}$.
%  For $m \in \omega^2$, $m \geq \omega$, let $F_m:=\{\alpha_i \mid i \leq m\}$. If $\co(\alpha_m)=n$ (for $n \in \omega$) then $n>0$ and  $m=\omega\cdot n+k$ for some $k \in \omega$. Observe that $For(\Omega)_n = \cup_{k \in \omega} F_{\omega\cdot n+k}$. 
\end{remark}

Based on the enumeration of $For(\Omega)$ given in the previous remark, consider the following sets:

\begin{definition}  \label{sets:enum:forIPL}
Let $\alpha_1,\alpha_2,\ldots \alpha_m, \ldots$ (for $m \in \omega^2$) be an enumeration of $For(\Omega)$ as in Remark~\ref{enum:forIPL}. For every $n,m \in \omega$ let  $\bar{\Lambda}_n:=\Lambda \cup For(\Omega)_n$;   $\bar{\Lambda}_n^0:=\bar{\Lambda}_n$; and $\bar{\Lambda}_n^{m+1}=\bar{\Lambda}_n^m \cup \{\alpha_{\omega\cdot (n+1) +m}\}$.
\end{definition}

\begin{remark} \label{enum2:forIPL} By definition, $\bar{\Lambda}_n^{m+1}=\bar{\Lambda}_n \cup \{\alpha_{\omega\cdot (n+1)}, \alpha_{\omega\cdot (n+1) +1}, \ldots, \alpha_{\omega\cdot (n+1) +m}\}$, and $\alpha_{\omega\cdot (n+1) +m}\in \Lambda$ if and only if $\bar{\Lambda}_n^{m+1}=\bar{\Lambda}_n^m$. Hence, $\bar{\Lambda}_n^{m+1}$ is obtained from $\bar{\Lambda}_n$ by adding the first $m+1$ formulas (of the given enumeration) with complexity $n+1$, and so $\bar{\Lambda}_n^{m}$ adds at most $m$ formulas to $\bar{\Lambda}_n$, for $m \in \omega$.
It is worth noting that $\bar{\Lambda}_n^m \in CS(\Omega)$ for every $n,m \in \omega$; in particular, $\bar{\Lambda}_n \in CS(\Omega)$ for every $n \in \omega$. Clearly $\bar{\Lambda}_n^m \subseteq \bar{\Lambda}_n^{m+1}$, $\bar{\Lambda}_{n+1} = \bigcup_{m \in \omega} \bar{\Lambda}_n^m$, and $For(\Omega)= \bigcup_{n \in \omega} \bar{\Lambda}_n$.
\end{remark}

\begin{lemma} [Analyticity lemma for $\mathcal{R}(\mathcal{M}_{IPL})$] \label{analyticityIPL} Let $\Lambda \in CS(\Omega)$ and  $\tilde v_p \in iPLV'(\Lambda)$. Then, there exists $v \in \mathcal{L}_{IPL}$ such that the restriction $v_{|\Lambda}$ of $v$ to the domain $\Lambda$ coincides with $\tilde v_p$.
\end{lemma}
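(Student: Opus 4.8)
The plan is to mimic the structure of the S4 analyticity argument (Lemma~\ref{analyticityS4}), adapting it to the three intuitionistic values $\bF,\bU,\bT$ and to the level-valuation condition of Definition~\ref{def:level_valuations_IPL}. I would first dispose of the case $\Lambda = For(\Omega)$, where $\tilde v_p$ is already total, by showing by induction on $n$ that $\tilde v_p \in \mathcal{L}^{IPL}_n$ for every $n$. The base case $\tilde v_p \in Val_+(\mathcal{M}_{IPL}) = \mathcal{L}^{IPL}_0$ holds because $\tilde v_p \in iPV(For(\Omega))$ assigns values in $\{\bT,\bF\}$ to the propositional variables; the inductive step uses that, whenever $\tilde v_p(\alpha) = \bU$, the defining clause of $iPLV'$ supplies a genuine level valuation $w \in \mathcal{L}_{IPL} \subseteq \mathcal{L}^{IPL}_n$ with $w(\alpha) = \bF$ preserving the $\bT$-values of $\tilde v_p$, which is exactly the witness needed to keep $\tilde v_p$ in $\mathcal{L}^{IPL}_{n+1}$. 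This yields $\tilde v_p \in \mathcal{L}_{IPL}$.

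For a proper $\Lambda \subsetneq For(\Omega)$ I would fix an enumeration of $For(\Omega)$ by complexity as in Remark~\ref{enum:forIPL} and build $v$ by recursion on the complexity $n$, maintaining the invariant $v \in iPLV'(\Lambda_n)$ with $v|_\Lambda = \tilde v_p$, where $\Lambda_n := \Lambda \cup For(\Omega)_n \in CS(\Omega)$. At the base I set $v = \tilde v_p$ on $\Lambda$ and $v(p) = \bF$ for the propositional variables outside $\Lambda$; since such $p$ receive a non-designated value, the witnesses coming from $\tilde v_p \in iPLV'(\Lambda)$ still certify $v \in iPLV'(\Lambda\cup\mathcal{P})$. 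The key simplification over S4 is that the intuitionistic multioperators are far less nondeterministic: $\land^{\mathcal{A}_{IPL}}$ and $\lor^{\mathcal{A}_{IPL}}$ are deterministic, and the only nondeterministic output that ever occurs (for $\neg$ and $\to$) is the doubleton $\{\bU,\bT\}$, there being no forced singleton $\{\bU\}$. Hence when extending $v$ to a new formula $\alpha$ of complexity $n+1$ whose immediate subformulas (of complexity $\le n$, so already in $\Lambda_n$) are valued, only three situations arise: the multioperation returns $\{\bF\}$ (set $v(\alpha)=\bF$), returns $\{\bT\}$ (set $v(\alpha)=\bT$), or returns $\{\bU,\bT\}$.

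In the doubleton case I would run the S4 trichotomy: if some witness $w^\delta$ of a formula $\delta\in\Lambda_n$ with $v(\delta)=\bU$ already satisfies $w^\delta(\alpha)=\bF$, set $v(\alpha)=\bU$ and reuse $w^\delta$; otherwise if some such $w^\delta$ has $w^\delta(\alpha)=\bU$, set $v(\alpha)=\bU$ and manufacture a witness via co-analyticity (Lemma~\ref{co-analyticityIPL}) applied to $w^\delta$; otherwise every witness of every $\bU$-formula already gives value $\bT$ on $\alpha$, and I set $v(\alpha)=\bT$. The main obstacle is the \emph{simultaneous} extension to all complexity-$(n+1)$ formulas: once several new formulas receive $\bT$, I must verify that the witnesses attached to the $\bU$-formulas (both the old ones and the freshly created ones) still preserve all the new $\bT$-values, so that $v\in iPLV'(\Lambda_{n+1})$. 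I expect to resolve this with a preservation lemma: any $w\in\mathcal{L}_{IPL}$ that witnesses some $\bU$-formula of $\Lambda_n$ automatically gives value $\bT$ on every complexity-$(n+1)$ formula that received $\bT$ --- for the forced-$\{\bT\}$ formulas this follows by inspecting the tables (the inputs forcing $\bT$ lie in $\Lambda_n$ and are preserved by $w$), and for the third trichotomy branch it is exactly that branch's defining condition. Consequently I would apply co-analyticity in the middle branch over the \emph{enlarged} domain $\Lambda_{n+1}$ rather than over $\Lambda_n\cup\{\alpha\}$, so that the manufactured witness $w''$, preserving the $\bT$-values of $w^\delta$ on $\Lambda_{n+1}$ and $w^\delta$ in turn preserving those of $v$ on $\Lambda_{n+1}$ by the preservation lemma, preserves every $\bT$-value of $v$ on $\Lambda_{n+1}$.

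Taking the union over $n$ yields a total $v$ with $v|_\Lambda = \tilde v_p$ and $v \in iPLV'(For(\Omega))$; applying the already-established total case to $v$ then gives $v \in \mathcal{L}_{IPL}$, which completes the proof. The delicate point throughout is the bookkeeping of witnesses under domain extension, which is precisely why I would isolate the preservation lemma and enlarge the domain in the co-analyticity step.
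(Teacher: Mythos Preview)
Your proposal follows the paper's proof essentially verbatim: the case split on $\Lambda = For(\Omega)$ versus $\Lambda \subsetneq For(\Omega)$, the complexity-layered extension with $\Lambda_n = \Lambda \cup For(\Omega)_n$, and the three-way trichotomy on the $\{\bU,\bT\}$ output (your cases are exactly the paper's cases (1.1)--(1.3), (2), (3)). The one genuine addition is your treatment of the simultaneous extension to $\Lambda_{n+1}$: the paper shows $v \in iPLV'(\bar\Lambda_n^m)$ for each single $\alpha_m$ and then passes to $v \in iPLV'(\bar\Lambda_{n+1})$ with the sentence ``by the very definition of intuitionistic partial' level valuations'', whereas you isolate a preservation lemma (every witness $w^\delta$ of an old $\bU$-formula automatically gives $\bT$ to every new $\bT$-formula) and apply co-analyticity over $\Lambda_{n+1}$ rather than $\Lambda_n \cup \{\alpha_m\}$ in branch (1.2). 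This extra care is warranted --- the witness $w''$ produced in (1.2) over the smaller domain $\bar\Lambda_n^m$ is not obviously a witness for any formula of $\bar\Lambda_n$, so one does need an argument that it respects the new $\bT$-values assigned via (1.3) --- and your fix handles it cleanly. The difference is one of rigor rather than strategy; the underlying construction is the same.
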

\begin{proof} It is an adaptation of the proof of the Analyticity lemma for $\mathcal{R}(\mathcal{M}'_{S4})$ (recall Lemma~\ref{analyticityS4}) to $\mathcal{R}(\mathcal{M}_{IPL})$.

Suppose first that $\Lambda=For(\Omega)$.  By induction on $n \in \omega$, it will be shown that $\tilde v_p \in \mathcal{L}^{IPL}_n$ for every $n$. By Definition~\ref{def:partial_valuationIPL} of partial valuation in $\mathcal{M}_{IPL}$, $\tilde v_p \in Val^{nd}_+(\mathcal{M}_{IPL})= {\mathcal{L}}_0^{IPL}$. Suppose that $\tilde v_p \in \mathcal{L}^{IPL}_n$ for a given $n \geq 0$, and let $\alpha \in For(\Omega)$ such that $\tilde v_p(\alpha)=\bU$. By Definition~\ref{def:partial'_level_val_IPL} of intuitionistic partial' level valuation,  there exists a level valuation $w$ in $\mathcal{L}_{IPL}$ (and so, $w \in \mathcal{L}^{IPL}_n$) such that $\mathsf{P}_\Lambda(\tilde v_p,w,\alpha)$. Since  $\Lambda=For(\Omega)$, this means that  $\tilde v_p \in \mathcal{L}^{IPL}_{n+1}$. From this, $\tilde v_p \in \mathcal{L}_{IPL}$ and the result clearly holds by taking $v:=\tilde v_p$.

Suppose now that $\Lambda \neq \emptyset $ is a proper subset of $For(\Omega)$.
Consider  an enumeration of $For(\Omega)$ as in Remark~\ref{enum:forIPL}.
The valuation $v$ will be defined by induction on the complexity $n$ of the formulas. More precisely, with terminology as in Definition~\ref{sets:enum:forIPL} and from the observations in Remark~\ref{enum2:forIPL}, an extension $v_n^m$ of $\tilde v_p$ (i.e.,  $\tilde v_p \subseteq v_n^m$) will be defined for every $(n,w) \in \omega\times \omega$ such that $v_n^m \in iPLV'(\bar{\Lambda}_n^m)$ and $v_i^j \subseteq v_n^{m}$ if $(i,j) \leq (n,m)$, where: $(i,j) \leq (n,m)$ iff $i \leq n$ or $i=n$ and $j \leq m$ (and where $\omega \times \omega$ denotes, as usual, the Cartesian product of $\omega$ with itself). \\[1mm]
{\bf Base} $n=m=0$: Observe that $\bar{\Lambda}_0=\bar{\Lambda}_0^0=\Lambda \cup \mathcal{P}$.
Define $v_0^0(\alpha)= \tilde v_p(\alpha)$ if $\alpha \in \Lambda$, and $v_0^0(p)=\bF$ if $p$ is a propositional variable which does not belong to $\Lambda$. Clearly, $\tilde v_p \subseteq v_0^0$ and $v_0^0 \in iPLV'(\bar{\Lambda}_0)=iPLV'(\bar{\Lambda}_0^0)$. Let $v_0:=v_0^0$.\\[1mm]
{\bf Inductive step}: Assume that $v_i^j(\alpha)$ was defined for every $\alpha \in \bar{\Lambda}_i^j$ such that  $v_i^j(\alpha)= \tilde v_p(\alpha)$ if $\alpha \in \Lambda$, $v_i^j \in iPLV'(\bar{\Lambda}_i^j)$, $v_i^j \subseteq v_k^r$ if $(i,j) \leq (k,r) \leq (n,m)$, for given $n,m \geq 0$ (Induction Hypothesis, IH). Now it will be shown how to extend $v_n^m$ to a function $v_n^{m+1}$ with domain $\bar{\Lambda}_n^{m+1}=\bar{\Lambda}_n^m \cup \{\alpha_{\omega\cdot (n+1) +m}\}$. That is, it will be shown how to define a value for $\alpha_{\omega\cdot (n+1) +m}$ while preserving the values assigned by $v_n^m$, in such a manner that $v_n^{m+1}\in iPLV'(\bar{\Lambda}_n^{m+1})$. To start with, define $v_n^{m+1}(\beta)=v_n^m(\beta)$ for every $\beta \in \bar{\Lambda}_n^m$.

Let $\alpha=\alpha_{\omega\cdot (n+1) +m}$. If $\alpha \in \Lambda$ then $\bar{\Lambda}_n^{m+1}=\bar{\Lambda}_n^m$. Then  $v_n^{m+1}:=v_n^m$, and the required hypothesis are obviously satisfied by $v_n^{m+1}$. Suppose now that  $\alpha \not\in \Lambda$. Observe that $v_n^{m+1}(\beta)=v_n^{m}(\beta)$ was already defined, for every strict subformula $\beta$ of $\alpha$. 
Since $v_n^m \in iPLV'(\bar{\Lambda}_n^m)$ then, for every $\delta \in \bar{\Lambda}_n^m$ such that $v_n^m(\delta)=\bU$ there exists $w   \in \mathcal{L}_{IPL}$ such that $\mathsf{P}_{\bar{\Lambda}_n^m}(v_n^m,w,\delta)$. Such a $w$ will be denoted by $w^\delta$ (observe that it is possible to have more than one $w^\delta$ for each $\delta$, and it is possible to have $w^\delta=w^\gamma$ for $\delta \neq\gamma$). Recall that $\bar{\Lambda}_n^m$ is closed under subformulas.
There are three cases to analyze:\\[1mm]
(1) $\alpha=\neg\beta$ such that $\neg^{IPL} (v_n^m(\beta))=\{\bU,\bT\}$, or $\alpha=\beta\to\gamma$ such that $\to^{IPL} (v_n^m(\beta),v_n^m(\gamma))=\{\bU,\bT\}$. There are three subcases to analyze:\\[1mm]
(1.1) There is $\delta \in \bar{\Lambda}_n^m$ with $v_n^m(\delta)=\bU$ and there exists some $w^\delta$ such that $w^\delta(\alpha)=\bF$. Define $v_n^{m+1}(\alpha)=\bU$. Hence $v_n^{m+1} \in iPLV'(\bar{\Lambda}_n^{m+1})$ such that $w^\alpha=w^\delta$.\\[1mm]
(1.2) There is $\delta \in \bar{\Lambda}_n^m $ with $v_n^m(\delta)=\bU$ and there exists some $w^\delta$ such that $w^\delta(\alpha)=\bU$. By Proposition~\ref{prop-level-val}, there exists $w'' \in  \mathcal{L}_{IPL}$ such that $\mathsf{P}_{For(\Omega)}(w^\delta,w'',\alpha)$.  Define $v_n^{m+1}(\alpha)=\bU$. Hence, $v_n^{m+1} \in iPLV'(\bar{\Lambda}_n^{m+1})$ such that $w^\alpha=w''$.\\[1mm]
(1.3) For every $\delta \in \bar{\Lambda}_n^m$ such that $v_n^m(\delta)=\bU$, $w^\delta(\alpha)=\bT$ for every $w^\delta$. Define $v_n^{m+1}(\alpha)=\bT$. Clearly,  $v_n^{m+1} \in iPLV'(\bar{\Lambda}_n^{m+1})$.\\[1mm]
(2) $\alpha=\neg\beta$ such that $\neg^{IPL} (v_n^m(\beta))=\{\bF\}$, or $\alpha=\beta\#\gamma$ such that $\#^{IPL} (v_n^m(\beta),v_n^m(\gamma))  =\{\bF\}$ (for some $\# \in \{\vee,\land, \to\}$).  Define $v_n^{m+1}(\alpha)=\bF$. It is clear that $v_n^{m+1} \in iPLV'(\bar{\Lambda}_n^{m+1})$.\\[1mm]
(3) $\alpha=\beta\#\gamma$ such that $\#^{IPL} (v_n^m(\beta),v_n^m(\gamma))  =\{\bT\}$ (for some $\# \in \{\vee,\land, \to\}$). Define $v_n^{m+1}(\alpha)=\bT$. It is clear that $v_n^{m+1} \in iPLV'(\bar{\Lambda}_n^{m+1})$.\\[1mm]

We have shown in cases (1)-(3) how to extend the domain of $v_n^m$ to the additional formula $\alpha_{\omega\cdot (n+1) +m} \in \bar{\Lambda}_n^{m+1}$, showing that the resulting function $v_n^{m+1}$ is in $iPLV'(\bar{\Lambda}_n^{m+1})$, and $\tilde v_p \subseteq v_i^j \subseteq v_k^r$, for every  $(i,j) \leq (k,r) \leq (n,m+1)$. That is, we show how to increase the superscript $m$. In order to increase the subscript $n$, let $v_{n+1}=v_{n+1}^0 := \bigcup_{m \in \omega} v_n^m$. By the procedure described above, it is immediate to see that $v_{n+1}^0$ is in $iPLV'(\bigcup_{m \in \omega} \bar{\Lambda}_n^m) =iPLV'(\bar{\Lambda}_{n+1})= iPLV'(\bar{\Lambda}_{n+1}^0)$, and $\tilde v_p \subseteq v_i^j \subseteq v_k^r$, for every  $(i,j) \leq (k,r) \leq (n+1,0)$. By repeating the process, it is possible to define $v_n^m$ for every $(n,m) \in \omega\times \omega$ with the required properties. Finally, let $v:=\bigcup_{n\in \omega} v_n$. From the manner in which the construction was made, it follows that  $v$ is in $iPLV'(\bigcup_{n \in \omega}\bar{\Lambda}_{n})= iPLV'(For(\Omega))$, and $\tilde v_p \subseteq v$. By the first part of the proof, we conclude that $v \in \mathcal{L}_{IPL}$.
\end{proof}

%%%% Obs: isto serve para nós, caso algum referee peça explicações de porquê v_n e v ainda sao partial' level valuations, quando estendemos os dominios finitos das v_n^m para um infinito pegando uma uniao
% se v(A)=T e v(B)=U (para B anterior a  A) entao a w^B que é usada como w^A em v_n^m tambem satisfaz w^B(A)=T nos estagios posteriores (v_{n+1} ou v), tanto por (1.3) ou por (3).
% com efeite, (1.3) se aplica  justamente quando w^C(A)=T para todas as w^C
% e (3) se aplica quando v(A) so pode ser T, por ser valoração na Nmatriz. Temos 3 casos:
% 1) se A=C -> D entao v(B)=v(C)=T e, por IH, w^B(C)=w^B(D)=T, logo w^B(A)=T
% 2) se A= C \vee D entao v(C)=T ou v(D)=T, logo w^B(C)=T ou w^B(D)=T, por IH, e entao w^B(A)=T
% 3) se A= C \wedge D entao v(C)=T e v(D)=T, logo w^B(C)=T e w^B(D)=T, por IH, e entao w^B(A)=T

\begin{proposition} \label{prop:analiticityIPL}
Let $\Lambda \in FCS(\Omega)$. Then $iPLV(\Lambda)=iPLV'(\Lambda)$.
\end{proposition}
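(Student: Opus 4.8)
The plan is to mirror the proof of Proposition~\ref{prop:analiticityS4}: I would show that $iPLV'(\Lambda)$ satisfies precisely the self-referential condition that, via Remark~\ref{rem:PLVsIPL}, defines $iPLV(\Lambda)$, and then appeal to the finiteness of $iPV(\Lambda)$ to identify the two sets. Concretely, writing $\mathsf{P}_\Lambda^i(v,w,\alpha)$ for the predicate of Remark~\ref{rem:PLVsIPL}, the target is to establish
\[
iPLV'(\Lambda)=\{\tilde v_p \in iPV(\Lambda) \mid \forall \alpha \in \Lambda\,(\tilde v_p(\alpha)=\bU \Rightarrow \mathsf{P}_\Lambda^i(\tilde v_p,\tilde w_p,\alpha)\text{ for some }\tilde w_p \in iPLV'(\Lambda))\},
\]
i.e. that the external witnesses (genuine level valuations $w \in \mathcal{L}_{IPL}$) demanded by Definition~\ref{def:partial'_level_val_IPL} may equivalently be taken as internal witnesses drawn from $iPLV'(\Lambda)$ itself. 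Since $iPLV(\Lambda)$ is defined by the same equation with $iPLV(\Lambda)$ in place of $iPLV'(\Lambda)$, the equality $iPLV(\Lambda)=iPLV'(\Lambda)$ will follow.

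First I would prove the inclusion from left to right. Let $\tilde v_p \in iPLV'(\Lambda)$ and suppose $\tilde v_p(\alpha)=\bU$ for some $\alpha \in \Lambda$. By Definition~\ref{def:partial'_level_val_IPL} there is $w \in \mathcal{L}_{IPL}$ with $\mathsf{P}_\Lambda^i(\tilde v_p,w,\alpha)$. Put $\tilde w_p:=w_{|\Lambda}$. By the Co-analyticity Lemma~\ref{co-analyticityIPL} we get $\tilde w_p \in iPLV'(\Lambda)$, and since $\mathsf{P}_\Lambda^i$ only inspects the values of its arguments on $\Lambda$, we still have $\mathsf{P}_\Lambda^i(\tilde v_p,\tilde w_p,\alpha)$; hence $\tilde v_p$ lies in the right-hand set. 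For the converse inclusion, let $\tilde v_p$ belong to the right-hand set and suppose $\tilde v_p(\alpha)=\bU$; there is then $\tilde w_p \in iPLV'(\Lambda)$ with $\mathsf{P}_\Lambda^i(\tilde v_p,\tilde w_p,\alpha)$. By the Analyticity Lemma~\ref{analyticityIPL} there exists $w \in \mathcal{L}_{IPL}$ with $w_{|\Lambda}=\tilde w_p$, so $\mathsf{P}_\Lambda^i(\tilde v_p,w,\alpha)$ holds with an honest level valuation, giving $\tilde v_p \in iPLV'(\Lambda)$. This proves the displayed characterization.

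Finally, both $iPLV(\Lambda)$ (by Remark~\ref{rem:PLVsIPL}) and $iPLV'(\Lambda)$ (just shown) are solutions of one and the same condition over the finite set $iPV(\Lambda)$; exactly as in the closing step of Proposition~\ref{prop:analiticityS4}, the finiteness of $iPV(\Lambda)$ (which already guarantees the non-cyclicity of the defining condition, cf.\ Remark~\ref{rem:PLVsIPL}) forces this condition to determine a single set, whence $iPLV(\Lambda)=iPLV'(\Lambda)$. The step I expect to be the main obstacle is the converse inclusion above: it is here that analyticity is genuinely needed, since one must lift an \emph{internal} witness in $iPLV'(\Lambda)$ back to a \emph{full} level valuation in $\mathcal{L}_{IPL}$ in order to make the two notions of witness interchangeable. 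I would also note that, in contrast with the S4 argument, no necessitation/$\Box$-based ingredient (the analogue of Corollary~\ref{valid_S4=2}) is required here, because the relevant grounding has already been absorbed into the intuitionistic analyticity and co-analyticity lemmas.
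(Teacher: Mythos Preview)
Your proposal is correct and follows essentially the same approach as the paper's own proof: both establish the self-referential characterization of $iPLV'(\Lambda)$ via co-analyticity (Lemma~\ref{co-analyticityIPL}) for the external-to-internal witness direction and analyticity (Lemma~\ref{analyticityIPL}) for the internal-to-external direction, then invoke finiteness of $iPV(\Lambda)$ to identify the two solutions of the defining equation. Your closing observation that no $\Box$/necessitation ingredient is needed here (unlike in the S4 case) is also accurate.
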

\begin{proof}
Let  $\tilde v_p \in iPLV'(\Lambda)$. Then,  $\tilde v_p \in iPV(\Lambda)$. Suppose that $\alpha \in \Lambda$ such that $\tilde v_p(\alpha)=\bU$.   Then, there exists  $w \in \mathcal{L}_{IPL}$ such that $\mathsf{P}_\Lambda(\tilde v_p,w,\alpha)$. Let $\tilde w_p=w_{|\Lambda}$. By Lemma~\ref{co-analyticityIPL}, $\tilde w_p \in iPLV'(\Lambda)$ such that $\mathsf{P}_\Lambda(\tilde v_p,\tilde w_p,\alpha)$. Now, suppose that there exists $\tilde w'_p\in iPLV'(\Lambda)$ such that $\mathsf{P}_\Lambda(\tilde v_p,\tilde w'_p,\alpha)$. By Lemma~\ref{analyticityIPL}, there exists $v' \in \mathcal{L}_{IPL}$ such that the restriction of $v'$ to $\Lambda$ coincides with $\tilde w'_p $. Hence,  $\mathsf{P}_\Lambda(\tilde v_p,v',\alpha)$. This shows that \\[2mm]
$\begin{array}{lll}
iPLV'(\Lambda)&=& \{\tilde v_p \in iPV(\Lambda) \mid \forall \alpha \in \Lambda \big(\tilde v_p(\alpha) = \bU \mbox{ implies that } \mathsf{P}_\Lambda(\tilde v_p,\tilde w_p,\alpha)\\[1mm] 
&&\hspace*{7mm}\mbox{ for some $\tilde w_p \in iPLV'(\Lambda)$}\big)\}.
\end{array}$
From this, it follows that  $iPLV(\Lambda)=iPLV'(\Lambda)$, by Proposition~\ref{algorithm-iPLV}.
\end{proof}

\begin{remark}
    In Proposition~\ref{algorithm-iPLV} the set $iPLV(\Lambda)$ was characterized constructively, as the output of an algorithm (Algorithm~1). In this subsection, we define an alternative subset $iPLV'(\Lambda) \subseteq iPV(\Lambda)$ in a predicative way. This subset connects the RNmatrix semantics for \ipl\ (based on level valuations) to the truth-table procedure given by $iPLV(\Lambda)$. Analyticity and co-analyticity of $iPLV'(\Lambda)$ for level valuations allow us to show that, for a finite $\Lambda$, $iPLV'(\Lambda)$ satisfies the condition of Remark~\ref{rem:PLVsIPL}. By uniqueness (Proposition~\ref{algorithm-iPLV}), $iPLV'(\Lambda)$ must coincide with  $iPLV(\Lambda)$, i.e., with the truth-table generated by $\Lambda$. But then we obtain analyticity and co-analyticity of $iPLV(\Lambda)$ for level valuations, which implies the soundness and completeness of truth-tables for \ipl, see Theorem~\ref{Theor:sound_compl_IPL0}.
\end{remark}

\begin{definition} \label{def_conseq_truth_tablesIPL}
Let $\Gamma  \cup \{\varphi\}$ be  a finite subset of  $For(\Omega)$. Let $\Lambda$ be the (finite) set of subformulas of $\Gamma \cup \{\varphi\}$.  We say that $\varphi$ is a consequence of $\Gamma$ w.r.t.  intuitionistic truth-tables, denoted by $\Gamma \models_{iPLV} \varphi$, if, for every intuitionistic partial level valuation $\tilde v_p \in iPLV(\Lambda)$, if $\tilde v_p(\beta)=\bT$ for every $\beta \in \Gamma$ then $\tilde v_p(\varphi)=\bT$. In particular, $\emptyset \models_{iPLV} \varphi$, denoted simply by $ \models_{iPLV} \varphi$, if $\tilde v_p(\varphi)=\bT$ for every $\tilde v_p \in iPLV(\Lambda)$, where $\Lambda$ is the set of subformulas of $\varphi$.
\end{definition}

Finally, we establish the correspondence between the restricted Nmatrix  for \ipl\ and the intuitionistic truth-tables:

\begin{theorem} [Soundness and completeness of intuitionistic truth-tables w.r.t.  $\mathcal{R}(\mathcal{M}_{IPL})$] \label{Theor:sound_compl_IPL0} 
Let $\Gamma  \cup \{\varphi\}$ be  a finite subset of  $For(\Omega)$. Then: $\Gamma \models_{\mathcal{R}(\mathcal{M}_{IPL})} \varphi$ if, and only if, $\Gamma \models_{iPLV} \varphi$.
\end{theorem}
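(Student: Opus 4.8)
The plan is to reduce this equivalence to the analyticity and co-analyticity results already established for $\mathcal{R}(\mathcal{M}_{IPL})$, exactly mirroring the argument used for S4 in Theorem~\ref{Theor:sound_compl_S4-2}. Fix the finite set $\Lambda \in FCS(\Omega)$ of subformulas of $\Gamma \cup \{\varphi\}$; note that $\Gamma \cup \{\varphi\} \subseteq \Lambda$, since every formula is one of its own subformulas. The bridge between the two consequence relations is the identity $iPLV(\Lambda) = iPLV'(\Lambda)$ from Proposition~\ref{prop:analiticityIPL}, which lets me move freely between partial level valuations (the finite objects computed by the truth-table algorithm) and genuine level valuations in $\mathcal{L}_{IPL}$.

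For the forward direction, I would assume $\Gamma \models_{\mathcal{R}(\mathcal{M}_{IPL})} \varphi$ and take any $\tilde v_p \in iPLV(\Lambda)$ with $\tilde v_p(\gamma) = \bT$ for all $\gamma \in \Gamma$. By Proposition~\ref{prop:analiticityIPL}, $\tilde v_p \in iPLV'(\Lambda)$, so the analyticity Lemma~\ref{analyticityIPL} yields a level valuation $v \in \mathcal{L}_{IPL}$ whose restriction to $\Lambda$ is $\tilde v_p$. Since $\Gamma \subseteq \Lambda$, we get $v(\gamma) = \tilde v_p(\gamma) = \bT$ for every $\gamma \in \Gamma$, hence $v(\varphi) = \bT$ by the hypothesis; and as $\varphi \in \Lambda$, this gives $\tilde v_p(\varphi) = v(\varphi) = \bT$, establishing $\Gamma \models_{iPLV} \varphi$.

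For the converse, I would assume $\Gamma \models_{iPLV} \varphi$ and take any $v \in \mathcal{L}_{IPL}$ with $v(\gamma) = \bT$ for all $\gamma \in \Gamma$. The co-analyticity Lemma~\ref{co-analyticityIPL} gives $\tilde v_p := v_{|\Lambda} \in iPLV'(\Lambda)$, and Proposition~\ref{prop:analiticityIPL} upgrades this to $\tilde v_p \in iPLV(\Lambda)$. Since $\Gamma \subseteq \Lambda$, we have $\tilde v_p(\gamma) = \bT$ for all $\gamma \in \Gamma$, so the truth-table hypothesis forces $\tilde v_p(\varphi) = \bT$; because $\varphi \in \Lambda$, $v(\varphi) = \tilde v_p(\varphi) = \bT$, which is precisely what $\Gamma \models_{\mathcal{R}(\mathcal{M}_{IPL})} \varphi$ requires.

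Since all the heavy lifting is already done in the analyticity and co-analyticity lemmas and in Proposition~\ref{prop:analiticityIPL}, I do not expect a genuine obstacle here. The only point deserving care is the bookkeeping: one must use the \emph{same} finite subformula-closed domain $\Lambda$ in both directions and check that both $\Gamma$ and $\varphi$ lie inside it, so that the restriction operation $v \mapsto v_{|\Lambda}$ and the extension supplied by analyticity agree with $\tilde v_p$ on exactly the formulas being tested. Unlike the S4 case, no appeal to the deduction metatheorem or to $\Box$-prefixing is needed, because the designated set is the singleton $\{\bT\}$ and the premise and conclusion conditions are stated directly in terms of the value $\bT$.
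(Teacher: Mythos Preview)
Your proposal is correct and follows essentially the same argument as the paper's own proof: fix $\Lambda$ as the subformula closure of $\Gamma\cup\{\varphi\}$, then use Proposition~\ref{prop:analiticityIPL} together with the analyticity Lemma~\ref{analyticityIPL} for the forward direction and the co-analyticity Lemma~\ref{co-analyticityIPL} plus Proposition~\ref{prop:analiticityIPL} for the converse. The only difference is expository: you make explicit the containment $\Gamma\cup\{\varphi\}\subseteq\Lambda$ and the analogy with Theorem~\ref{Theor:sound_compl_S4-2}, which the paper leaves implicit.
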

\begin{proof} Let $\Lambda$ be the set of subformulas of $\Gamma  \cup \{\varphi\}$. Observe that $\Lambda \in FCS(\Omega)$.
Suppose that $\Gamma\models_{\mathcal{R}(\mathcal{M}_{IPL})} \varphi$. 
Let $\tilde v_p \in iPLV(\Lambda)$ such that $\tilde v_p(\beta)=\bT$ for every $\beta \in \Gamma$. By Proposition~\ref{prop:analiticityIPL}, $\tilde v_p \in iPLV'(\Lambda)$. By Lemma~\ref{analyticityIPL},  there exists $v \in \mathcal{L}_{IPL}$ such that the restriction $v_{|\Lambda}$ of $v$ to $\Lambda$ coincides with $\tilde v_p$. Then, $v(\beta)=\bT$ for every $\beta \in \Gamma$ and so, by hypothesis, $v(\varphi)=\bT$. That is, $\tilde v_p(\varphi)=\bT$. This shows that $\Gamma \models_{iPLV} \varphi$.

Conversely, suppose that  $\Gamma \models_{iPLV} \varphi$. Let $v \in \mathcal{L}_{IPL}$ such that $v(\beta)=\bT$ for every $\beta \in \Gamma$. By Lemma~\ref{co-analyticityIPL}, the restriction $\tilde v_p:=v_{|\Lambda}$ of $v$ to $\Lambda$ belongs to $iPLV'(\Lambda)$. By Proposition~\ref{prop:analiticityIPL},  $\tilde v_p \in iPLV(\Lambda)$ such that  $\tilde v_p(\beta)=\bT$ for every $\beta \in \Gamma$. By hypothesis, $\tilde v_p(\varphi)=\bT$. Then, $v(\varphi)=\bT$ and so $\Gamma\models_{\mathcal{R}(\mathcal{M}_{IPL})} \varphi$.
\end{proof}

\begin{corollary} [Deduction metatheorem for intuitionistic truth-tables] \label{MTD_iPLV}  Let $\Gamma \cup\{\alpha,\beta\}$ be  a finite subset of  $For(\Omega)$. Then, $\Gamma, \alpha \models_{iPLV} \beta$  if, and only if, $\Gamma \models_{iPLV} \alpha \to \beta$. 
\end{corollary}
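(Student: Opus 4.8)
The plan is to reduce this equivalence to the deduction and detachment facts already established for the RNmatrix semantics $\mathcal{R}(\mathcal{M}_{IPL})$, using Theorem~\ref{Theor:sound_compl_IPL0} as a bridge between $\models_{iPLV}$ and $\models_{\mathcal{R}(\mathcal{M}_{IPL})}$. The crucial observation enabling this is that, since $\Gamma \cup \{\alpha,\beta\}$ is finite, every set of formulas appearing in the argument---namely $\Gamma \cup \{\alpha\} \cup \{\beta\}$ and $\Gamma \cup \{\alpha \to \beta\}$---is finite, so Theorem~\ref{Theor:sound_compl_IPL0} applies to each of the consequence statements I manipulate.

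For the forward direction, I would assume $\Gamma, \alpha \models_{iPLV} \beta$. First I apply Theorem~\ref{Theor:sound_compl_IPL0} to the finite set $\Gamma \cup \{\alpha\} \cup \{\beta\}$ to obtain $\Gamma, \alpha \models_{\mathcal{R}(\mathcal{M}_{IPL})} \beta$. Then I invoke the deduction metatheorem for the RNmatrix, Corollary~\ref{MTD_RN_IPL}, to get $\Gamma \models_{\mathcal{R}(\mathcal{M}_{IPL})} \alpha \to \beta$. Finally, applying Theorem~\ref{Theor:sound_compl_IPL0} again---this time to the finite set $\Gamma \cup \{\alpha \to \beta\}$---yields $\Gamma \models_{iPLV} \alpha \to \beta$.

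The converse direction is entirely symmetric but uses detachment in place of the deduction metatheorem. Assuming $\Gamma \models_{iPLV} \alpha \to \beta$, Theorem~\ref{Theor:sound_compl_IPL0} gives $\Gamma \models_{\mathcal{R}(\mathcal{M}_{IPL})} \alpha \to \beta$; Proposition~\ref{MP_RN_IPL} then yields $\Gamma, \alpha \models_{\mathcal{R}(\mathcal{M}_{IPL})} \beta$; and a final application of Theorem~\ref{Theor:sound_compl_IPL0} returns $\Gamma, \alpha \models_{iPLV} \beta$.

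I do not anticipate any genuine obstacle: the substance of the metatheorem was already carried out at the level of the RNmatrix, where one has access to the full class $\mathcal{L}_{IPL}$ of level valuations and can argue directly with the truth table for $\to^{\mathcal{A}_{IPL}}$, and the present statement is merely its transfer to the finitary truth-table consequence relation. The one point requiring a moment's care is the bookkeeping of the finiteness hypotheses: $\models_{iPLV}$ is defined only for finite premise sets and is computed relative to the set $\Lambda$ of subformulas of the entire sequent, which changes when $\beta$ is replaced by $\alpha \to \beta$. Routing everything through $\models_{\mathcal{R}(\mathcal{M}_{IPL})}$ via Theorem~\ref{Theor:sound_compl_IPL0} sidesteps this, since the bridge theorem already accounts for the appropriate $\Lambda$ in each instance.
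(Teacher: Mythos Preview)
Your proposal is correct and matches the paper's own proof exactly: the paper simply states that the result follows from Theorem~\ref{Theor:sound_compl_IPL0}, Proposition~\ref{MP_RN_IPL}, and Corollary~\ref{MTD_RN_IPL}, which is precisely the bridge-and-transfer argument you spelled out. Your additional remark about the finiteness bookkeeping (that the underlying $\Lambda$ changes between the two sequents, and that routing through $\models_{\mathcal{R}(\mathcal{M}_{IPL})}$ handles this) is a helpful clarification the paper leaves implicit.
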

\begin{proof} It follows from  Theorem~\ref{Theor:sound_compl_IPL0} and Corollary~\ref{MTD_RN_IPL}.
\end{proof}

It is worth noting that the latter result is useful to reduce the complexity of the truth-tables: instead of generating the (reduced) truth-table for a formula $\delta=\beta_1 \to (\beta_2 \to(\ldots \to(\beta_n \to \varphi) \ldots))$ to be tested as a tautology, it suffices generating the (reduced) truth-table for $\{\beta_1,\ldots,\beta_n, \varphi\}$ and then determine if, in every row such that $\beta_1,\ldots,\beta_n$ receive simultaneously the value $\bT$, $\varphi$ also receives the value $\bT$. This reduces the number of rows required to test $\delta$.

Finally, we arrive to the main result of the paper, recalling that {\bf LI} is a standard sequent calculus for \ipl:

\begin{theorem} [Soundness and completeness of  {\bf LI} w.r.t. intuitionistic truth-tables] \label{Theor:sound_compl_IPL1} Let $\Gamma \cup\{\varphi\}$ be  a finite subset of  $For(\Omega)$. Then: $\Gamma\vdash_{IPL} \varphi$ if, and only if, $\Gamma \models_{iPLV}\varphi$.
\end{theorem}
\begin{proof} It follows from  Theorems~\ref{thm:sound:IPL}, \ref{thm:compl:IPL}  and~\ref{Theor:sound_compl_IPL0}.
\end{proof}

The latter result shows that the procedure of constructing (non-deterministic) truth-tables in Definition~\ref{def:intui:truth-tables} determines a decision method for propositional intuitionistic logic \ipl.

\section{Some considerations on complexity}\label{complexity}

Unlike deterministic truth-tables, the size of non-deterministic truth-tables cannot be determined solely by the number of atoms. This is because, in principle, each subformula may (or may not) create new rows. Thus, we must also consider the number and shape of every evaluated subformula.

A workaround for this problem is to establish size limits using boundaries. More formally, we can define a general upper bound for non-deterministic matrices by identifying their branching factor as follows.

\begin{definition}[Branching factor]
 Let $\mathcal{M} = \langle \mathcal{V}, \mathcal{D}, \mathcal{O} \rangle$ be some Nmatrix over a signature $\Theta$. The branching factor of $\mathcal{M}$ is the lowest $\kappa$ such that, for every n-ary $\# \in \Theta_n$ and every $a_1, \ldots, a_n \in \mathcal{V}$, $| \mathcal{O}(\#) (a_1, \ldots, a_n) | \leq \kappa$.
  \end{definition}

\begin{definition}[Upper bound]\label{def:ub}
 Let $sub(\alpha)$ be the set of all subformulas of $\alpha$, $atoms(\alpha)$ the set of subformulas with complexity equal to $0$, $\kappa \geq 1$ the branching factor, and $\theta \leq | \mathcal{V} |$ the number of truth-values used for atomic valuations. Then, the maximum number of rows of the initial matrix for $\alpha$ in $\mathcal{R}(\mathcal{M})$ is given by

    \begin{align*}
 ub_{\mathcal{M}} (\alpha) = \frac{\kappa^{|sub(\alpha)|}}{\kappa^{|atoms(\alpha)|}} \times \theta^{|atoms(\alpha)|}
      \end{align*}
\end{definition}

\begin{example}
In $\mathcal{R}(\mathcal{M}_{IPL})$, $\kappa = 2$ and $\theta = 2$ . Hence,
\[
 ub_{\mathcal{M}_{IPL}} = 2^{|sub(\alpha)|}
\]
\end{example}

\emph{A fortiori}, we can infer that the upper bound for the Nmatrix is also an upper bound for the restricted Nmatrix. Therefore, we can expect no more than $2^{|sub(\alpha)|}$ rows in a truth-table created for $\alpha$ with $\mathcal{R}(\mathcal{M}_{IPL})$. The truth-table may be shorter than $2^{|sub(\alpha)|}$, but not less than $2^{|atoms(\alpha)|}$, given that there are at least $2^{|atoms(\alpha)|}$ (trivially) valid partial valuations in the truth-table.

This exponential growth in the number of subformulas may be problematic; a formula with only one variable can be arbitrarily large and produce immense tables. However, truth-tables are meaningful. By constructing a truth-table, one generates significant information about a given proposition or argument. A future work direction would be improving the average-case runtime by refining the algorithm with state-of-the-art strategies.

\section{Related works}\label{Sect:Related}

The idea of using finite non-deterministic semantics for characterizing intuitionistic logic and modal logic is not new. This section will mention some approaches to this topic proposed in the literature. We do not intend to present a detailed description and comparison between all these approaches here since they are beyond the scope of the present paper.\\[1mm]
{\bf(1)}  Andrea Loparic already obtained in 1977, announced in~\cite{loparic:77} and finally published in~\cite{loparic2010}, a semantical characterization of  Johannson-Kolmogorov minimal logic and intuitionistic propositional logic \ipl\ in terms of a $2$-valued, non-truth-functional valuation semantics which induces a decision procedure for these logics. Similar to Gr\"atz's method (and to the adaptation to  \ipl\ proposed here), Loparic's approach is based on a criterion for distinguishing between what she called {\em semivaluations} and {\em valuations}, the latter being the useful ones. This would correspond to level valuations, since such valuations are associated with $\varphi$-saturated sets, and they are the ones that characterize the logic. The associated algorithm generates a non-deterministic truth-table for a given formula, as in the cases of \sfo\ and \ipl\ presented in this paper. 
Then, using formal criteria based on implicative formulas, it deletes certain rows. Thus, a formula is a tautology of the given logic if and only if it receives the value 1 in all remaining rows.\\[1mm] %\todo[inline]{"sound and completeness" -- soundness.}
{\bf(2)}  J. Kearns proposed in~\cite{kearns:78} a semantics for (first-order) intuitionistic logic based on the notion of {\em justifications}.
A 3-valued Nmatrix, whose domain is not formed by truth-values but by values of justification ($+$ for `justified', $o$ for `weakly unjustified', and $-$ for `strongly unjustifiable'), is considered for interpreting the connectives of \ipl. A sentence is weakly unjustified if it is not justified by the currently justified sentences but might become justified later. In turn, strongly unjustifiable sentences are those already ruled out by the currently justified sentences.
Given two valuations $v$ and $v'$ over the Nmatrix, $v \preceq v'$ ($v$ {\em is contained in} $v'$) if $v'$ preserves the values $+$ (and so, the values $-$) assigned by $v$ to each formula. Hence, if $v'(\alpha)=o$ then $v(\alpha)=o$, for any $\alpha$. The intuitive idea is that $v'$ is a `possible future' of $v$ (hence $v'$ has more `knowledge'  than $v$, provided that the weak epistemic status $o$ may be corrected to the stronger  $+$ or $-$ in the future).
Kearns constructed a hierarchy of level valuations (as he would later do for modal logics) starting with level 0, which is formed by all valuations on the Nmatrix.
The $(n+1)$th-level valuations are the $n$th-level valuations satisfying certain restrictions defined in terms of the sets $X_n^v=\{v' \mid v'$ is an $n$th-level valuation such that $v \preceq v' \}$, for any $n$th-level valuation $v$. A basic feature is that, if $v$ is an $(n + 1)$th-level valuation and $v'(\alpha) \neq +$ for every $v' \in X_n^v$ then $v(\alpha)=-$. That is, if $\alpha$ is not justified in any possible future of $v$, then $\alpha$ must be strongly unjustifiable at present $v$.
In a second paper~\cite{kearns:IPL:81} on the subject, published two years after~\cite{kearns:78} and seven months after~\cite{kearns_modal_1981}, Kearns improved the interpretation of the justification semantics and simplified its formal presentation by using trees instead of levels.
The {\em contained} relation $\preceq$ is crucial to define the trees whose nodes are valuations. It should be stressed that Kearns's proposal constitutes a 3-valued RNmatrix semantics for \ipl, although it differs significantly from our approach. In particular, Kearns's RNmatrix does not seem to induce a decision procedure for \ipl.\\[1mm]
{\bf(3)} Concerning modal logics, in~\cite{lah:zoh:22}, right after the publication of~\cite{gratz_truth_2022}, O. Lahav and Y. Zohar proposed a decision procedure for systems {\bf K} and {\bf KT} based on 4- and 3-valued RNmatrices, respectively, by using a decision procedure different (but closely related) to Gr\"atz's approach. Indeed, for ${\bf L} \in \{ {\bf K}, {\bf KT}\}$ they considered a sequent calculus $\mathcal{G}_{\bf L}$ and sets $\mathbb{V}_{\bf L}^{\mathcal{F},n}$ of $n$th-level valuations relative to a set $\mathcal{F}$ of formulas (the domain of such valuations) and $n$ is the maximum number of applications of the necessitation rule validated by such valuations. Given a sequent $\Gamma \Rightarrow \varphi$, the expression $\vdash_{\mathcal{G}_{\bf L}}^{\mathcal{F},n}\Gamma  \Rightarrow \varphi$ denotes that there exists a derivation in $\mathcal{G}_{\bf L}$ of  $\Gamma \Rightarrow \varphi$ with a derivation containing exclusively formulas in $\mathcal{F}$ and such that the number of applications of the inference rule of necessitation $\Delta \Rightarrow \psi / \Box\Delta \Rightarrow \Box\psi$ in any of the branches of the derivation is limited by $n$. They proved that  $\vdash_{\mathcal{G}_{\bf L}}^{\mathcal{F},n}\Gamma  \Rightarrow \varphi$ if and only if $\varphi$ is a consequence of $\Gamma$ w.r.t. $n$th-level valuations in $\mathbb{V}_{\bf L}^{\mathcal{F},n}$. Moreover, they obtained a decision procedure for {\bf K} by considering, for each sequent $\Gamma \Rightarrow \varphi$ to be tested, the set of level valuations  $\mathbb{V}_{\bf L}^{\mathcal{F},n}$, where  $\mathcal{F}$ is the set of subformulas of $\Gamma \cup \{\varphi\}$ and $n=4^{|\mathcal{F}|}$. For {\bf KT}, it is enough considering $n=3^{|\mathcal{F}|}$, since the RNmatrix for {\bf KT} is 3-valued. Moreover, that RNmatrix coincides with the one proposed in~\cite{gratz_truth_2022} for {\bf KT}. It is worth noting that~\cite{lah:zoh:22} considers the reduced version of the underlying Nmatrix, expanded to the full signature. This produces the Nmatrix  $\mathcal{M}'_{S4}$ for \sfo\ of Table~\ref{table:nm_S4*-R}, but where $\Box'(2)=\{1,2\}$. 
Despite these similarities, the decision procedure in~\cite{lah:zoh:22} (as mentioned earlier) differs from that in~\cite{gratz_truth_2022}. It consists of generating the set of $n$th-level valuations over $\mathcal{F}$ for appropriate $n$ and $\mathcal{F}$.\\[1mm]
{\bf(4)} More recently, A. Solares-Rojas proposed a 3-valued Nmatrix semantics for \ipl\ based on intuitionistic Kripke models and the novel notion of {\em depth-bounded approximations} (see~\cite[Chapter~5]{Solares-Rojas}). Within this semantic approach to \ipl, a 3-valued Nmatrix \textemdash\ whose truth-values are intuitionistic truth (1), falsity (0), and indeterminacy ($u$) \textemdash\  gives the meaning of the connectives in terms of information that a given agent holds. 
The valuations over the Nmatrix aim to model actual information states, ordered by a certain {\em refinement relation} $\sqsubseteq_3$.
This relation models how the agent's knowledge evolves when new information from reliable external sources is acquired. 
The semantics for \ipl\ is defined via intuitionistic Kripke frames expanded with a family of valuations indexed by the worlds, where the refinement order corresponds to the frame's accessibility relation. Some similarities between this approach and Kearns's (mentioned in point {\bf (2)}) may be observed.
Conceptually, both approaches aim to model the evolution of states of knowledge: the information state of an agent in~\cite{Solares-Rojas}, and the knowledge of an idealized community in~\cite{kearns:78,kearns:IPL:81}. Regarding technical aspects, by identifying $1$, $0$, and $u$ with $+$, $-$, and $o$ respectively, the Nmatrix in~\cite[Chapter~5]{Solares-Rojas} coincides with that in~\cite{kearns:78}, with one exception: while $o \vee o = \{o, +\}$ in Kearns's work, $u \vee u = \{u\}$ (i.e., $o \vee o = \{o\}$) in Solares-Rojas's approach. Additionally, the refinement relation $\sqsubseteq_3$ between valuations coincides with the {\em contained} relation $\preceq$ introduced in~\cite{kearns:78}.
The relationship between both approaches (in particular, the connections with the improved version~\cite{kearns:IPL:81}, which is closer to Kripke and Beth's semantics) deserves to be investigated. Regarding the connections with our approach, although the 3-valued Nmatrices proposed in~\cite[Chapter~5]{Solares-Rojas} and the one we gave in Definition~\ref{def:M_IPL} are quite different, it is not immediate whether the semantics proposed by Solares-Rojas could be seen as given by an RNmatrix. This is the subject of future investigation, as well as the relationship between our approach and the semantics of justification introduced by Kearns.

\section{Concluding remarks} \label{final_remarks}

In this paper, we proposed a new decision procedure for intuitionistic propositional logic (\ipl) based on a 3-valued restricted Nmatrix semantics. This procedure was obtained by abstracting the composition of Gr\"atz's decision procedure for \sfo~\cite{gratz_truth_2022} and G\"odel's faithful translation from \ipl\ to \sfo. A 3-valued Nmatrix with an intuitive interpretation in epistemic terms was proposed for \ipl. By adapting Gr\"atz's algorithm for \sfo, a suitable notion of level valuations was obtained, which allowed us to define a straightforward algorithm for deleting spurious rows in the non-deterministic truth-tables generated by the Nmatrix for any formula. The soundness and completeness proofs for the RNmatrix and the resulting truth-table algorithm for \ipl\ were presented in full technical detail.

It is worth noting that the Nmatrix for \ipl\ introduced in Definition~\ref{def:M_IPL} has a clear and intuitive interpretation in terms of knowledge (see comments after Definition~\ref{def:M_IPL}) and constructibility. Indeed, recall that the valuation associated with a $\varphi$-saturated set $\Delta$ in \ipl\ (given in Definition~\ref{def:val-Delta-IPL}) acts as a 3-valued characteristic function of $\Delta$, and note the observations in Remark~\ref{ren:canval}.

By considering finite and decidable RNmatrix semantics as a generalization of standard finite-valued deterministic truth-tables, the results presented here provide a way to overcome the limitations obtained by G\"odel and Dugundji for \ipl\ and \sfo, respectively, as mentioned in Section~\ref{sect:Intro}.

As a topic of future research, we intend to explore the meaning of the partial valuations corresponding to the rows of the non-deterministic truth-tables in \ipl\ and \sfo. In particular, we are studying the possibility of generating finite Kripke models for \ipl, {\bf KT} and \sfo\ from the obtained truth-tables. This could be connected with {\em finite approximability} (\cite[p. 49 and Ch.~11]{cha:Zakh:97}) or with {\em mini-canonical models} (\cite[Ch.~8]{hughes_cresswell_1996}). Finally, the extension of this methodology to other modal logics, as well as Prawitz's ecumenical systems (see~\cite{prawitz}), will also be investigated. Some steps in this direction were taken in~\cite{lem:ola:pim:con:25}.

\

\noindent{\large \bf Acknowledgments:}
The first and second authors thank the support of the S\~ao Paulo Research Foundation (FAPESP, Brazil) through the PhD scholarship grant \#21/01025-3 and the Thematic Project RatioLog \#20/16353-3. The second author also acknowledges support from the National Council for Scientific and Technological Development (CNPq, Brazil), through the individual research grant \#309830/2023-0. The third author thanks Funda\c c\~ao Carlos Chagas de Amparo \`a Pesquisa do Estado do Rio de Janeiro (FAPERJ, Brazil), through the research grant \#E-26.210.971/2019.

\bibliography{main}{}

\newpage

\section*{Appendix A}

The following truth-tables show that each axiom of the usual Hilbert calculus for \textbf{IPL} is valid in the intuitionistic truth-tables. These tables were automatically generated by a Rocq module for RNmatrices written by R. Leme, see\\

\url{https://github.com/renatoleme/Forest}

\

{
\scriptsize

\begin{multicols}{2}
    \setbox\ltmcbox\vbox\bgroup
    \makeatletter\col@number\@ne
\begin{longtable}[]{@{}cccc@{}}
\toprule
\(q\) & \(p\) & \(q \to p\) & \(p \to (q \to p)\)\tabularnewline
\midrule
\endhead
\(\textbf{F}\) & \(\textbf{F}\) & \(\textbf{U}\) &
\(\textbf{T}\)\tabularnewline
\(\textbf{F}\) & \(\textbf{F}\) & \(\textbf{T}\) &
\(\textbf{T}\)\tabularnewline
\(\textbf{F}\) & \(\textbf{T}\) & \(\textbf{T}\) &
\(\textbf{T}\)\tabularnewline
\(\textbf{T}\) & \(\textbf{F}\) & \(\textbf{F}\) &
\(\textbf{T}\)\tabularnewline
\(\textbf{T}\) & \(\textbf{T}\) & \(\textbf{T}\) &
\(\textbf{T}\)\tabularnewline
\bottomrule
%\caption{Table for $p \to (q \to p)$.}
\end{longtable}
\unskip
\unpenalty
\unpenalty\egroup
\unvbox\ltmcbox

\setbox\ltmcbox\vbox\bgroup
\makeatletter\col@number\@ne
\begin{longtable}[]{@{}ccccc@{}}
    \toprule
    \(p\) & \(q\) & \(p \land q\) & \(q \to p \land q\) &
    \(p \to (q \to p \land q)\)\tabularnewline
    \midrule
    \endhead
    \(\textbf{F}\) & \(\textbf{F}\) & \(\textbf{F}\) & \(\textbf{U}\) &
    \(\textbf{T}\)\tabularnewline
    \(\textbf{F}\) & \(\textbf{F}\) & \(\textbf{F}\) & \(\textbf{T}\) &
    \(\textbf{T}\)\tabularnewline
    \(\textbf{F}\) & \(\textbf{T}\) & \(\textbf{F}\) & \(\textbf{F}\) &
    \(\textbf{T}\)\tabularnewline
    \(\textbf{T}\) & \(\textbf{F}\) & \(\textbf{F}\) & \(\textbf{T}\) &
    \(\textbf{T}\)\tabularnewline
    \(\textbf{T}\) & \(\textbf{T}\) & \(\textbf{T}\) & \(\textbf{T}\) &
    \(\textbf{T}\)\tabularnewline
    \bottomrule
    %\caption{Table for $p \to (q \to (p \land q))$.}
    \end{longtable}
    \unskip
    \unpenalty
    \unpenalty\egroup
    \unvbox\ltmcbox
\end{multicols}

\begin{longtable}[]{@{}ccccccccc@{}}
\toprule
\(q\) & \(p\) & \(r\) & \(q \to r\) & \(p \to q\)& \(p \to r\) & \(p \to (q \to r)\) & 
\((p \to q) \to (p \to r)\) & \((p \to (q \to r)) \to ((p \to q) \to (p \to r))\)
\tabularnewline
\midrule
\endhead
\(\textbf{F}\) & \(\textbf{F}\) & \(\textbf{F}\) & \(\textbf{U}\) & \(\textbf{U}\) & \(\textbf{U}\) & \(\textbf{U}\) & \(\textbf{U}\) & \(\textbf{T}\) \tabularnewline
\(\textbf{F}\) & \(\textbf{F}\) & \(\textbf{F}\) & \(\textbf{U}\) & \(\textbf{U}\) & \(\textbf{U}\) & \(\textbf{T}\) & \(\textbf{T}\) & \(\textbf{T}\) \tabularnewline
\(\textbf{F}\) & \(\textbf{F}\) & \(\textbf{F}\) & \(\textbf{U}\) & \(\textbf{U}\) & \(\textbf{T}\) & \(\textbf{T}\) & \(\textbf{T}\) & \(\textbf{T}\) \tabularnewline
\(\textbf{F}\) & \(\textbf{F}\) & \(\textbf{F}\) & \(\textbf{U}\) & \(\textbf{T}\) & \(\textbf{U}\) & \(\textbf{U}\) & \(\textbf{F}\) & \(\textbf{T}\) \tabularnewline
\(\textbf{F}\) & \(\textbf{F}\) & \(\textbf{F}\) & \(\textbf{U}\) & \(\textbf{T}\) & \(\textbf{T}\) & \(\textbf{T}\) & \(\textbf{T}\) & \(\textbf{T}\) \tabularnewline
\(\textbf{F}\) & \(\textbf{F}\) & \(\textbf{F}\) & \(\textbf{T}\) & \(\textbf{U}\) & \(\textbf{U}\) & \(\textbf{T}\) & \(\textbf{T}\) & \(\textbf{T}\) \tabularnewline
\(\textbf{F}\) & \(\textbf{F}\) & \(\textbf{F}\) & \(\textbf{T}\) & \(\textbf{U}\) & \(\textbf{T}\) & \(\textbf{T}\) & \(\textbf{T}\) & \(\textbf{T}\) \tabularnewline
\(\textbf{F}\) & \(\textbf{F}\) & \(\textbf{F}\) & \(\textbf{T}\) & \(\textbf{T}\) & \(\textbf{T}\) & \(\textbf{T}\) & \(\textbf{T}\) & \(\textbf{T}\) \tabularnewline
\(\textbf{F}\) & \(\textbf{F}\) & \(\textbf{T}\) & \(\textbf{T}\) & \(\textbf{U}\) & \(\textbf{T}\) & \(\textbf{T}\) & \(\textbf{T}\) & \(\textbf{T}\) \tabularnewline
\(\textbf{F}\) & \(\textbf{F}\) & \(\textbf{T}\) & \(\textbf{T}\) & \(\textbf{T}\) & \(\textbf{T}\) & \(\textbf{T}\) & \(\textbf{T}\) & \(\textbf{T}\) \tabularnewline
\(\textbf{F}\) & \(\textbf{T}\) & \(\textbf{F}\) & \(\textbf{U}\) & \(\textbf{F}\) & \(\textbf{F}\) & \(\textbf{F}\) & \(\textbf{U}\) & \(\textbf{T}\) \tabularnewline
\(\textbf{F}\) & \(\textbf{T}\) & \(\textbf{F}\) & \(\textbf{T}\) & \(\textbf{F}\) & \(\textbf{F}\) & \(\textbf{T}\) & \(\textbf{T}\) & \(\textbf{T}\) \tabularnewline
\(\textbf{F}\) & \(\textbf{T}\) & \(\textbf{T}\) & \(\textbf{T}\) & \(\textbf{F}\) & \(\textbf{T}\) & \(\textbf{T}\) & \(\textbf{T}\) & \(\textbf{T}\) \tabularnewline
\(\textbf{T}\) & \(\textbf{F}\) & \(\textbf{F}\) & \(\textbf{F}\) & \(\textbf{T}\) & \(\textbf{U}\) & \(\textbf{U}\) & \(\textbf{F}\) & \(\textbf{T}\) \tabularnewline
\(\textbf{T}\) & \(\textbf{F}\) & \(\textbf{F}\) & \(\textbf{F}\) & \(\textbf{T}\) & \(\textbf{T}\) & \(\textbf{T}\) & \(\textbf{T}\) & \(\textbf{T}\) \tabularnewline
\(\textbf{T}\) & \(\textbf{F}\) & \(\textbf{T}\) & \(\textbf{T}\) & \(\textbf{T}\) & \(\textbf{T}\) & \(\textbf{T}\) & \(\textbf{T}\) & \(\textbf{T}\) \tabularnewline
\(\textbf{T}\) & \(\textbf{T}\) & \(\textbf{F}\) & \(\textbf{F}\) & \(\textbf{T}\) & \(\textbf{F}\) & \(\textbf{F}\) & \(\textbf{F}\) & \(\textbf{T}\) \tabularnewline
\(\textbf{T}\) & \(\textbf{T}\) & \(\textbf{T}\) & \(\textbf{T}\) & \(\textbf{T}\) & \(\textbf{T}\) & \(\textbf{T}\) & \(\textbf{T}\) & \(\textbf{T}\) \tabularnewline
\bottomrule
%\caption{Table for $(p \to (q \to r)) \to ((p \to q) \to (p \to r))$.}
\end{longtable}

\begin{multicols}{2}
    \setbox\ltmcbox\vbox\bgroup
    \makeatletter\col@number\@ne
\begin{longtable}[]{@{}ccccc@{}}
\toprule
\(q\) & \(p\) & \(p \land q\) & \(p \land q \to p\) & \(p \land q \to q\)\tabularnewline
\midrule
\endhead
\(\textbf{F}\) & \(\textbf{F}\) & \(\textbf{F}\) &
\(\textbf{T}\) & \(\textbf{T}\)\tabularnewline
\(\textbf{F}\) & \(\textbf{T}\) & \(\textbf{F}\) &
\(\textbf{T}\) & \(\textbf{T}\)\tabularnewline
\(\textbf{T}\) & \(\textbf{F}\) & \(\textbf{F}\) &
\(\textbf{T}\) & \(\textbf{T}\)\tabularnewline
\(\textbf{T}\) & \(\textbf{T}\) & \(\textbf{T}\) &
\(\textbf{T}\) & \(\textbf{T}\)\tabularnewline
\bottomrule
%\caption{Table for $(p \land q) \to p$ and $(p \land q) \to q$.}
\end{longtable}
\unskip
\unpenalty
\unpenalty\egroup
\unvbox\ltmcbox
\setbox\ltmcbox\vbox\bgroup
\makeatletter\col@number\@ne
\begin{longtable}[]{@{}ccccc@{}}
\toprule
\(p\) & \(q\) & \(p \lor q\) & \(p \to p \lor q\) & \(q \to p \lor q\)\tabularnewline
\midrule
\endhead
\(\textbf{F}\) & \(\textbf{F}\) & \(\textbf{F}\) &
\(\textbf{T}\) & \(\textbf{T}\)\tabularnewline
\(\textbf{F}\) & \(\textbf{T}\) & \(\textbf{T}\) &
\(\textbf{T}\) & \(\textbf{T}\)\tabularnewline
\(\textbf{T}\) & \(\textbf{F}\) & \(\textbf{T}\) &
\(\textbf{T}\) & \(\textbf{T}\)\tabularnewline
\(\textbf{T}\) & \(\textbf{T}\) & \(\textbf{T}\) &
\(\textbf{T}\) & \(\textbf{T}\)\tabularnewline
\bottomrule
%\caption{Table for $p \to (p \lor q)$ and $q \to (p \lor q)$.}
\end{longtable}
\unskip
\unpenalty
\unpenalty\egroup
\unvbox\ltmcbox
\end{multicols}

\newpage

\begin{longtable}[]{@{}ccccccccc@{}}
\toprule \(p\) & \(q\) & \(r\) & \(p \to r\) & \(q \to r\) & \(p \lor q\) & \(p \lor q \to r\) & \((q \to r) \to (p \lor q \to r)\) & \((p \to r) \to ((q \to r) \to	((p \lor q) \to r))\) \tabularnewline
\midrule
\endhead
 \(\textbf{F}\)  &  \(\textbf{F}\)  &  \(\textbf{F}\)  &  \(\textbf{U}\)  &  \(\textbf{U}\)  &  \(\textbf{F}\)  &  \(\textbf{U}\)  &  \(\textbf{U}\)  &  \(\textbf{T}\) \tabularnewline
 \(\textbf{F}\)  &  \(\textbf{F}\)  &  \(\textbf{F}\)  &  \(\textbf{U}\)  &  \(\textbf{U}\)  &  \(\textbf{F}\)  &  \(\textbf{U}\)  &  \(\textbf{T}\)  &  \(\textbf{T}\) \tabularnewline
 \(\textbf{F}\)  &  \(\textbf{F}\)  &  \(\textbf{F}\)  &  \(\textbf{U}\)  &  \(\textbf{T}\)  &  \(\textbf{F}\)  &  \(\textbf{U}\)  &  \(\textbf{F}\)  &  \(\textbf{T}\) \tabularnewline
 \(\textbf{F}\)  &  \(\textbf{F}\)  &  \(\textbf{F}\)  &  \(\textbf{T}\)  &  \(\textbf{U}\)  &  \(\textbf{F}\)  &  \(\textbf{U}\)  &  \(\textbf{T}\)  &  \(\textbf{T}\) \tabularnewline
 \(\textbf{F}\)  &  \(\textbf{F}\)  &  \(\textbf{F}\)  &  \(\textbf{T}\)  &  \(\textbf{T}\)  &  \(\textbf{F}\)  &  \(\textbf{T}\)  &  \(\textbf{T}\)  &  \(\textbf{T}\) \tabularnewline
 \(\textbf{F}\)  &  \(\textbf{F}\)  &  \(\textbf{T}\)  &  \(\textbf{T}\)  &  \(\textbf{T}\)  &  \(\textbf{F}\)  &  \(\textbf{T}\)  &  \(\textbf{T}\)  &  \(\textbf{T}\) \tabularnewline
 \(\textbf{F}\)  &  \(\textbf{T}\)  &  \(\textbf{F}\)  &  \(\textbf{U}\)  &  \(\textbf{F}\)  &  \(\textbf{T}\)  &  \(\textbf{F}\)  &  \(\textbf{T}\)  &  \(\textbf{T}\) \tabularnewline
 \(\textbf{F}\)  &  \(\textbf{T}\)  &  \(\textbf{F}\)  &  \(\textbf{T}\)  &  \(\textbf{F}\)  &  \(\textbf{T}\)  &  \(\textbf{F}\)  &  \(\textbf{T}\)  &  \(\textbf{T}\) \tabularnewline
 \(\textbf{F}\)  &  \(\textbf{T}\)  &  \(\textbf{T}\)  &  \(\textbf{T}\)  &  \(\textbf{T}\)  &  \(\textbf{T}\)  &  \(\textbf{T}\)  &  \(\textbf{T}\)  &  \(\textbf{T}\) \tabularnewline
 \(\textbf{T}\)  &  \(\textbf{F}\)  &  \(\textbf{F}\)  &  \(\textbf{F}\)  &  \(\textbf{U}\)  &  \(\textbf{T}\)  &  \(\textbf{F}\)  &  \(\textbf{U}\)  &  \(\textbf{T}\) \tabularnewline
 \(\textbf{T}\)  &  \(\textbf{F}\)  &  \(\textbf{F}\)  &  \(\textbf{F}\)  &  \(\textbf{U}\)  &  \(\textbf{T}\)  &  \(\textbf{F}\)  &  \(\textbf{T}\)  &  \(\textbf{T}\) \tabularnewline
 \(\textbf{T}\)  &  \(\textbf{F}\)  &  \(\textbf{F}\)  &  \(\textbf{F}\)  &  \(\textbf{T}\)  &  \(\textbf{T}\)  &  \(\textbf{F}\)  &  \(\textbf{F}\)  &  \(\textbf{T}\) \tabularnewline
 \(\textbf{T}\)  &  \(\textbf{F}\)  &  \(\textbf{T}\)  &  \(\textbf{T}\)  &  \(\textbf{T}\)  &  \(\textbf{T}\)  &  \(\textbf{T}\)  &  \(\textbf{T}\)  &  \(\textbf{T}\) \tabularnewline
 \(\textbf{T}\)  &  \(\textbf{T}\)  &  \(\textbf{F}\)  &  \(\textbf{F}\)  &  \(\textbf{F}\)  &  \(\textbf{T}\)  &  \(\textbf{F}\)  &  \(\textbf{T}\)  &  \(\textbf{T}\) \tabularnewline
 \(\textbf{T}\)  &  \(\textbf{T}\)  &  \(\textbf{T}\)  &  \(\textbf{T}\)  &  \(\textbf{T}\)  &  \(\textbf{T}\)  &  \(\textbf{T}\)  &  \(\textbf{T}\)  &  \(\textbf{T}\) \tabularnewline
\bottomrule
%\caption{Table for $(p \to r) \to ((q \to r) \to	((p \lor q) \to r))$.}
\end{longtable}

\begin{longtable}[]{@{}cccccccc@{}}
\toprule
\(p\)& \(q\)& \(q \to p\)& \(\neg p\)& \(\neg q\)& \(q \to \neg p\)& \((q \to \neg p) \to \neg q\)& \((q \to p) \to ((q \to \neg p) \to \neg q)\)\tabularnewline
\midrule
\endhead
 \(\textbf{F}\)  &  \(\textbf{F}\)  &  \(\textbf{U}\)  &  \(\textbf{U}\)  &  \(\textbf{U}\)  &  \(\textbf{U}\)  &  \(\textbf{U}\)  &  \(\textbf{T}\) \tabularnewline
 \(\textbf{F}\)  &  \(\textbf{F}\)  &  \(\textbf{U}\)  &  \(\textbf{U}\)  &  \(\textbf{U}\)  &  \(\textbf{U}\)  &  \(\textbf{T}\)  &  \(\textbf{T}\) \tabularnewline
 \(\textbf{F}\)  &  \(\textbf{F}\)  &  \(\textbf{U}\)  &  \(\textbf{U}\)  &  \(\textbf{U}\)  &  \(\textbf{T}\)  &  \(\textbf{F}\)  &  \(\textbf{T}\) \tabularnewline
 \(\textbf{F}\)  &  \(\textbf{F}\)  &  \(\textbf{U}\)  &  \(\textbf{T}\)  &  \(\textbf{U}\)  &  \(\textbf{T}\)  &  \(\textbf{F}\)  &  \(\textbf{T}\) \tabularnewline
 \(\textbf{F}\)  &  \(\textbf{F}\)  &  \(\textbf{T}\)  &  \(\textbf{U}\)  &  \(\textbf{U}\)  &  \(\textbf{U}\)  &  \(\textbf{T}\)  &  \(\textbf{T}\) \tabularnewline
 \(\textbf{F}\)  &  \(\textbf{F}\)  &  \(\textbf{T}\)  &  \(\textbf{U}\)  &  \(\textbf{T}\)  &  \(\textbf{T}\)  &  \(\textbf{T}\)  &  \(\textbf{T}\) \tabularnewline
 \(\textbf{F}\)  &  \(\textbf{F}\)  &  \(\textbf{T}\)  &  \(\textbf{T}\)  &  \(\textbf{T}\)  &  \(\textbf{T}\)  &  \(\textbf{T}\)  &  \(\textbf{T}\) \tabularnewline
 \(\textbf{F}\)  &  \(\textbf{T}\)  &  \(\textbf{F}\)  &  \(\textbf{U}\)  &  \(\textbf{F}\)  &  \(\textbf{F}\)  &  \(\textbf{U}\)  &  \(\textbf{T}\) \tabularnewline
 \(\textbf{F}\)  &  \(\textbf{T}\)  &  \(\textbf{F}\)  &  \(\textbf{U}\)  &  \(\textbf{F}\)  &  \(\textbf{F}\)  &  \(\textbf{T}\)  &  \(\textbf{T}\) \tabularnewline
 \(\textbf{F}\)  &  \(\textbf{T}\)  &  \(\textbf{F}\)  &  \(\textbf{T}\)  &  \(\textbf{F}\)  &  \(\textbf{T}\)  &  \(\textbf{F}\)  &  \(\textbf{T}\) \tabularnewline
 \(\textbf{T}\)  &  \(\textbf{F}\)  &  \(\textbf{T}\)  &  \(\textbf{F}\)  &  \(\textbf{U}\)  &  \(\textbf{U}\)  &  \(\textbf{T}\)  &  \(\textbf{T}\) \tabularnewline
 \(\textbf{T}\)  &  \(\textbf{F}\)  &  \(\textbf{T}\)  &  \(\textbf{F}\)  &  \(\textbf{T}\)  &  \(\textbf{T}\)  &  \(\textbf{T}\)  &  \(\textbf{T}\) \tabularnewline
 \(\textbf{T}\)  &  \(\textbf{T}\)  &  \(\textbf{T}\)  &  \(\textbf{F}\)  &  \(\textbf{F}\)  &  \(\textbf{F}\)  &  \(\textbf{T}\)  &  \(\textbf{T}\) \tabularnewline
\bottomrule
%\caption{Table for $(q \to p) \to ((q \to \neg p) \to \neg q)$.}
\end{longtable}

\begin{longtable}[]{@{}ccccc@{}}
\toprule
\(p\) & \(q\) & \(\neg p\) & \(\neg p \to q\) &
\(p \to (\neg p \to q)\)\tabularnewline
\midrule
\endhead
\(\textbf{F}\) & \(\textbf{F}\) & \(\textbf{U}\) & \(\textbf{U}\) &
\(\textbf{T}\)\tabularnewline
\(\textbf{F}\) & \(\textbf{F}\) & \(\textbf{U}\) & \(\textbf{T}\) &
\(\textbf{T}\)\tabularnewline
\(\textbf{F}\) & \(\textbf{F}\) & \(\textbf{T}\) & \(\textbf{F}\) &
\(\textbf{T}\)\tabularnewline
\(\textbf{F}\) & \(\textbf{T}\) & \(\textbf{U}\) & \(\textbf{T}\) &
\(\textbf{T}\)\tabularnewline
\(\textbf{F}\) & \(\textbf{T}\) & \(\textbf{T}\) & \(\textbf{T}\) &
\(\textbf{T}\)\tabularnewline
\(\textbf{T}\) & \(\textbf{F}\) & \(\textbf{F}\) & \(\textbf{T}\) &
\(\textbf{T}\)\tabularnewline
\(\textbf{T}\) & \(\textbf{T}\) & \(\textbf{F}\) & \(\textbf{T}\) &
\(\textbf{T}\)\tabularnewline
\bottomrule
%\caption{Table for $p \to (\neg p \to q)$.}
\end{longtable}

}

\end{document}